\documentclass[11pt]{amsart} 
\usepackage[english]{babel}
\usepackage{mathtools, float}
\usepackage{amssymb,amscd, graphics,color,latexsym,cancel, booktabs}   
\usepackage[linktoc=all, pagebackref, hyperindex]{hyperref}%
\hypersetup{colorlinks,  citecolor=blue,  filecolor=blue,  linkcolor=blue,  urlcolor=black}
\usepackage{multirow, graphicx}
\usepackage{comment}

\usepackage{tikz-cd} 
\usepackage{extpfeil}
\usepackage{tikz}
\usetikzlibrary{matrix,calc}
\usepackage{mathrsfs}
\usepackage[all]{xy}
\usepackage{amsfonts}
\usepackage[normalem]{ulem}
\usepackage{euscript}
\usepackage{amsmath}
\usepackage{xcolor}
\usepackage{ifpdf}
\usepackage{cleveref}
\LARGE\textwidth=6in
\newcommand\restr[2]{{
  \left.\kern-\nulldelimiterspace 
  #1 
  \vphantom{\big|} 
  \right|_{#2} 
}}

\newcommand{\lar}{\longrightarrow}

\newtheorem{Theorem}{Theorem}[section]
\newtheorem{Lemma}[Theorem]{Lemma}

\newtheorem{Proposition}[Theorem]{Proposition}

\theoremstyle{definition}
\newtheorem{Remark}[Theorem]{Remark}
\newtheorem{Example}[Theorem]{Example}

\newtheorem{Definition}[Theorem]{Definition}
\newtheorem{Question}[Theorem]{Question}
\def\sqr#1#2{{\vcenter{\hrule height.#2pt
			\hbox{\vrule width.#2pt height#1pt \kern#1pt
				\vrule width.#2pt}
			\hrule height.#2pt}}}
\def\phi{\varphi}

\def\VaVa{{\mathcal V}\kern-5pt {\mathcal V}}
\def\gr#1#2{{\rm gr}\, _{#1}(#2)}
\def\gr{{\rm gr}\,}
\def\hht{{\rm ht}\,}
\def\depth{{\rm depth}\,}

\def\Min{{\rm Min}\,}
\def\codim{{\rm codim}\,}

\def\ker{{\rm ker}\,}
\def\grade{{\rm grade}\,}
\def\rk{\rm rank}

\def\syz{\mbox{\rm Syz}}
\def\cok{\mbox{\rm coker}}

\def\Ext#1#2#3#4{{\rm Ext}\,^{#1}_{#2}({#3},{#4})}

\def\supp#1{{\rm Supp}\, (#1)}

\def\ini{\mbox{\rm in}}

\def\der#1{\mbox{\rm der}_k(#1)}

\def\cl#1{{\mathcal #1}}

\def\phi{\varphi}

\def\hht{{\rm ht}\,}
\def\grade{{\rm grade}\,}

\def\fm{{\mathfrak m}}

\def\NN{\mathbb N}

\def\fp{{\mathfrak p}}

\def\fm{{\mathfrak m}}

\def\NN{\mathbb N}

\def\cl#1{{\cal #1}}
\def\rk{\rm rank}

\newcommand{\excise}[1]{}

\def\NZQ{\mathbb}               
\def\NN{{\NZQ N}}

\def\PP{{\NZQ P}}

\def\G{{\mathcal G}}

\def\R{{\mathcal R}}

\def\opn#1#2{\def#1{\operatorname{#2}}} 
\opn\chara{char} \opn\length{\lambda} \opn\pd{pd} \opn\rk{rk}
\opn\projdim{proj\,dim} \opn\injdim{inj\,dim} \opn\rank{rank}
\opn\depth{depth} \opn\grade{grade} \opn\height{height}
\opn\embdim{emb\,dim} \opn\codim{codim}

\opn\Tr{Tr} \opn\bigrank{big\,rank}
\opn\superheight{superheight}\opn\lcm{lcm}
\opn\trdeg{tr\,deg}
	\opn\reg{reg} \opn\lreg{lreg} \opn\ini{in} \opn\lpd{lpd}
	\opn\size{size} \opn\sdepth{sdepth}
	\opn\link{link}\opn\fdepth{fdepth}\opn\lex{lex}
	\opn\tr{tr}
	\opn\type{type}
    \opn\Num{Num}   \opn\Str{Str}
	\opn\div{div} \opn\Div{Div} \opn\cl{cl} \opn\Cl{Cl}
	\opn\Spec{Spec} \opn\Supp{Supp} \opn\supp{supp} \opn\Sing{Sing}
	\opn\Ass{Ass} \opn\Min{Min}\opn\Mon{Mon}
	\opn\Ho{H}  \opn\indeg{indeg} \opn\Hs{HS} \opn\Eig{E}
	\opn\Ann{Ann} \opn\Rad{Rad} \opn\Soc{Soc}  \opn\der{Der}
	\opn\Bour{Bour} \opn\proj{Proj}
    \opn\Sym{Sym}
    \opn\Lie{Lie}
	\opn\Im{Im} \opn\Ker{Ker} \opn\Coker{Coker} \opn\Am{Am}
	\opn\Hom{Hom} \opn\Tor{Tor} \opn\Ext{Ext} \opn\End{End}
	\opn\Aut{Aut} \opn\id{id}
	
	\opn\nat{nat}
	\opn\pff{pf}
	\opn\Pf{Pf} \opn\GL{GL} \opn\SL{SL} \opn\mod{mod} \opn\ord{ord}
	\opn\Gin{Gin} \opn\Hilb{HP}\opn\sort{sort}
	\opn\PF{PF}\opn\Ap{Ap}\opn\sat{sat}
    \opn\PGL{PGL}
    \opn\Coh{Coh}
    \opn\op{op}
	\opn\aff{aff} \opn
	\con{conv} \opn\relint{relint} \opn\st{st}
	\opn\lk{lk} \opn\cn{cn} \opn\core{core} \opn\vol{vol}  \opn\inp{inp} \opn\nilpot{nilpot}
	\opn\link{link} \opn\star{star}\opn\lex{lex}\opn\set{set}
	\opn\width{wd}
	\opn\Fr{F}
	\opn\QF{QF}
	\opn\G{G}
    \opn\Id{Id}
	\opn\type{type}\opn\res{res}
	\opn\log{Log}

	\opn\gr{gr}   
	
    \opn\sm{{\rm sm}}
	\def\pot#1#2{#1[\kern-0.28ex[#2]\kern-0.28ex]}

	\opn\dirlim{\underrightarrow{\lim}}
	\opn\inivlim{\underleftarrow{\lim}}
\begin{document}
		
		\title[Numerical invariants for three-generated ideals]{Numerical invariants for three-generated ideals} 
		\author{Marcos Jardim}
		\address{Universidade Estadual de Campinas (UNICAMP) \\ Instituto de Matem\'atica, Estat\'{\i}stica e Computação Cient\'{\i}fica (IMECC) \\ Departamento de Matem\'atica \\ Rua S\'ergio Buarque de Holanda, 651\\ 13083-970 Campinas-SP, Brazil}
		\email{jardim@unicamp.br}
				\author{Felipe  Monteiro}
		\address{Universidade Estadual de Campinas (UNICAMP) \\ Instituto de Matem\'atica, Estat\'{\i}stica e Computação Cient\'{\i}fica (IMECC) \\ Departamento de Matem\'atica \\ Rua S\'ergio Buarque de Holanda, 651\\ 13083-970 Campinas-SP, Brazil}
		\email{ffcmonteiro00@gmail.com}
		\author{Abbas Nasrollah Nejad}
		\address{Department of Mathematics, Institute for Advanced Studies in Basic Sciences (IASBS), Zanjan 45137-66731, Iran \newline \indent
        Instituto de Ciência e Tecnologia, Universidade Federal de S\~ao Paulo (UNIFESP),
        S\~ao Jos\'e dos Campos, SP, Brazil
        }
		\email{annejad@unifesp.br}

        \author{Zaqueu Ramos}
		\address{Universidade Federal de Sergipe\\ Centro de Ci\^encias Exatas e Tecnologias (CCET)\\ Departamento de Matem\'atica\\ Av. Marcelo Deda Chagas, s/n\\ S\~ao Crist\'v\~ao/SE - CEP 49107-230}
		\email{zaqueu@mat.ufs.br}

        \dedicatory{Dedicated to Professor Aron Simis on the occasion of his 84th birthday.}
		
		\subjclass[2020]{Primary: 13A02, 13D02, 13H15. Secondary: 14B05, 14H20, 14H50}   	
		
		\keywords{three-generated ideals, Bourbaki degree, syzygy modules, graded free resolutions, gradient ideals, plane curves}
		
\begin{abstract}
We study numerical invariants of homogeneous ideals generated by three
forms, with particular emphasis on the interaction between the initial
degree of the syzygy module, the multiplicity of the codimension-two
part of the ideal, and the Bourbaki degree. We extend the Bourbaki degree from gradient ideals of plane curves to arbitrary three-generated homogeneous ideals and obtain sharp numerical bounds in both the general and the equigenerated settings. In the equigenerated case, these bounds extend the du Plessis--Wall picture and lead to the study of numerical, structural, dimensional, and integrable gaps. We show that numerical admissibility does not in general imply realizability and obtain further restrictions from the minimal graded free resolution. As an application, we determine the possible Bourbaki degrees of reduced
singular quartic plane curves and describe them in terms of their singularity configurations; the possible values are $0,1,2,3,4,6,7,8$.
\end{abstract}
	\maketitle

\section*{Introduction}

Three-generated homogeneous ideals form the first genuinely nontrivial
case in codimension two. An ideal of height two generated by two elements
is necessarily a complete intersection. With three generators, the
perfect case is still described by the Hilbert--Burch theorem, but
nonperfect, and hence non-Cohen--Macaulay, behavior already appears.
In both cases the first syzygy module has rank two, and its freeness distinguishes the perfect and nonperfect cases.
There is a natural geometric example of this situation. Let
$X=V(f)\subseteq\PP^2$ be a reduced singular plane curve which is not a cone.
Then its gradient ideal
\[
J_f=(f_x,f_y,f_z)
\]
is minimally generated by three forms of the same degree. The syzygies
of $J_f$ correspond to logarithmic vector fields along $X$, while
$\deg(R/J_f)$ is the global Tjurina number and therefore measures the
singular scheme of the curve. Thus the same rank-two syzygy module
measures, on the algebraic side, the failure of the Hilbert--Burch
situation and, on the geometric side, carries information about the
singularities of the curve. This leads to a basic question: can the
failure of this module to be free be measured by a numerical invariant,
and how does such an invariant interact with $\deg(R/J)$ and, in the
gradient case, with the geometry of the curve?

To make this question precise, let $R=k[x_1,\ldots,x_n]$ be a standard graded polynomial ring over an algebraically closed field of characteristic zero, and let $J=(f_1,f_2,f_3)\subseteq R$ be minimally generated by homogeneous forms
of degrees
\[
1\le d_1\le d_2\le d_3,
\qquad
\gcd(f_1,f_2,f_3)=1.
\]
Set
\[
D=d_1+d_2+d_3,
\qquad
\sigma(\textbf{d}) \doteq \sigma=d_1d_2+d_1d_3+d_2d_3.
\]
If $\nu$ is a minimal homogeneous syzygy of shifted degree $e$, then it
gives an exact sequence
\[
0\lar R(-e)\xrightarrow{\ \nu\ }\syz(J)\lar M_\nu\lar0,
\]
where $M_\nu$ is a torsion-free graded $R$-module of rank one. Theorem~\ref{Bour3Gens}, which is the starting point of the
paper, describes the structure of this module. It is free if and only
if $J$ is perfect of height two. Otherwise,
\[
M_\nu\simeq I_\nu(e-D),
\]
where $I_\nu$ is an unmixed homogeneous ideal of height two, and
\[
\deg(R/I_\nu)+\deg(R/J)=e^2-eD+\sigma.
\]
Here $\deg(R/J)$ means the contribution of the codimension-two
components of $J$. In particular, it is the usual multiplicity when
$\hht(J)=2$ and is zero when $\hht(J)\ge3$.

When $e=\indeg(\syz(J))$, the number $\deg(R/I_\nu)$ does not depend on
the chosen minimal syzygy $\nu$ of initial degree. This gives the
\emph{Bourbaki degree}
\[
\Bour(J)=e^2-eD+\sigma-\deg(R/J),
\]
with the convention that $\Bour(J)=0$ in the perfect height-two case.
Thus the Bourbaki degree relates three pieces of information: the
degrees of the generators, the initial degree of the syzygy module,
and the codimension-two degree of the ideal. Its vanishing characterizes
the Hilbert--Burch case.

The Bourbaki degree was introduced in~\cite{MAA} for gradient ideals of
reduced plane curves. In that setting it measures the failure of the
syzygy module of the gradient ideal to be free. Its vanishing
characterizes the free case, while the Bourbaki formula relates it to
the initial degree of $\syz(J_f)$ and to the global Tjurina number.
Thus the invariant connects the homological structure of the gradient
ideal with the singularities of the curve.

A natural question is how much of this picture depends on the gradient
structure. In the present paper we first remove both the gradient
condition and the assumption that the three generators have the same
degree. We study arbitrary three-generated homogeneous ideals and try
to understand which restrictions come only from three-generation.
We then return to the equigenerated case and finally to gradient ideals
of plane curves. This comparison leads naturally to the realization
and integrability questions considered in the paper.

The general Bourbaki formula already gives strong restrictions when the
generator degrees are arbitrary. Proposition~\ref{prop:basic-numerical-bound}
gives
\[
\Bour(J)\le e^2,
\]
while Proposition~\ref{prop:numerical-bounds-non-equigenerated} gives,
in particular,
\[
d_2<e\le d_1+d_2
\]
and
\[
\Bour(J)\le e^2-eD+\sigma\le d_1d_3.
\]
The last bound is attained exactly in the complete intersection case
with $d_2=d_3$. Proposition~\ref{prop:not-equigen-Bour-consequences}
also characterizes the first values $\Bour(J)=0,1,2$ and describes the
extremal and next-to-extremal cases when the two largest generator
degrees are equal. Moreover,
Example~\ref{ex:next-extremal-non-equigenerated} shows that the value
$d_1d-1$ occurs for every degree type $(d_1,d,d)$ with
$2\le d_1<d$, in fact on a nonempty open family.

These numerical restrictions lead to a realization problem. Fix a
degree vector $\mathbf d=(d_1,d_2,d_3)$. The Bourbaki formula and the preceding bounds give a numerically admissible region for the pairs
\begin{equation}\label{eq:realizable-pairs}
\bigl(\indeg(\syz(J)),\deg(R/J)\bigr) = (e,j).    
\end{equation}
Each admissible pair $(e,j)$ will be called \emph{realizable} (for degree-vector $\textbf{d}$ and dimension $n$) whenever there is a three-generated ideal $J = (f_1, f_2, f_3) \subset k[x_1, \ldots, x_n]$ with $\deg(f_i) = d_i$ satisfying \eqref{eq:realizable-pairs}.  Theorem~\ref{thm:every-e-occurs-structurally} shows that every
numerically possible initial syzygy degree occurs. More precisely, for
every
\[
d_2<e\le d_1+d_2,
\]
the pair
\[
\bigl(e,(d_1+d_2-e)d_3\bigr)
\]
is realizable. However, not every admissible pair is realizable.
Remark~\ref{rem:universal-structural-gap} shows that the admissible pair
\[
(d_1+d_2,d_1d_2)
\]
never occurs. When one generator is linear,
Proposition~\ref{prop:linear-generator-structural-spectrum} gives the
complete realizable spectrum. Proposition~\ref{prop:top-boundary-structural-spectrum}
gives further information when the two largest generator degrees are
equal. These results lead to the general realization problem stated in
Question~\ref{ques:structural-spectrum}.

The equigenerated case is especially important because it includes
gradient ideals of plane curves. Suppose that $J$ is generated by three
forms of the same degree $d$, and let $e$ denote the standard degree of
the coefficients of an initial syzygy. Then the general formula becomes
\[
\Bour(J)+\deg(R/J)=d^2-ed+e^2.
\]
Theorem~\ref{BourBoundEquigenerated} proves, without the local-freeness
assumption used in the earlier result of~\cite[Theorem~3.3]{MFA}, that
\[
\Bour(J)\le e^2.
\]
Consequently,
\[
d(d-e)\le\deg(R/J)\le d^2-ed+e^2.
\]
These inequalities are the natural analogues, for arbitrary
three-generated equigenerated ideals, of the du Plessis--Wall bounds
for the global Tjurina number of a reduced plane curve
\cite[Theorem 3.2]{CTC-Plessis}. The bound $\Bour(J)\le e^2$ is sharp for every
$1\le e<d$, even among gradient ideals, as shown in
Example~\ref{Bour-e-square-gradient}. Proposition~\ref{prop:BourJ-max-res}
also characterizes the equality $\Bour(J)=e^2$ in terms of the minimal
graded free resolution.

The comparison with du Plessis and Wall is important for another
reason. The gap phenomenon discussed in
\cite[Section~4]{CTC-Plessis} is numerical. For each possible initial
degree of the syzygy module of a gradient ideal, their bounds give an
interval in which the global Tjurina number must lie. The gaps in their
table come from the fact that some of these intervals do not meet.
The same numerical picture appears for the bounds above. For
$1\le e<d$, the allowed intervals for $\deg(R/J)$ have total union
\[
[d,d^2-d+1]\cap\mathbb Z,
\]
while the formal block corresponding to $e=0$ consists only of $d^2$.
Thus, in this range, the numerical bounds leave the values
\[
G_d=\{d^2-b\mid1\le b\le d-2\}.
\]
If one of these values is realized, it must occur on the boundary
$e=d$.

The situation for the Bourbaki degree is different. The corresponding
numerical intervals are nested, and their union is
\[
[0,(d-1)^2]\cap\mathbb Z.
\]
Therefore the bounds themselves give no numerical gaps for the
Bourbaki degree in the range $e<d$.

This is where the realization problem begins. Numerical bounds tell us
which pairs are allowed, but they do not tell us which pairs actually
exist. The gap analysis of du Plessis and Wall does not address this
question inside an allowed interval. In the present setting, numerical
admissibility already fails to imply realizability for arbitrary
three-generated ideals. This motivates Definition~\ref{defi:gaps}.
A \emph{structural gap} is a numerically admissible pair which is not
realized in a fixed dimension. A \emph{dimensional gap} is a structural
gap which becomes realizable after increasing the dimension. Finally,
in three variables, an \emph{integrable gap} is a pair which is
realizable by an arbitrary triple but not by the gradient ideal of a
reduced plane curve. Thus we have the chain
\[
\text{numerically admissible}
\quad\supseteq\quad
\text{realizable by a triple}
\quad\supseteq\quad
\text{realizable by a gradient triple}.
\]
The numerical gaps in the du Plessis--Wall picture concern numerical
possibility, while structural and integrable gaps concern actual
realization.

To study structural gaps, we use the resolution theory of
three-generated almost Cohen--Macaulay ideals developed in~\cite{BFRS}.
In Section~\ref{sec:hom-dim-2} we consider $d$-equigenerated strict
almost complete intersections of height two and homological dimension
two. Theorem~\ref{low-degree-gap} gives two low-degree exclusions:
\[
\Bour(J)\neq2\quad\text{if }d=2,
\qquad
\Bour(J)\neq5\quad\text{if }d=3.
\]
These values are allowed by the numerical bounds, so their absence is
not numerical. In dimension three they give structural obstructions.
The quadratic case also shows that dimension matters: the corresponding
pair cannot occur in dimension three, but it is realized in dimension
four.

The cases $d=2$ and $d=3$ suggest that the value $(d-1)^2+1$ might be excluded in every degree. Proposition~\ref{prop:Jd-family}
shows that this is false. For every $d\ge4$, there exists an ideal
$J\subseteq k[x,y,z]$ such that
\[
\indeg(\syz(J))=d,\qquad
\deg(R/J)=2d-2,\qquad
\Bour(J)=(d-1)^2+1.
\]
Thus the exclusions in degrees two and three are special low-degree
phenomena. Example~\ref{ex:quintic-cuspidal} also shows, in degree
$d=4$, that the same Bourbaki value is attained by the gradient ideal
of a quintic curve.

The homological structure also gives a strong restriction near the
other end of the degree range. Theorem~\ref{structural-gap-degree-general}
shows that a $d$-equigenerated strict almost complete intersection of
homological dimension two satisfies
\[
\deg(R/J)\le d^2-d.
\]
Hence the numerically possible value $d^2-d+1$ cannot occur in this
homological class, although it does occur in the perfect case.
Theorem~\ref{structural-gap-degree-general-desc} describes the extremal
case $\deg(R/J)=d^2-d$: only two forms of the minimal graded free
resolution can occur, and in both cases
\[
\Bour(J)=1.
\]
This gives another example where the minimal resolution imposes a
restriction which is not visible from the general numerical bounds.

The resolution theory can be described completely in the first two
degrees. We first obtain a formula for the Bourbaki degree of ideals
with exactly three generating syzygies in
Proposition~\ref{Bourm=3}. We then determine all possible resolution
data for ideals generated by quadrics and by cubics in
Subsections~\ref{subsec:ideals-quadrics} and
\ref{subsec:ideals-cubics}. These classifications allow us to compare
arbitrary triples with gradient ideals. For quadratic generators, every
possible resolution in three variables occurs as the resolution of the
gradient ideal of a plane cubic. The same is true for cubic generators,
as follows from the study of quartic curves.

We therefore return to plane curves in
Section~\ref{sec:quartic-curves}. Let
$X=V(f)\subseteq\PP^2$ be a reduced singular quartic which is not a
cone. Then $J_f$ is generated by three cubics, with
$\deg(R/J_f)=\tau(X)$. If $e=\indeg(\syz(J_f))$, then
\[
\Bour(X)=9-3e+e^2-\tau(X).
\]
Using the classification of three-generated cubic ideals together with
Wall's classification of reduced quartics~\cite{Wall-quartics},
Theorem~\ref{Bour-Quartic} determines the Bourbaki degree from the singularity configuration, except for the two configurations $3A_1$
and $A_3$.The possible values
are
\[
0,1,2,3,4,6,7,8.
\]
In particular, the value $5$ never occurs and, except for the cases $3A_1$ and $A_3$, the singularity configuration alone determines the Bourbaki degree (see Remark~\ref{rmk:ambiguity-sing-type}).
In these two exceptional cases one has $\tau(X)=3$, and the
possible values of $\indeg(\syz(J_f))$ distinguish
$\Bour(X)=4$ from $\Bour(X)=6$.

The missing value $5$ is a useful example of the difference between a
numerical gap and a structural one. It is not excluded by the
generalized du Plessis--Wall bounds. Instead, its occurrence would
require the pair
\[
(\tau(X),e)=(4,3),
\]
and this pair cannot occur for a three-generated cubic ideal.
Thus the missing Bourbaki value $5$ comes from a structural restriction,
not from the numerical bounds.

Finally, comparison with
Subsection~\ref{subsec:ideals-cubics} shows that every possible minimal
graded free resolution of a three-generated cubic ideal in three
variables is realized by the gradient ideal of a quartic; see
Table~\ref{table:gradient-triples-quartics}. Together with the analogous
statement for plane cubics, this shows that there are no integrable gaps
for generator degrees $d=2,3$.

The results of the paper therefore separate three different questions:
what is numerically allowed, what can be realized by a three-generated
ideal, and what can be realized by a gradient ideal. The Bourbaki
formula gives a common numerical framework for these questions, while
the structural results show that numerical admissibility alone is not
enough. The comparison with plane curves then shows how the same
invariants interact with singularities and with the gradient condition.
Understanding these realization problems in higher degree and in higher dimensions remains a
natural question.
\subsection*{Acknowledgments}
MJ is partially supported by the CNPq grant number 305601/2022-9 and the FAPESP-CEPID Project 2024/00923-6.
FM is supported by the São Paulo Research Foundation (FAPESP) by the PhD grant number $\#2021/10550-4$, under the cotutelle supervision of Marcos Jardim and Daniele Faenzi.
ANN was partially supported by FAPESP Grant No.~2026/08480-1  and sincerely thanks IMECC–UNICAMP for its generous hospitality and for providing an excellent academic environment during his visit. ZR is partially supported by the CNPq grant number 304056/2026-0. 

\section{Bourbaki degree of three generated ideals}\label{sec:3-gen-general}
In this section, we extend the notion of Bourbaki degree from the case of gradient ideals of plane curves (established in \cite{MAA}) to arbitrary ideals minimally generated by three homogeneous forms.

Let $R=k[x_1,\ldots,x_n]$ be a polynomial ring with standard grading over an algebraically closed field $k$ of characteristic zero, with $n \geq 3$. Let $J=(f_1,f_2,f_3)\subseteq R$ be an ideal generated minimally by three forms of $\deg(f_i)=d_i$ with $1\leq d_1\leq d_2\leq d_3$. Assume that $\gcd(f_1,f_2,f_3)=1$, so that $\hht(J)\geq 2$. We write $\indeg(\syz(J))$ for the \emph{initial degree} of the syzygy module $\syz(J)$, the minimum integer $e$ such that the graded piece $\syz(J)_e \neq 0$.

Throughout the paper, for a homogeneous ideal $J\subseteq R$ with $\hht(J)\geq 2$, we write $\deg(R/J)$ for the codimension-two degree of $R/J$. More precisely, from the associativity formula (see, for example, \cite[Corollary 4.6.8]{bruns1998cohen}), one may set:
\[
\deg(R/J):=\sum_{\substack{\fp\in\Ass(R/J)\\ \hht(\fp)=2}}
\length_{R_\fp}\big((R/J)_\fp\big)\deg(R/\fp)
\]
where $\lambda$ denotes the length of the Artinian module over the local ring. In particular, if $\hht(J)=2$, $\deg(R/J)$ is the usual multiplicity of $R/J$, while if $\hht(J)\geq 3$, then $\deg(R/J)=0$.

Set
\[
D:=d_1+d_2+d_3,
\qquad
\sigma:=d_1d_2+d_1d_3+d_2d_3.
\]
One has the following graded exact sequence of $R$-modules
\[ 0\lar \syz(J)\lar \bigoplus_{i=1}^3R(-d_i)\xrightarrow{[f_1\  f_2\ f_3]} J\lar 0.\]
Let $\nu=(a_1,a_2,a_3)\in\syz(J)$ be a minimal homogeneous generator of shifted degree $e$, so that $\deg(a_i)=e-d_i$ whenever $a_i\neq0$. It induces a graded homomorphism $R(-e)\stackrel{\nu}{\lar} \syz(J)$ fitting into an exact sequence of graded $R$-modules
\[ 0\lar R(-e)\stackrel{\nu}{\lar} \syz(J) \lar M_\nu\lar 0, \]
where $M_\nu$ is the cokernel of the map $\nu$. 

The following elementary lemma is well-known; we include the proof for completeness.

\begin{Lemma}\label{lem:cok-torsion-free}
$M_\nu$ is a torsion-free graded $R$-module of rank one.
\end{Lemma}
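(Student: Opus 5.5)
The plan is to compute the rank first, then detect torsion by embedding $M_\nu$ into a free module of rank one.

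For the rank, $\syz(J)$ is the kernel of $\bigoplus R(-d_i)\to J$. Since $J\neq 0$ has rank one, additivity of rank gives $\rank \syz(J)=3-1=2$. From the sequence $0\to R(-e)\to\syz(J)\to M_\nu\to 0$ we then get $\rank M_\nu=1$.

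For torsion-freeness, I would construct an injection of $M_\nu$ into a rank-one free module. Consider the cross product map
\[
\phi:\bigoplus_{i=1}^3 R(-d_i)\lar R(\ast),\qquad b=(b_1,b_2,b_3)\mapsto \nu\times b,
\]
that is, the three $2\times2$ minors of the matrix with rows $\nu$ and $b$. I would restrict this to $\syz(J)$, noting that a syzygy $b$ satisfies $b\cdot f=0$ where $f=(f_1,f_2,f_3)$. Over the fraction field $K$, the syzygies $\syz(J)\otimes K$ form the $2$-dimensional space $f^{\perp}$. For $b\in f^\perp$, the cross product $\nu\times b$ is orthogonal to $\nu$ and to $b$, so it is a $K$-multiple of $f$ (assuming $\nu$ and $b$ are independent; otherwise it is $0$). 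Since $\gcd(f_1,f_2,f_3)=1$, any polynomial vector proportional to $f$ is $g\cdot f$ with $g\in R$. This gives a graded $R$-linear map $\psi:\syz(J)\to R(e-D)$, $b\mapsto g$, defined by $\nu\times b=\psi(b)f$, and the twist can be checked from degrees.

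Next I would identify the kernel. We have $\psi(b)=0$ exactly when $\nu\times b=0$, i.e. when $b$ is $K$-proportional to $\nu$, so $b\in \syz(J)\cap K\nu$. The key point is that this intersection equals $R\nu$. Since $\nu$ is a minimal generator of $\syz(J)$ of degree $e$, its entries $a_i$ have no common factor. Indeed, if $h=\gcd(a_1,a_2,a_3)$ had positive degree, then $\nu/h$ would still be a syzygy, because $R$ is a domain. Then $\nu=h\cdot(\nu/h)$ would lie in $\mathfrak m\,\syz(J)$, contradicting minimality. Now if $b=(p/q)\nu$ with $b\in R^3$, then $q$ divides $p a_i$ for all $i$, hence $q\mid p$ because the $a_i$ are coprime. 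So $b\in R\nu$, and $\ker\psi=R\nu$.

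Therefore $M_\nu\cong\Im\psi\subseteq R(e-D)$, which is a submodule of a domain and hence torsion-free. The main obstacle is the coprimality of the entries of $\nu$ deduced from minimality, together with the verification that $\psi$ is well defined and $R$-valued, which uses $\gcd(f_i)=1$. Both are elementary. This embedding also foreshadows $M_\nu\simeq I_\nu(e-D)$.
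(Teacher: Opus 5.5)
Your proof is correct, but it takes a different route from the paper's. Both arguments rest on the same key input, namely that minimality of $\nu$ forces $\gcd(a_1,a_2,a_3)=1$, and both get the rank from $\rank\syz(J)=2$.

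The paper stays inside the ambient free module. It shows that the cokernel $C$ of the lift $\widetilde\nu\colon R(-e)\to\bigoplus_i R(-d_i)$ is torsion-free: if $hu=r\nu$, coprimality gives $h\mid r$. It then deduces $M_\nu\hookrightarrow C$ from the snake lemma. That argument uses only that $\syz(J)$ sits in a free module and that $\nu$ has coprime coordinates. It needs neither $\gcd(f_1,f_2,f_3)=1$ nor the fact that there are exactly three generators.

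Your cross-product map, defined by $\nu\times b=\psi(b)f$, is special to three generators and does use $\gcd(f_1,f_2,f_3)=1$. In return it gives more: it embeds $M_\nu$ directly into the rank-one free module $R(e-D)$. So you see $M_\nu$ as a shifted ideal with the correct twist $e-D$ from a simple degree count. The paper only obtains this in Theorem~\ref{Bour3Gens}(b), after removing a common factor and comparing Hilbert series.

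Your construction also gives the height condition at no extra cost. Apply the identity $\nu\times(f\times\mathbf u)=(\nu\cdot\mathbf u)f-(\nu\cdot f)\mathbf u$ to the Koszul syzygies $f\times\mathbf u_i$, where the $\mathbf u_i$ are the standard basis vectors. This yields $\psi(f\times\mathbf u_i)=a_i$, so $(a_1,a_2,a_3)\subseteq\Im\psi$, and hence the ideal $\Im\psi$ already has height at least two.

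One small slip: before restricting, the cross-product map lands in a free module of rank three, namely $\bigoplus_i R(e-D+d_i)$, not in a rank-one module $R(\ast)$. Only after restricting to $\syz(J)$ and dividing by $f$ do you obtain $\psi$.
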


\begin{proof}
Since $\nu=(a_1,\, a_2,\, a_3)$ is a \emph{minimal} homogeneous generator, the coordinates $a_1,a_2,a_3$ have no nonconstant common factor. Indeed, if $a_i=g b_i$ with $\deg g>0$, then from
$a_1f_1+a_2f_2+a_3f_3=0$ and the fact that $R$ is a domain, one gets
$b_1f_1+b_2f_2+b_3f_3=0$, so $(b_1,b_2,b_3)\in \syz(J)$ and
$\nu=g(b_1,b_2,b_3)\in \fm\syz(J)$, where $\fm$ is the irrelevant maximal ideal of $R$, contradicting the minimality of $\nu$.

Now let
\[
C:=\cok\!\Big(R(-e)\xrightarrow{\ \widetilde{\nu}\ }
\bigoplus_{i=1}^3R(-d_i)\Big)
\]
where $\widetilde{\nu}$ is the natural lift of $\nu$ via the inclusion $\syz(J) \hookrightarrow \bigoplus_{i=1}^3 R(-d_i)$. We claim that $C$ is a torsion-free module. Let $h\neq 0$ and suppose that
$h\bar u=0$ in $C$, where $\bar u$ is the class of $u=(u_1,u_2,u_3)$.
Then $hu=r(a_1,a_2,a_3)$ for some $r\in R$, i.e.,
$hu_i=ra_i$ for all $i$. Since $\gcd(a_1,a_2,a_3)=1$ and $R$ is a UFD,
it follows that $h\mid r$. Writing $r=hs$, we get
$hu_i=hsa_i$, hence $u_i=sa_i$ for all $i$, because $R$ is a domain.
Thus $u=s(a_1,a_2,a_3)$ lies in the image of $\widetilde{\nu}$, so
$\bar u=0$, and thus $C$ is torsion-free. By the snake lemma applied to the commutative diagram
\[
\begin{array}{ccccccccc}
0&\to&R(-e)&\xrightarrow{\nu}&\syz(J)&\to&M_\nu&\to&0\\
&&\Vert&&\downarrow&&\downarrow\\
0&\to&R(-e)&\xrightarrow{\widetilde{\nu}}&
\bigoplus_{i=1}^3R(-d_i)&\to&C&\to&0
\end{array}
\]
we obtain an injection $M_\nu\hookrightarrow C$. Hence $M_\nu$ is torsion-free. Since $\rk \syz(J)=3-\rk J=2$, it follows that $\rk(M_\nu)=1$. Therefore $M_\nu$ is torsion-free of rank one. 
\end{proof}

The following result may be viewed as an extension of \cite[Theorem 2.1]{MAA} to arbitrary three-generated homogeneous ideals.

\begin{Theorem}\label{Bour3Gens}
Let $\nu\in \syz(J)$ be a minimal homogeneous syzygy of degree $e\geq 1$. Then the following hold.
\begin{enumerate}
\item[\rm(a)]
$M_\nu$ is free if and only if $J$ is a perfect ideal of codimension $2$. In this case,
\[
\syz(J)\simeq R(-e)\oplus R(-(D-e)),
\]
hence $J$ admits a graded Hilbert-Burch minimal free resolution
\[
0\lar R(-e)\oplus R(-(D-e))
\lar
R(-d_1)\oplus R(-d_2)\oplus R(-d_3)
\lar J\lar 0,
\]
and
\[
\deg(R/J)=e^2-eD+\sigma.
\]

\item[\rm(b)]
If $J$ is not a perfect ideal of codimension $2$, then $M_\nu$ is isomorphic to an unmixed homogeneous ideal $I_\nu\subseteq R$ of height $2$, up to shift. More precisely,
\[
M_\nu \simeq I_\nu(e-D).
\]
Moreover,
\[
\deg(R/I_\nu)=e^2-eD+\sigma-\deg(R/J).
\]

\item[\rm(c)]
Assume that $J$ is not a perfect ideal of codimension $2$, and choose a complete set of minimal homogeneous
generators of $\syz(J)$ containing $\nu$. Let
\[
\cdots \to F_1
\xrightarrow{\left[\begin{smallmatrix}\lambda\\ \psi\end{smallmatrix}\right]}
R(-e)\oplus F_0
\xrightarrow{(\nu,\varphi)}
\syz(J)
\lar 0
\]
be the resulting graded minimal free resolution. Then a graded minimal free
resolution of $I_\nu$ is given by
\[
\cdots \to F_1(D-e)
\xrightarrow{\ \psi(D-e)\ }
F_0(D-e)
\lar I_\nu
\lar 0.
\]
\end{enumerate}
\end{Theorem}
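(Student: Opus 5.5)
The plan is to run the standard Bourbaki-sequence argument, using Lemma~\ref{lem:cok-torsion-free} throughout.

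\emph{Step 1: identify $M_\nu$ with an ideal.} By Lemma~\ref{lem:cok-torsion-free}, $M_\nu$ is torsion-free of rank one, so it embeds into $R$ as an ideal up to shift. To pin down the shift, take determinants. The syzygy module $\syz(J)$ is a second syzygy module, hence reflexive of rank two. Its determinant is $\bigwedge^2\syz(J)^{**}\simeq R(-D)$, which can be read off from $0\to\syz(J)\to\bigoplus R(-d_i)\to J\to 0$ together with $\det J\simeq R$ (since $\hht J\ge 2$). Hence $M_\nu^{**}\simeq R(e-D)$, and we may write $M_\nu\simeq I_\nu(e-D)$ with $I_\nu\subseteq R$.

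Concretely, I would map $\syz(J)\to R(-D+e)$ by $w\mapsto$ the $2\times 2$ minor $\nu\wedge w$, divided by the common factor. Using the Koszul relation, $\nu\wedge w$ is a multiple of $(f_1,f_2,f_3)$ by a scalar form; that scalar form is the map. Its kernel is exactly $R\nu$, since $\gcd(f_i)=1$.

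\emph{Step 2: height and unmixedness of $I_\nu$.} Since $M_\nu\hookrightarrow M_\nu^{**}$ and $\syz(J)$ is reflexive, the Bourbaki sequence shows that $M_\nu$ satisfies Serre's $(S_2)$-type depth condition in codimension $\le 2$ modulo the free summand. So $\operatorname{Ext}^1(M_\nu,R)$ controls $R/I_\nu$, and the depth lemma gives $\depth (R/I_\nu)_\fp\ge 1$ at primes of height $\ge 3$. This makes $I_\nu$ unmixed of height $2$ unless $I_\nu=R$, i.e.\ unless $M_\nu$ is free.

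\emph{Step 3: part (a).} If $M_\nu$ is free, it must be $R(e-D)$, so $\syz(J)\simeq R(-e)\oplus R(-(D-e))$ and $\pd J=1$. Then $J$ is perfect of codimension $2$, and Hilbert--Burch gives the resolution. The multiplicity comes from the Hilbert polynomial: the leading coefficient of the alternating sum of shifts gives
\[
\deg(R/J)=\tfrac12\bigl(e^2+(D-e)^2-\textstyle\sum d_i^2\bigr)=e^2-eD+\sigma.
\]
Conversely, if $J$ is perfect, then $\syz(J)$ is free of rank two. Splitting off the minimal generator $\nu$ leaves a free module, so $M_\nu$ is free.

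\emph{Step 4: the degree formula in (b) and part (c).} I would compare Hilbert series through the two short exact sequences:
\[
H_{R/J}=H_R-\textstyle\sum H_R(-d_i)+H_{\syz}, \qquad H_{\syz}=H_R(-e)+H_{R}(e-D)-H_{R/I_\nu}(e-D).
\]
Both $R/J$ and $R/I_\nu$ have codimension-two components, and the higher-codimension parts of $R/J$ do not affect the $t^{n-2}$ coefficient. So I would extract the coefficient of order $(1-t)^{-(n-2)}$ by taking the second derivative at $t=1$ of the numerators. This yields
\[
\deg(R/J)+\deg(R/I_\nu)=e^2-eD+\sigma.
\]
This bookkeeping, together with the careful verification of the depth/unmixedness claim in Step 2, is the main obstacle.

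For (c), the map $(\nu,\varphi)$ composed with $\syz(J)\to M_\nu$ kills $R(-e)$. Hence $F_0\to M_\nu$ is surjective, with kernel the image of $\psi$ after accounting for $\lambda$. Minimality of the original resolution implies that no unit entries appear in $\psi$. Shifting by $D-e$ then gives the stated minimal resolution of $I_\nu$.
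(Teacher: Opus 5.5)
Your proposal is correct. Most of it runs parallel to the paper: the unmixedness argument (depth lemma on the localized Bourbaki sequence at primes of height $\ge 3$), the second-derivative computation on the Hilbert numerators, and the lifting argument for (c) are the same.

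The genuine difference is Step 1. The paper only uses that a rank-one torsion-free module over a UFD is a shifted ideal, and divides out a common factor to force $\hht(I_\nu)\ge 2$. It then determines the shift $e-D$ a posteriori: from the first derivative of the Hilbert-series identity in (b), and from the degree of a Hilbert--Burch minor in (a). You determine the shift a priori, via $\det\syz(J)\simeq R(-D)$ and $M_\nu^{**}\simeq R(e-D)$, or concretely via $w\mapsto c$ where $\nu\times w=c\,(f_1,f_2,f_3)$.

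Your route has three payoffs. It gives an explicit generating set for $I_\nu$ (the forms $c$ coming from the other minimal generators of $\syz(J)$). It gives $\hht(I_\nu)\ge 2$ for free if you take $I_\nu$ to be the image of $M_\nu$ in $M_\nu^{**}$; you should say this explicitly, since Step 2 needs it. And it makes the splitting in (a) immediate. The cost is the determinant formalism (multiplicativity on short exact sequences, and $\det M\simeq M^{**}$ in rank one), which the paper's elementary route avoids.

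If you want to avoid that formalism too, the cross-product map alone suffices. At a height-one prime $\fp$ the row $(f_1,f_2,f_3)$ is unimodular, and $\nu\notin\fp R_\fp^3$ is part of a basis of the rank-two direct summand $\syz(J)_\fp$. The cross product of such a basis is a unit multiple of $(f_1,f_2,f_3)$, so $c(w)\notin\fp$ for a suitable $w\in\syz(J)$.

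Two small corrections:
\begin{enumerate}
\item The kernel of $w\mapsto c$ is $R\nu$ because $\gcd(a_1,a_2,a_3)=1$, which comes from minimality of $\nu$ as in Lemma~\ref{lem:cok-torsion-free}. The condition $\gcd(f_1,f_2,f_3)=1$ is instead what makes $c$ a polynomial.
\item Step 2 should simply read as follows. Reflexivity of $\syz(J)$ (Serre's $(S_2)$) gives $\depth\syz(J)_\fp\ge 2$ whenever $\hht\fp\ge 3$; the paper gets this directly from $0\to\syz(J)_\fp\to R_\fp^3\to J_\fp\to 0$. The depth lemma then gives $\depth(I_\nu)_\fp\ge 2$, hence $\depth(R/I_\nu)_\fp\ge 1$. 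Your remarks about ``codimension $\le 2$'' and about $\operatorname{Ext}^1(M_\nu,R)$ controlling $R/I_\nu$ are not needed and, as stated, not accurate.
\end{enumerate}
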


\begin{proof}
\noindent
(a) Suppose first that $M_\nu$ is free. Since it has rank one, there exists an integer
$s$ such that $M_\nu\simeq R(s)$, and the short exact sequence becomes
\[
0\lar R(-e)\lar \syz(J)\lar R(s)\lar 0,
\]
which must split, hence
\[
\syz(J)\simeq R(-e)\oplus R(s)
\]
is free of rank two, and $J$ has projective dimension one. Hence $\pd_R(R/J)=2$. Since $R$ is regular,
\[
\hht(J)=\grade(J)\le\pd_R(R/J)=2.
\]
Together with the standing hypothesis $\hht(J)\ge2$, this gives
$\hht(J)=2$. Thus $R/J$ is Cohen--Macaulay, and $J$ is perfect
of height two.

Conversely, if $J$ is perfect of height two, then $\syz(J)$ is a free graded module of rank two.
Because $\nu$ is a minimal generator of $\syz(J)$, it can be completed to a homogeneous
basis, so its cokernel $M_\nu$ is free of rank one. It remains to determine the second shift. Write $M_\nu\simeq R(s)$. From the exact
sequences above, one obtains
\[
0\lar R(-e)\oplus R(s)
\stackrel{\phi=(a_{i,j})}\lar
R(-d_1)\oplus R(-d_2)\oplus R(-d_3)
\lar R\to R/J\lar 0.
\]
where $\deg a_{i,1}=e-d_i$ and $\deg a_{i,2}=-s-d_i$. By the Hilbert-Burch theorem, \[f_3=\det\begin{bmatrix}
    a_{1,1}&a_{1,2}\\
    a_{2,1}&a_{2,2}
\end{bmatrix}.\] Hence, 
\[d_3=\deg f_3=e-s-d_1-d_2,\]
equivalently,
$$s=e-D.$$
With this and the minimal graded free resolution of $R/J$, we conclude that the Hilbert series of $R/J$ is
\[
\Hs_{R/J}(t)=\frac{1-t^{d_1}-t^{d_2}-t^{d_3}+t^e+t^{D-e}}{(1-t)^{n}}.
\] Taking the second derivative of the numerator at $t=1$ and dividing by $2$, we get
\[
\deg(R/J)=e^2-eD+\sigma.
\]

\smallskip

\noindent
{\rm(b)} Assume that $J$ is not a perfect ideal of codimension $2$. Since $M_\nu$ is torsion-free of rank one and $R$ is a UFD, $M_\nu$ is isomorphic to
\[
M_\nu\simeq I_\nu(s)
\]
for some homogeneous ideal $I_\nu\subseteq R$ and some integer $s$. If $\hht(I_\nu)=1$, let $h\neq 0$ be the homogeneous gcd of a set of homogeneous generators of $I_\nu$. Then $I_\nu=hK$ for some homogeneous ideal $K\subseteq R$. By maximality of $h$, the ideal $K$
has no nonconstant common factor; hence $\hht(K)\ge 2$. Since multiplication by $h$ induces an isomorphism of $R$-modules $K\overset{\cdot h}{\simeq} I_\nu$ (and a graded isomorphism $K(-\deg h)\simeq I_\nu$), replacing $I_\nu$ by $K$ only changes the shift. Therefore, after this replacement, we may assume that
$\hht(I_\nu)\ge 2$.

We now prove that $I_{\nu}$ is unmixed of height exactly two. Let $\fp \in \Ass(R/I_\nu)$. Since
$\hht(I_\nu)\geq 2$, we have $\hht(\fp)\geq2$. We show that $\hht(\fp)$ cannot be
strictly larger than $2$.

Set $E=\syz(J)$. We claim that
\[
\depth E_\fp \geq2
\]
for every prime $\fp$ with $\hht(\fp)\geq3$. Indeed, if $\fp \not\supseteq J$, then
$J_\fp=R_\fp$, and the sequence
\begin{align*}
0\to E_\fp\to R_\fp^3\to R_\fp\to0    
\end{align*}
splits; hence $E_\fp$ is free. Otherwise, when $\fp\supseteq J$, then $J_\fp$ is a nonzero ideal
of the domain $R_\fp$, so $\depth J_\fp\geq1$. From $\depth R_\fp=\hht(\fp)\geq3$, using the depth lemma~\cite[Proposition~1.2.9]{bruns1998cohen} with respect to 
$$
0 \to E_{\fp} \to R_{\fp}^3 \to J_\fp \to 0
$$
yields $\depth E_\fp\geq2$.

Now localize
\[
0\to R(-e)\to E\to I_\nu(s)\to0
\]
at a prime $\fp$ with $\hht(\fp)\geq3$. Since $\depth R_\fp\geq3$ and
$\depth E_\fp\geq2$, the depth lemma gives
\[
\depth (I_\nu)_\fp \geq2.
\]
Using the exact sequence
\[
0\to (I_\nu)_\fp \to R_\fp \to (R/I_\nu)_\fp \to0,
\]
we obtain $\depth (R/I_\nu)_\fp \geq1$. Therefore no prime of height at least $3$ can be associated to $R/I_\nu$, so that every associated prime of $R/I_\nu$ has height $2$, and $I_\nu$ is unmixed
of height $2$.

To determine the shift and the degree, write
\[
\Hs_{R/J}(t)=\frac{h(t)}{(1-t)^{n-2}},
\qquad
\Hs_{R/I_\nu}(t)=\frac{h_{I_\nu}(t)}{(1-t)^{n-2}},
\]
where
\[
h(1)=\deg(R/J),
\qquad
h_{I_\nu}(1)=\deg(R/I_\nu).
\]
From
\[
\Hs_{M_\nu}(t)
=
\Hs_{\syz(J)}(t)-\frac{t^e}{(1-t)^n}
=
\frac{t^{d_1}+t^{d_2}+t^{d_3}-t^e-1}{(1-t)^n}
+\Hs_{R/J}(t),
\]
and from the isomorphism $M_\nu\simeq I_\nu(s)$ it follows that
\[
\Hs_{M_\nu}(t)=t^{-s}\Hs_{I_\nu}(t)
=
t^{-s}\!\left(\frac{1}{(1-t)^n}-\Hs_{R/I_\nu}(t)\right).
\]
Comparing the two expressions for $\Hs_{M_\nu}(t)$, we get
\[
t^{-s}-t^{d_1}-t^{d_2}-t^{d_3}+t^e+1
=
(1-t)^2\bigl(t^{-s}h_{I_\nu}(t)+h(t)\bigr).
\]
Taking the first derivative at $t=1$ yields $s=e-D$ and taking the second derivative at $t=1$ yields:
\[
\deg(R/I_\nu)+\deg(R/J)
=e^2-eD+\sigma.
\]

\smallskip

\noindent

For part~(c), start with a graded minimal free resolution of $\syz(J)$
adapted to the chosen generator $\nu$:
\[
\cdots \to F_1
\xrightarrow{\left[\begin{smallmatrix}\lambda\\ \psi\end{smallmatrix}\right]}
R(-e)\oplus F_0
\xrightarrow{(\nu,\varphi)}
\syz(J)
\lar 0.
\]
Since $M_\nu=\syz(J)/R(-e)\nu$, composing $(\nu,\varphi)$ with the quotient map
$\syz(J)\twoheadrightarrow M_\nu$ kills the summand $R(-e)$ and induces a surjection
\[
F_0 \lar M_\nu.
\]
Its kernel is precisely $\operatorname{Im}(\psi)$: indeed, an element
$y\in F_0$ maps to zero in $M_\nu$ if and only if $\varphi(y)\in R(-e)\nu$,
equivalently, if and only if there exists $x\in R(-e)$ such that
$(x,y)\in \ker(\nu,\varphi)=\operatorname{Im}\!\left[\begin{smallmatrix}\lambda\\ \psi\end{smallmatrix}\right]$,
which means exactly that $y\in \operatorname{Im}(\psi)$. Therefore, one obtains a resolution
\[
\cdots \to F_1
\xrightarrow{\ \psi\ }
F_0
\lar M_\nu
\lar 0.
\]
By part~(b), $M_\nu\simeq I_\nu(e-D)$. Shifting by $D-e$, we get a graded free resolution
\[
\cdots \to F_1(D-e)
\xrightarrow{\ \psi(D-e)\ }
F_0(D-e)
\lar I_\nu
\lar 0.
\]
This resolution is minimal because the original resolution of $\syz(J)$ is minimal;
hence all entries of the matrix of $\psi$ and the next maps lie in the irrelevant maximal ideal.
\end{proof}

\begin{Definition}\label{defi:Bourbaki-degree-non-equigen}
Assume that $e=\indeg(\syz(J))$, and let $\nu\in \syz(J)$ be any minimal generator of
degree $e$. If $J$ is not a perfect ideal of height two, let $I_\nu$ be the ideal appearing in 
Theorem~\ref{Bour3Gens}. We call $I_\nu$ the \emph{Bourbaki ideal} of $J$ associated to $\nu$. The \emph{Bourbaki degree} of $J$ is defined by:
$$
\Bour(J):= \begin{cases}
0,             &\text{ if }J\text{ is perfect of height two;}\\
\deg(R/I_\nu), &\text{ otherwise.}\\
\end{cases} .
$$
By Theorem~\ref{Bour3Gens}, this number is independent of the choice
of $\nu$ of initial degree; it satisfies
\begin{equation}\label{eq:Bourbaki-formula-non-equigen}
\Bour(J)=e^2-eD+\sigma-\deg(R/J)    
\end{equation}
and it is zero if and only if the syzygy module $\syz(J)$ is graded free.
\end{Definition}

We record a numerical consequence of the Bourbaki degree formula. This bound will be useful when comparing the Bourbaki degree with the degree of $R/J$.

\begin{Proposition}\label{prop:basic-numerical-bound}
Let $J=(f_1,f_2,f_3)\subseteq R=k[x_1,\ldots,x_n]$ be minimally generated by
homogeneous forms of degrees $1\le d_1\le d_2\le d_3$, with
$\gcd(f_1,f_2,f_3)=1$. Set $D=d_1+d_2+d_3$ and
$\sigma=d_1d_2+d_1d_3+d_2d_3$, and let $e=\indeg(\syz(J))$ in the shifted
grading. Then
\[
\Bour(J)\le e^2.
\]
Consequently,
\[
\sigma-eD\le \deg(R/J)\le e^2-eD+\sigma.
\]
\end{Proposition}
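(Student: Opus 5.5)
The plan is to identify the Bourbaki ideal $I_\nu$ explicitly as a "cofactor ideal" that contains the entries of $\nu$, and then to bound its degree by a complete intersection. If $J$ is perfect of height two, then $\Bour(J)=0\le e^2$ and there is nothing to prove. So assume not, and fix a minimal syzygy $\nu=(a_1,a_2,a_3)$ of degree $e=\indeg(\syz(J))$. By the proof of Lemma~\ref{lem:cok-torsion-free}, $\gcd(a_1,a_2,a_3)=1$.

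\emph{Step 1: the cofactor map.} For any $\mu\in\syz(J)$, both $\nu$ and $\mu$ are orthogonal to $f=(f_1,f_2,f_3)$ over the fraction field $K$ of $R$. Hence the cross product $\nu\times\mu$ is a $K$-multiple of $f$. Since $\gcd(f_1,f_2,f_3)=1$ and $R$ is a UFD, we get $\nu\times\mu=c(\mu)\,f$ with $c(\mu)\in R$ homogeneous.

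The map $\mu\mapsto c(\mu)$ is $R$-linear and kills $R\nu$. It is injective on $M_\nu$: if $c(\mu)=0$, then $\mu$ is $K$-proportional to $\nu$, and torsion-freeness of $M_\nu$ (Lemma~\ref{lem:cok-torsion-free}) forces $\bar\mu=0$. Tracking degrees, $\deg c(\mu)=e+\deg\mu-D$. So $c$ realizes $M_\nu$ as a graded ideal $I'\subseteq R$ shifted by $e-D$.

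\emph{Step 2: $I'$ is the Bourbaki ideal.} Apply $c$ to the Koszul syzygies $(0,f_3,-f_2)$, $(-f_3,0,f_1)$, $(f_2,-f_1,0)$. A direct computation gives, up to sign, $c=a_1,a_2,a_3$ respectively; for example $\nu\times(0,f_3,-f_2)=a_1f$. Hence
\[
A:=(a_1,a_2,a_3)\subseteq I'.
\]
Since $\gcd(a_i)=1$, $A$ has height $\ge2$, so $I'$ has no common factor. By the normalization in the proof of Theorem~\ref{Bour3Gens}(b), an ideal of height $\ge2$ representing $M_\nu$ is determined up to multiplication by a unit: any isomorphism between two rank-one reflexive ideals of height $\ge2$ is multiplication by a unit. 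Thus $I'=I_\nu$, and the shift $e-D$ agrees with the theorem. I expect this identification, with the bookkeeping of the gcd and the shift, to be the main technical point. An alternative is simply to show $A\subseteq I_\nu$ by tracing the Koszul syzygies through the isomorphism $M_\nu\simeq I_\nu(e-D)$.

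\emph{Step 3: degree bound.} The ideal $A$ is generated in degrees $e-d_i\le e-d_1$. Multiply the generators by general forms to bring them all to degree $e-d_1$. Since $k$ is infinite and $\hht A\ge2$, two general elements $g_1,g_2\in A$ of degree $e-d_1$ then form a regular sequence, and $(g_1,g_2)\subseteq A\subseteq I_\nu$.

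Because $I_\nu$ is unmixed of height two (Theorem~\ref{Bour3Gens}(b)), each associated prime $\fp$ of $R/I_\nu$ is minimal over $(g_1,g_2)$, and $\length((R/I_\nu)_\fp)\le\length((R/(g_1,g_2))_\fp)$. The associativity formula then gives
\[
\Bour(J)=\deg(R/I_\nu)\le\deg R/(g_1,g_2)=(e-d_1)^2\le e^2.
\]

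Finally, the consequence follows from formula~\eqref{eq:Bourbaki-formula-non-equigen}. Since $\Bour(J)\ge0$, we get $\deg(R/J)\le e^2-eD+\sigma$. Since $\Bour(J)\le e^2$, we get $\deg(R/J)=e^2-eD+\sigma-\Bour(J)\ge\sigma-eD$.
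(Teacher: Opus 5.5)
Your proof is correct, and it takes a genuinely different route from the paper's.

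The paper never touches the Bourbaki ideal. It proves the equivalent inequality $\deg(R/J)\ge\sigma-eD$ by case analysis on the support of $\nu$. If all three entries of $\nu$ are nonzero, then $e\ge d_3$ and $\sigma-eD\le 0$, so there is nothing to prove. If only $a_i,a_j$ are nonzero, then $e=d_i+d_j-c$ with $c=\deg\gcd(f_i,f_j)$, and the containment $J\subseteq(\gcd(f_i,f_j),f_k)$ gives $\deg(R/J)\ge cd_k\ge\sigma-eD$.

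You instead realize $M_\nu$ inside $R$ via the cofactor map. You observe that the Koszul syzygies map to the entries of $\nu$, so $(a_1,a_2,a_3)\subseteq I_\nu$, and you then compare $I_\nu$ with a complete intersection. This needs more setup, but it proves more.
\begin{itemize}
\item You actually get $\Bour(J)\le(e-d_1)^2$.
\item Take $g_1\neq 0$ in $A_{e-d_2}$ (possible since at least one of $a_2,a_3$ is nonzero) and $g_2$ general in $A_{e-d_1}$. The same argument then gives $\Bour(J)\le(e-d_1)(e-d_2)$, which is attained by the ideals of Theorem~\ref{thm:every-e-occurs-structurally}.
\item In the equigenerated case your bound reads $\Bour(J)\le e^2$ in the standard grading. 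So it recovers Theorem~\ref{BourBoundEquigenerated} directly, in any dimension. The paper's argument for this proposition only yields $(d+e)^2$ there, and the paper needs a separate colon-ideal argument for that theorem. That argument is close in spirit to yours: it uses that the last entry of $\nu$ lies in $(f_1,f_2):f_3$.
\end{itemize}
The paper's proof, for its part, is shorter and never needs to identify $I_\nu$.

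Two small repairs are needed.

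In Step~2, drop the word ``reflexive''. A proper ideal of height $\ge 2$ in a UFD is never reflexive, since its bidual is $R$. What you actually use is true and suffices: an isomorphism between nonzero ideals of a domain is multiplication by an element of the fraction field, and that element must be a unit of $R$ when neither ideal has a nonconstant common factor.

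In Step~3, say why two general elements of degree $e-d_1$ form a regular sequence. Since $A$ is generated in degrees at most $e-d_1$, the ideal $(A_{e-d_1})$ satisfies $\fm^{d_3-d_1}A\subseteq(A_{e-d_1})\subseteq A$. Hence it has the same radical as $A$, and therefore height at least $2$.
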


\begin{proof}
The right-hand inequality is clear as $\Bour(J) \geq 0$. For the left-hand inequality, by the formula \eqref{eq:Bourbaki-formula-non-equigen}, it suffices to show $
\deg(R/J)\ge \sigma-eD$, which by the same formula is equivalent to $\Bour(J) \leq e^2$. 

Let $\nu=(a_1,a_2,a_3)\in\syz(J)$ be a nonzero homogeneous syzygy of shifted
degree $e$. At least two of the coordinates of $\nu$ are nonzero. First, assuming that all three coordinates of $\nu$ are nonzero, we get
$e-d_i\ge0$ for every $i$, and hence $e\ge d_3$. Therefore
\[
eD\ge d_3(d_1+d_2+d_3)\ge d_1d_2+d_1d_3+d_2d_3=\sigma,
\]
because $d_3^2\ge d_1d_2$. Hence $\sigma-eD\le0$, and the desired inequality
follows from $\deg(R/J)\ge0$.

Now suppose that exactly two coordinates of $\nu$ are nonzero, say the $i$-th
and $j$-th coordinates, and let $\{i,j,k\}=\{1,2,3\}$. Put
$c=\deg\gcd(f_i,f_j)$. Then the primitive Koszul syzygy between $f_i$ and
$f_j$ has shifted degree $d_i+d_j-c$. Since $e$ is the initial syzygy degree,
we must have
\[
e=d_i+d_j-c.
\]
Write $f_i=gu$ and $f_j=gv$, where $g=\gcd(f_i,f_j)$ and $\deg g=c$. When $c = 0$, then $g$ is a unit, so we get $(g, f_k) = R$ and the needed inequality is simply $\deg(R/J) \geq 0$, so it follows. Thus, we may assume $c > 0$.

Since
$\gcd(f_1,f_2,f_3)=1$, we have $\gcd(g,f_k)=1$. Thus $(g,f_k)$ is a complete
intersection of degrees $c$ and $d_k$, and so
\[
\deg(R/(g,f_k))=cd_k.
\]
Moreover $J\subseteq (g,f_k)$, hence
\[
\deg(R/J)\ge \deg(R/(g, f_k)) = cd_k.
\]
On the other hand,
\[
\sigma-eD
=cd_k+d_id_j-(d_i+d_j-c)(d_i+d_j)\le cd_k,
\]
because $c\le\min\{d_i,d_j\}$. Thus
\[
\deg(R/J)\ge \sigma-eD.
\]
\end{proof}

The next result packages the basic numerical restrictions on the initial syzygy degree and the Bourbaki degree, together with the sharpness case and the fixed-gap degree patterns.
\begin{Proposition}\label{prop:numerical-bounds-non-equigenerated}
Let $J=(f_1,f_2,f_3)\subseteq R=k[x_1,\ldots,x_n]$ be minimally generated by homogeneous forms of degrees $1\le d_1\le d_2\le d_3$, with $\gcd(f_1,f_2,f_3)=1$, and let $e=\indeg(\syz(J))$ in the shifted grading. Set $c_{12}=\deg(\gcd(f_1,f_2))$. Then:
\begin{enumerate}
\item[\rm(i)] One has $d_2<e\le d_1+d_2-c_{12}\le d_1+d_2$. Moreover, if $e\le d_3$, then $e=d_1+d_2-c_{12}$.

\item[\rm(ii)] One has
\[
\Bour(J)\le e^2-eD+\sigma\le d_1d_3.
\]
If $d_2<d_3$, then the second inequality is strict; hence $\Bour(J)<d_1d_3$.

\item[\rm(iii)] The upper bound is sharp precisely in the complete intersection case with $d_2=d_3$. More precisely, $\Bour(J)=d_1d_3$ if and only if $d_2=d_3$ and $J$ is a complete intersection.

\item[\rm(iv)] If $d_1=1<d_2\le d_3$, then, after a linear change of coordinates and replacing the other two generators modulo $x_1$, $J=(x_1,g,h)$ with $g,h\in k[x_2,\ldots,x_n].$
Let $\ell=\gcd(g,h)$, and write $g=\ell g_1$, $h=\ell h_1$, with $\gcd(g_1,h_1)=1$. Then $R/J$ has a minimal graded free resolution
\small{\[
0\to R(-(d_2+d_3-\deg\ell+1))
\xrightarrow{\ \phi_2\ }
\begin{matrix}
R(-(d_2+d_3-\deg\ell))\\
\oplus\\
R(-(d_3+1))\\
\oplus\\
R(-(d_2+1))
\end{matrix}
\xrightarrow{\ \phi_1\ }
\begin{matrix}
R(-1)\\
\oplus\\
R(-d_2)\\
\oplus\\
R(-d_3)
\end{matrix}
\xrightarrow{\ \phi_0\ }
R\to R/J\to0.
\]}
In particular,
\[
\deg(R/J)=\deg\ell,
\qquad
e=d_2+1.
\]
Moreover, $e\le d_2+d_3-\deg(R/J)$. 

\item[\rm(v)] Suppose $(d_1,d_2,d_3)=(d,d+c_1,d+c_2)$, where $d\ge1$ and $0\le c_1\le c_2$. Write $e=d+c_1+s$. Then $1\le s\le d-c_{12}\le d$, and
\[
\Bour(J)+\deg(R/J)=d(d+c_2)-s(d+c_2-c_1-s).
\]
In particular,
\[
\Bour(J)\le d(d+c_2)-s(d+c_2-c_1-s)\le d(d+c_2).
\]
Moreover, if $s\le c_2-c_1$, then $s=d-c_{12}$.
\item[\rm(vi)] If $d_1=2\le d_2\le d_3$, then $e\in\{d_2+1,d_2+2\}$ and
\[
\Bour(J)+\deg(R/J)=
\begin{cases}
d_2+d_3-1, & \text{if } e=d_2+1,\\
2d_2, & \text{if } e=d_2+2.
\end{cases}
\]
If $d_3\ge d_2+1$, then $e=d_2+1$ if and only if $\deg(\gcd(f_1,f_2))=1$.
\end{enumerate}
\end{Proposition}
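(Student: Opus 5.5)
The plan is to prove (i) by locating the Koszul syzygies, then read (ii)--(vi) off the Bourbaki formula \eqref{eq:Bourbaki-formula-non-equigen} together with Proposition~\ref{prop:basic-numerical-bound}.

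\emph{Part (i).} The primitive Koszul syzygy $(f_2/g,-f_1/g,0)$ with $g=\gcd(f_1,f_2)$ has shifted degree $d_1+d_2-c_{12}$. This gives $e\le d_1+d_2-c_{12}$. For the lower bound, let $\nu=(a_1,a_2,a_3)$ be a nonzero syzygy of degree $e$; we know at least two of its coordinates are nonzero.
\begin{itemize}
\item If $a_3\neq0$, then $e\ge d_3\ge d_2$.
\item If $a_3=0$, then $a_1f_1=-a_2f_2$. Minimality of $f_2$ as a generator forbids $f_1\mid f_2$ up to the relevant degree, so $e-d_1\ge d_2-c_{12}$, giving $e=d_1+d_2-c_{12}\ge d_2+1$ unless $c_{12}=d_1$. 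But $c_{12}=d_1$ means $f_1\mid f_2$, contradicting minimal generation.
\item To exclude $e=d_2$ with $a_3\ne0$: then $e=d_3=d_2$ and $a_3$ is a constant, so $f_3\in(f_1,f_2)$, again contradicting minimality.
\end{itemize}
Hence $e>d_2$ in all cases. If moreover $e\le d_3$, then either $a_3=0$ (handled above) or $e=d_3$ with $a_3$ a nonzero constant, which is impossible. So $e=d_1+d_2-c_{12}$.

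\emph{Part (ii).} Consider $q(e)=e^2-eD+\sigma$ on the interval $d_2<e\le d_1+d_2$. It is a convex quadratic, so its maximum over the interval is attained at an endpoint. We have $q(d_2)=d_1d_3$ and $q(d_1+d_2)=d_1d_2\le d_1d_3$, so $q(e)\le d_1d_3$. Strictness when $d_2<d_3$ follows because $e>d_2$ excludes the endpoint $d_2$, and at $e=d_1+d_2$ we get $q=d_1d_2<d_1d_3$.

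\emph{Part (iii).} Equality $\Bour(J)=d_1d_3$ forces $\deg(R/J)=0$ and $q(e)=d_1d_3$.
\begin{itemize}
\item By (ii), $q(e)=d_1d_3$ requires $d_2=d_3$ and $e=d_1+d_2$. So $c_{12}=0$ (and similarly for the other pairs), which is where $J$ is forced to have height three.
\item $\deg(R/J)=0$ means $\hht J\ge3$, so $J$ is a complete intersection.
\item Conversely, if $J$ is a complete intersection with $d_2=d_3$, then $e=d_1+d_2$ and $\deg=0$.
\end{itemize}

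\emph{Part (iv).} After the coordinate change, $R/J=S/(g,h)$ with $S=k[x_2,\ldots,x_n]$ tensored with $R/(x_1)$. The ideal $(g,h)=\ell(g_1,h_1)$ has a resolution
\[
0\to S(-(d_2+d_3-\deg\ell))\to S(-d_2)\oplus S(-d_3)\to(g,h)\to0.
\]
Taking the mapping cone with the Koszul complex on $x_1$ gives the displayed resolution. Reading it off:
\begin{itemize}
\item $\deg(R/J)=\deg\ell$, since $(g_1,h_1)$ has height two and contributes nothing in codimension two.
\item $e=\min(d_2+1,\,d_2+d_3-\deg\ell)=d_2+1$, using $\deg\ell<d_2$ (otherwise $g\mid h$, contradicting minimality).
\item The last inequality $e\le d_2+d_3-\deg(R/J)$ is then immediate.
\end{itemize}

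\emph{Part (v).} Substitute $(d_1,d_2,d_3)=(d,d+c_1,d+c_2)$ and $e=d+c_1+s$ into \eqref{eq:Bourbaki-formula-non-equigen} and simplify; this is direct algebra. The range $1\le s\le d-c_{12}$ is (i) rewritten. The last claim also comes from (i): $s\le c_2-c_1$ is the same as $e\le d_3$.

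\emph{Part (vi).} Here (i) gives $e\in\{d_2+1,d_2+2\}$ since $c_{12}\le1$ (as $c_{12}<d_1=2$). Plugging $s=1$ and $s=2$ into (v) yields the two stated values. For the final equivalence, assume $d_3\ge d_2+1$:
\begin{itemize}
\item If $e=d_2+1\le d_3$, then (i) forces $e=d_1+d_2-c_{12}$, so $c_{12}=1$.
\item Conversely, if $c_{12}=1$, the Koszul syzygy has degree $d_2+1$, so $e=d_2+1$.
\end{itemize}

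\emph{Main obstacle.} The delicate step is the lower bound $e>d_2$ in (i), in particular excluding low-degree syzygies involving $a_3$ when $d_2=d_3$. This rests entirely on minimality of the generators, and I would double-check the case where $a_3$ is a constant.
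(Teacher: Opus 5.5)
Your proposal is correct and follows the paper's proof essentially step for step. The only differences are cosmetic: in (ii) you use strict convexity of $q$ together with $q(d_2)=d_1d_3$ and $q(d_1+d_2)=d_1d_2$, where the paper uses the factorization $F(t)-d_1d_3=(t-d_2)(t-d_1-d_3)$; and in (iv) you obtain the resolution by tensoring the Hilbert--Burch resolution of $(g,h)=\ell(g_1,h_1)$ with the Koszul complex on $x_1$ (exact because $x_1$ is regular modulo $(g,h)$), rather than through the Buchsbaum--Eisenbud criterion and a Hilbert series computation.

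Two justifications should be tidied up. In (i), when $a_3=0$ the bound $e-d_1\ge d_2-c_{12}$ comes from unique factorization (the cofactor $f_2/\gcd(f_1,f_2)$ divides $a_1$), and minimality is needed only for $c_{12}<d_1$. In (iii), height three follows from $\deg(R/J)=0$, not from $c_{12}=0$.
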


\begin{proof}
Let $\nu=(a_1,a_2,a_3)$ be a nonzero homogeneous syzygy of shifted degree $e$. At least two of the $a_i$ are nonzero, and for each nonzero $a_i$ one has $\deg(a_i)=e-d_i\ge0$. Hence $e\ge d_2$. Equality cannot occur: if $e=d_2$, then the syzygy has a nonzero constant coordinate on a generator of degree $d_2$ and no nonzero coordinate on any generator of larger degree, forcing one generator to lie in the ideal generated by the others, contrary to minimality. Thus $d_2<e$.

The Koszul relation between $f_1$ and $f_2$ has shifted degree $d_1+d_2-c_{12}$, so $e\le d_1+d_2-c_{12}$. If $e\le d_3$, then the third coordinate of any syzygy of shifted degree $e$ has degree $e-d_3\le0$. If it is a nonzero constant, then $f_3\in(f_1,f_2)$, contradicting minimality; hence it is zero. Thus the syzygy comes from the pair $(f_1,f_2)$, and $e=d_1+d_2-c_{12}$. This proves \rm(i).

Using the formula \eqref{eq:Bourbaki-formula-non-equigen} and since $\deg(R/J)\ge0$, one has $\Bour(J)\le e^2-eD+\sigma$. Set $F(t)=t^2-tD+\sigma$, so that
$$
F(t) - d_1 d_3 = (t-d_2)(t-d_1-d_3).
$$
By part $\mathrm{(i)}$, $e-d_2 > 0$, $e-d_1-d_3 \leq d_2 - d_3 \leq 0$, therefore $F(e) \leq d_1d_3$. Whenever $d_2 < d_3$, the second factor is strictly negative, so $F(e) < d_1 d_3$, and this shows \rm(ii).

For \rm(iii), if $d_2=d_3$ and $J$ is a complete intersection, then $\deg(R/J)=0$ by our convention and $e=d_1+d_2$, so $\Bour(J)=e^2-eD+\sigma=d_1d_2=d_1d_3$. Conversely, if $\Bour(J)=d_1d_3$, then equality holds in $\Bour(J)\le F(e)\le d_1d_3$. By \rm(ii), this forces $d_2=d_3$ and $\deg(R/J)=0$. Since $\gcd(f_1,f_2,f_3)=1$, our degree convention gives $\hht(J)=3$. As $J$ is generated by three elements in the Cohen--Macaulay ring $R$, it is a complete intersection.

For {\rm(iv)}, consider
\[
\phi_0=\begin{pmatrix}x_1&g&h\end{pmatrix}, \qquad \phi_1=
\begin{pmatrix}
0 & h & -g\\
-h_1 & 0 & x_1\\
g_1 & -x_1 & 0
\end{pmatrix},
\qquad
\phi_2=
\begin{pmatrix}
x_1\\
g_1\\
h_1
\end{pmatrix}.
\]
One checks directly that $\phi_0\phi_1=0$ and $\phi_1\phi_2=0$. Thus we have a
complex
\[
0\to R(-(d_2+d_3-\deg\ell+1))
\xrightarrow{\ \phi_2\ }
\begin{matrix}
R(-(d_2+d_3-\deg\ell))\\
\oplus\\
R(-(d_3+1))\\
\oplus\\
R(-(d_2+1))
\end{matrix}
\xrightarrow{\ \phi_1\ }
\begin{matrix}
R(-1)\\
\oplus\\
R(-d_2)\\
\oplus\\
R(-d_3)
\end{matrix}
\xrightarrow{\ \phi_0\ }
R\to R/J\to0.
\]
It is straightforward to check that 
$$
\hht(I_1(\varphi_0)) \geq 1, \quad \hht(I_2(\varphi_1)) \geq 2, \quad \hht(I_1(\varphi_2)) \geq 3.
$$
Thus the complex is exact by the Buchsbaum--Eisenbud acyclicity criterion (see, for example, \cite[Theorem~1.4.13]{bruns1998cohen}); it is minimal because all entries have positive degree.

The Hilbert numerator of $R/J$ is therefore
\[
1-t-t^{d_2}-t^{d_3}
+t^{d_2+d_3-\deg\ell}+t^{d_3+1}+t^{d_2+1}
-t^{d_2+d_3-\deg\ell+1}.
\]
Taking the second derivative at $t=1$ and dividing by $2$, we obtain $\deg(R/J)=\deg\ell$.
The first syzygy degrees are
\[
d_2+d_3-\deg\ell,\qquad d_3+1,\qquad d_2+1.
\]
Since minimality of $J$ gives $\deg\ell<d_2$, the smallest of these degrees is
$d_2+1$. Hence
\[
e=\indeg(\syz(J))=d_2+1.
\]
Finally,
\[
e=d_2+1\le d_2+d_3-\deg\ell
=d_2+d_3-\deg(R/J),
\]
because $\deg\ell<d_2\le d_3$.

For \rm(v), write $d_1=d$, $d_2=d+c_1$, $d_3=d+c_2$, and $e=d+c_1+s$. By \rm(i), $1\le s\le d-c_{12}\le d$. Since $D=3d+c_1+c_2$ and $\sigma=3d^2+2d(c_1+c_2)+c_1c_2$, substituting $e=d+c_1+s$ into $e^2-eD+\sigma$ gives
\[
e^2-eD+\sigma=d(d+c_2)-s(d+c_2-c_1-s).
\]
The displayed equality follows from Theorem~\ref{Bour3Gens}. The inequalities follow from $\deg(R/J)\ge0$ and item \rm(ii). Finally, if $s\le c_2-c_1$, then $e=d+c_1+s\le d+c_2=d_3$, so \rm(i) gives $e=d_1+d_2-c_{12}=2d+c_1-c_{12}$, equivalently $s=d-c_{12}$.

Finally, if $d_1=2$, then item~\rm(i) gives $d_2<e\le d_2+2$, hence $e=d_2+1$ or $e=d_2+2$. Substituting these two values into $e^2-eD+\sigma$, with $D=2+d_2+d_3$ and $\sigma=2d_2+2d_3+d_2d_3$, gives the displayed formula. If $d_3\ge d_2+1$, then $e=d_2+1\le d_3$, so item~\rm(i) gives $e=d_1+d_2-c_{12}=d_2+2-c_{12}$. Hence $e=d_2+1$ if and only if $c_{12}=1$, that is, if and only if $\deg(\gcd(f_1,f_2))=1$.
\end{proof}

The following proposition extends the characterization of small and maximal Bourbaki degree in \cite[Theorem 3.2]{MFA} to the non-equigenerated context.

\begin{Proposition}\label{prop:not-equigen-Bour-consequences}
Let $J=(f_1,f_2,f_3)\subseteq R=k[x_1,\ldots,x_n]$ be minimally generated by homogeneous forms of degrees $1\le d_1\le d_2\le d_3$, with $\gcd(f_1,f_2,f_3)=1$, and let $e=\indeg(\syz(J))$ in the shifted grading. Let $\nu$ be a minimal syzygy of degree $e$, and if $J$ is not perfect of codimension $2$, let $I_\nu$ be the corresponding Bourbaki ideal. Then:
\begin{enumerate}
\item[\rm(i)] $\Bour(J)=0$ if and only if $J$ is a perfect ideal of codimension $2$.
\item[\rm(ii)] $\Bour(J)=1$ if and only if $I_\nu$ is a complete intersection of two linear forms.
\item[\rm(iii)] $\Bour(J)=2$ if and only if $I_\nu$ is one of the following:
\begin{enumerate}
\item[\rm(a)] $I_\nu=\fp_1\cap\fp_2$, where $\fp_1,\fp_2$ are distinct height-two linear prime ideals;
\item[\rm(b)] $I_\nu=\fp$ is a height-two prime ideal with $\deg(R/\fp)=2$; equivalently, after a linear change of coordinates, $I_\nu=(\ell,q)$, where $\ell$ is linear and $q$ is quadratic, with $\bar q$ irreducible in $R/(\ell)$;
\item[\rm(c)] $I_\nu$ is $\fp$-primary for some height-two linear prime $\fp$ and $\length((R/I_\nu)_\fp)=2$.
\end{enumerate}
\item[\rm(iv)] If $d_2=d_3=d$, then $\Bour(J)=d_1d$ if and only if $J$ is a complete intersection.
\item[\rm(v)] If $d_2=d_3=d$, then $\Bour(J)=d_1d-1$ if and only if $e=d_1+d$ and $\deg(R/J)=1$.
\end{enumerate}
\end{Proposition}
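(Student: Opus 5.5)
Items (i)--(iii) are statements about the unmixed height-two ideal $I_\nu$ of Theorem~\ref{Bour3Gens}(b), and the plan is to treat them as a classification of unmixed height-two homogeneous ideals of small degree. Item (i) is contained in Theorem~\ref{Bour3Gens}(a),(b) together with Definition~\ref{defi:Bourbaki-degree-non-equigen}. If $J$ is perfect of height two, the Bourbaki degree is $0$ by convention. Otherwise $I_\nu$ is a proper unmixed ideal of height two, so $\deg(R/I_\nu)\ge1$.

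For (ii) and (iii) I will use the associativity formula. Since $I_\nu$ is unmixed of height two,
\[
\deg(R/I_\nu)=\sum_{\fp\in\Ass(R/I_\nu)}\length_{R_\fp}\bigl((R/I_\nu)_\fp\bigr)\deg(R/\fp),
\]
and every associated prime is minimal of height two. If the degree is $1$, there is a single prime $\fp$ with $\deg(R/\fp)=1$ and length one. A height-two prime of degree one is generated by two linear forms, and length one together with the absence of embedded components gives $I_\nu=\fp$.

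If the degree is $2$, the formula allows exactly three patterns:
\begin{itemize}
\item two distinct linear primes, each of length one; unmixedness then gives $I_\nu=\fp_1\cap\fp_2$, which is case (a);
\item one prime of degree two and length one, so $I_\nu=\fp$;
\item one linear prime of length two, which is case (c).
\end{itemize}
In the middle case, a height-two prime of degree two defines an irreducible nondegenerate-free variety of degree two and codimension two. Such a variety lies in a hyperplane, since a variety of degree $d$ spans a linear space of codimension at most $d-1$. It is therefore a quadric hypersurface inside a hyperplane $\ell=0$, so $\fp=(\ell,q)$ with $\bar q$ irreducible. The converse directions follow by computing the degree from the same formula.

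For (iv) and (v), set $d_2=d_3=d$. Proposition~\ref{prop:numerical-bounds-non-equigenerated}(iii) gives (iv) directly. For (v), use the factorization
\[
F(e)-d_1d=(e-d)(e-d_1-d)
\]
from the proof of part (ii) of that proposition, where $d<e\le d_1+d$. Then $\Bour(J)=F(e)-\deg(R/J)$ equals $d_1d-1$ exactly when
\[
(e-d)(d_1+d-e)+\deg(R/J)=1.
\]
Both summands are nonnegative integers, and the first vanishes only if $e=d_1+d$. If instead the first summand equals $1$, then $e-d=1$ and $d_1+d-e=1$, so $d_1=2$ and $\deg(R/J)=0$.

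The main obstacle is ruling out this subcase. Here $\deg(R/J)=0$ means $J$ is a complete intersection, so $\syz(J)$ is generated by Koszul syzygies. The smallest of these has shifted degree $d_1+d_2=d+2$, which contradicts $e=d+1$. The Koszul syzygies are minimal generators because $J$ is minimally generated by a regular sequence, so $\indeg(\syz(J))=d_1+d$ in that case. Hence only $e=d_1+d$ and $\deg(R/J)=1$ survive, and the converse is immediate from the formula.
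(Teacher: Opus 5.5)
Your proof is correct. For (i), (iv) and (v) you follow the paper. Parts (i) and (iv) are read off from Theorem~\ref{Bour3Gens} and Proposition~\ref{prop:numerical-bounds-non-equigenerated}(iii). In (v) you use the same factorization $F(e)-d_1d=(e-d)(e-d_1-d)$. The only variation is how you exclude $\deg(R/J)=0$: you observe that a complete intersection has $e=d_1+d_2=d+2\neq d+1$, whereas the paper appeals to (iv) to get $\Bour(J)=d_1d\neq d_1d-1$. These are equivalent one-line arguments.

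The genuine difference is in (ii) and (iii). The paper simply quotes the classification of unmixed height-two ideals of degree $1$ and $2$ from \cite[Theorem~3.2]{MFA}. You derive it instead, in three steps. The associativity formula limits the possible (length, degree) data of the height-two associated primes. Unmixedness lets you recover $I_\nu$ from its primary components. The bound $\deg X\ge \codim X+1$ for irreducible nondegenerate varieties puts a degree-two prime in the form $(\ell,q)$. Your route makes the proposition self-contained at the cost of a few lines, while the citation is shorter.

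One phrasing should be fixed. The fact you need is that an irreducible variety of degree $d$ has codimension at most $d-1$ \emph{inside its linear span}. It is not true that its span has codimension at most $d-1$ in the ambient space; a line is a counterexample. The word ``nondegenerate-free'' should be dropped. You might also note that for $n=3$ case (b) is vacuous, since a height-two prime of $k[x,y,z]$ defines a point, which has degree one; your argument is consistent with this.
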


\begin{proof}
Assertion~{\rm(i)} follows directly from
Definition~\ref{defi:Bourbaki-degree-non-equigen} and
Theorem~\ref{Bour3Gens}. 

For (ii) and (iii), assume $J$ is not perfect of height two. By Theorem~\ref{Bour3Gens}{\rm(b)}, the ideal $I_\nu$ is unmixed of height $2$ and $\Bour(J)=\deg(R/I_\nu)$. The assertions are then exactly the standard classification of unmixed codimension-two ideals of degree $1$ and $2$; see \cite[Theorem~3.2]{MFA}. The claim in (iv) follows from Proposition~\ref{prop:numerical-bounds-non-equigenerated} item (iii).

It remains to prove (v). Assume $d_2=d_3=d$, and again we set 
\begin{align*}
F(t)&= t^2 - t D + \sigma = t^2-t(d_1+2d)+2d_1d+d^2,
\end{align*}
so that $F(t) - d_1d = (t-d)(t-d_1-d)$. Since $d < e \leq d + d_1$, one has $F(e) \leq d_1 d$, with equality if and only if $e = d_1+d$. If $e=d_1+d$ and $\deg(R/J)=1$, then $F(e)=d_1d$ and hence $\Bour(J)=d_1d-1$. Conversely, assume $\Bour(J)=d_1d-1$. Since $F(e)\le d_1d$, we get $\deg(R/J)=F(e)-\Bour(J)\le1$. If $\deg(R/J)=0$, then the argument in (iv) gives that $J$ is a complete intersection, and thus $\Bour(J)=d_1d$, a contradiction. Thus $\deg(R/J)=1$. It follows that $F(e)=d_1d$, and therefore $e=d_1+d$.
\end{proof}

The case {\rm(v)} generalizes \cite[Theorem~3.8]{MAA} from equigenerated gradient ideals to the present non-equigenerated setting. In particular, when $d_1=d_2=d_3$, $n=3$ and $J_F=(\partial_1F,\partial_2F,\partial_3F)$ is a gradient ideal of an irreducible projective plane curve $V(F) \subseteq \PP^2$, item {\rm(v)} occurs precisely whenever $V(F)$ has a unique nodal singularity. 

The following construction shows that the value $\Bour(J) = d_1 d - 1$ is attained for every degree type $\textbf{d} = (d_1, d, d)$ with $2 \leq d_1 < d$, in fact by a non-empty Zariski open family of three-generated ideals.

\begin{Example}\label{ex:next-extremal-non-equigenerated}
		Let $R=k[x_1,\ldots,x_n]$ be a standard graded polynomial ring over a field $k$ with $n\geq 3$. For integers $2\leq d_1<d$ let $J\subseteq R$ be the ideal generated by the forms 
		$$f_1=A_1x_1-B_1x_2,\quad f_2=A_2x_1-B_2x_2,\quad f_3=A_3x_1-B_3x_2$$
		 for general forms $A_1,B_1\in R_{d_1-1}$ and $A_2,A_3,B_2,B_3\in R_{d-1}.$ We claim that the minimal graded free resolution of $R/J$ is
		\begin{equation}\label{resd1d}
		0\to R(-(2d+d_1-1))^2
		\xrightarrow{\ \Psi\ }
		\begin{matrix}
			R(-2d)\\
            \oplus\\
            R(-(d_1+d))^2\\
            \oplus \\
            R(-(2d+d_1-2))
		\end{matrix}
		\xrightarrow{\ \Phi\ }
		\begin{matrix}
			R(-d_1)\\
			\oplus\\
			R(-d)^2
		\end{matrix}
		\xrightarrow{\ [f_1\ f_2\ f_3]\ }
		R\to R/J\to0.
		\end{equation}
		where
		\[\Phi=\begin{bmatrix}
			0&-f_3&-f_2&A_3B_2-A_2B_3\\
			-f_3&0&f_1&-A_3B_1+A_1B_3\\
			f_2&f_1&0&A_2B_1-A_1B_2
		\end{bmatrix}\quad\mbox{and}\quad
\Psi=	\begin{bmatrix}
		-A_1&-B_1\\
		A_2&B_2\\
		-A_3&-B_3\\
		x_2&x_1
	\end{bmatrix}
	 \]
	 For this, consider the polynomial ring $T=R[y_1,y_2,y_3,z_1,z_2,z_3]$ with the grading  $$\deg y_1=\deg z_1=d_1-1,\quad \deg y_2=\deg y_3=\deg z_2=\deg z_3=d-1.$$  Let $\mathbb{J}$ be the height two ideal  of $T$ generated by 
	 $$h_1=x_1y_1-x_2z_1,\qquad
h_2=x_1y_2-x_2z_2,\qquad
h_3=x_1y_3-x_2z_3.$$ A straightforward calculation gives that the sequence 
	 \begin{equation}\label{bbJ}
	 	0\to T^2\stackrel{\begin{bmatrix}
	 		-y_1&-z_1\\
	 		y_2&z_2\\
	 		-y_3&-z_3\\
	 		x_2&x_1
	 \end{bmatrix}}{\longrightarrow} T^4\stackrel{\begin{bmatrix}
	 0&-h_3&-h_2&y_3z_2-y_2z_3\\
	 -h_3&0&h_1&-y_3z_1+y_1z_3\\
	 h_2&h_1&0&y_2z_1-y_1z_2
 \end{bmatrix}}{\longrightarrow} T^3\to T\to T/\mathbb{J}\to 0
\end{equation}
is a complex.  On the other hand, by the Buchsbaum-Eisenbud acyclicity theorem,  we have that this complex is a free resolution of $T/\mathbb{J}.$ Thus, the height of each associated prime of $T/\mathbb{J}$ is at most $3.$ With this and the fact that $(y_1,R_{d_1-1})$ is a $(d_1-1)$-equigenerated ideal of height $n+1\geq 4$ we have by the homogeneous prime avoidance lemma that for a general $A_1\in R_{d_1-1},$ the element $y_1-A_1$ is $T/\mathbb{J}$-regular. By induction, we can prove that $y_1-A_1,y_2-A_2,y_3-A_3,z_1-B_1,z_2-B_2,z_3-B_3$ is a $T/\mathbb{J}$-regular sequence. Thus,  tensoring \eqref{bbJ} by $T/(y_1-A_1,y_2-A_2,y_3-A_3,z_1-B_1,z_2-B_2,z_3-B_3)\simeq R$ we have that \eqref{resd1d} is a graded free resolution of $R/J,$ which is minimal because the entries of $\Phi$ and $\Psi$ are homogeneous polynomials of positive degree.

It follows from the minimal graded  free resolution of $R/J$ that the first syzygy degrees are
\[
d_1+d,\qquad d_1+d,\qquad 2d,\qquad 2d+d_1-2.
\]
Since \(2\le d_1<d\), the smallest of these is \(d_1+d\). Hence
\[
{\rm indeg}(\syz(J))=d_1+d.
\]
Furthermore, the Hilbert series numerator of \(R/J\) is
\[
1-t^{d_1}-2t^d+2t^{d_1+d}+t^{2d}+t^{2d+d_1-2}-2t^{2d+d_1-1}.
\]
Taking the second derivative at \(t=1\) and dividing by \(2\), we get
\[
\deg(R/J)=1.
\]
By Proposition~\ref{prop:not-equigen-Bour-consequences}{\rm(v)}, it follows
that ${\rm Bour}(J)=d_1d-1$. Thus the value $d_1d-1$ occurs for every $2\le d_1<d$.
		\end{Example}

\begin{Remark}\label{rem:syz-locally-free-and-Bourbaki-CM}
Let $R=k[x_1,\ldots,x_n]$ and let $J=(f_1,f_2,f_3)\subseteq R$ be minimally generated by homogeneous forms, with $\gcd(f_1,f_2,f_3)=1$.
\begin{enumerate}
\item[\rm(i)] The module $\syz(J)$ is locally free on $\Spec(R)\setminus\{\fm\}$ if and only if, for every prime $\fp \neq\fm$ containing $J$, the ideal $J_\fp$ has height $2$ and $R_\fp/J_\fp$ is Cohen--Macaulay. Indeed, if $\fp \not\supset J$, then $J_\fp=R_\fp$, and hence $\syz(J)_\fp$ is free. Now let $\fp \supset J$. Since $\gcd(f_1,f_2,f_3)=1$, one has $\height(J_\fp)\geq2$. Localizing the presentation of $J$ gives
\[
0\lar\syz(J)_\fp\lar R_\fp^3\lar J_\fp\lar0,
\]
so $\syz(J)_\fp$ is free if and only if $\pd_{R_\fp}(J_\fp)\leq1$, or equivalently $\pd_{R_\fp}(R_\fp/J_\fp)\leq2$. Since $R_\fp$ is regular, the Auslander--Buchsbaum formula, together with $\height(J_\fp)\geq2$, shows that this occurs if and only if $J_\fp$ has height $2$ and $R_\fp/J_\fp$ is Cohen--Macaulay.

\item[\rm(ii)] Keep the notation of Theorem~\ref{Bour3Gens}, and assume that $J$ is not a perfect ideal of codimension $2$. Let $I_\nu$ be the Bourbaki ideal determined by
\[
M_\nu=\syz(J)/R(-e)\nu\simeq I_\nu(e-D).
\]
If $\syz(J)$ is locally free on $\Spec(R)\setminus\{\fm\}$, then $R/I_\nu$ is locally Cohen--Macaulay on the punctured spectrum. This is the same argument as in \cite[Proposition~2.5]{MFA}. Indeed, for every prime $\fp \neq\fm$ containing $I_\nu$, localization of the Bourbaki sequence gives
\[
0\lar R_\fp\lar\syz(J)_\fp\lar(I_\nu)_\fp\lar0.
\]
Since $\syz(J)_\fp$ is free, it follows that $\pd_{R_\fp}((I_\nu)_\fp)\leq1$, and hence $\pd_{R_\fp}(R_\fp/(I_\nu)_\fp)\leq2$. Moreover, $I_\nu$ is unmixed of height $2$ by Theorem~\ref{Bour3Gens}{\rm(b)}. Thus $(I_\nu)_\fp$ has height $2$, and the Auslander--Buchsbaum formula shows that $R_\fp/(I_\nu)_\fp$ is Cohen--Macaulay.
\end{enumerate}
\end{Remark}

The following example shows that the conclusion of the Remark~\ref{rem:syz-locally-free-and-Bourbaki-CM} is genuinely local: even when $\syz(J)$ is locally free on the punctured spectrum, the associated Bourbaki quotient need not be Cohen--Macaulay.

\begin{Example}\label{ex:Bourbaki-skew-lines-not-CM}
Let $R=k[x_0,x_1,x_2,x_3]$, with $\operatorname{char}k=0$, and set
\[
f_1=x_0x_2+x_1x_3,\qquad
f_2=x_0x_3+x_1x_2,\qquad
f_3=x_0x_2+x_0x_3+x_1x_2.
\]
Let $J=(f_1,f_2,f_3)$. Then $J$ is minimally generated by three quadrics and
$\gcd(f_1,f_2,f_3)=1$. A direct check gives the following minimal graded free
resolution of $R/J$:
\[
0\to R(-6)\xrightarrow{\ \gamma\ }R(-5)^4\xrightarrow{\ \beta\ }R(-4)^5
\xrightarrow{\ \varphi\ }R(-2)^3\xrightarrow{\ [\,f_1\ f_2\ f_3\,]\ }R
\to R/J\to0,
\]
where
\[
\varphi=
\begin{pmatrix}
-f_2 & -x_2^2 & -f_2 & -f_3 & -x_0^2\\
f_1 & -x_2^2+x_2x_3+x_3^2 & -x_0x_3-x_1x_2+x_1x_3 & 0
& -x_0^2+x_0x_1+x_1^2\\
0 & x_2^2-x_3^2 & f_2 & f_1 & x_0^2-x_1^2
\end{pmatrix}.
\]
Let $\nu$ be the first column of $\varphi$, namely
\[
\nu=(-f_2,f_1,0).
\]
Thus $\nu$ is a minimal syzygy of shifted degree $e=4$. Since here
$d_1=d_2=d_3=2$, we have $D=6$. By Theorem~\ref{Bour3Gens}{\rm(c)}, the Bourbaki ideal $I_\nu$ has a graded free resolution of the form 
\[
0\to R(-4)\xrightarrow{\ \gamma\ }R(-3)^4\xrightarrow{\ \Phi\ }R(-2)^4
\to I_\nu\to0,
\]
where
\[
\Phi=
\begin{pmatrix}
x_1 & -x_0 & 0 & 0\\
-x_2 & -x_3 & -x_0 & x_1\\
x_3 & x_2 & x_1 & -x_0\\
0 & 0 & x_3 & x_2
\end{pmatrix}.
\]
We now identify this ideal. The row vector
\[
(h_1,h_2,h_3,h_4)
=
\bigl(
x_2^2-x_3^2,\,
x_0x_3+x_1x_2,\,
x_0x_2+x_1x_3,\,
x_0^2-x_1^2
\bigr)
\]
satisfies
\[
(h_1,h_2,h_3,h_4)\Phi=0.
\]
Therefore the above resolution identifies $I_\nu$ with the ideal $I=(h_1,h_2,h_3,h_4)$. After the linear change of variables
\[
x=x_0+x_1,\qquad y=x_0-x_1,\qquad z=x_2+x_3,\qquad w=x_2-x_3,
\]
we have
\[
h_1=zw,\qquad h_4=xy,\qquad 2h_2=xz-yw,\qquad 2h_3=xz+yw.
\]
Since $\operatorname{char}k=0$, it follows that
\[
I_\nu=I=(x,w)\cap(y,z).
\]
Thus $I_\nu$ is the defining ideal of the union of the two skew lines
$V(x,w)$ and $V(y,z)$ in $\PP^3$.

In particular, $R/I_\nu$ is locally Cohen--Macaulay on
$\Spec(R)\setminus\{\fm\}$, but it is not Cohen--Macaulay. 
\end{Example}

\subsection{Admissible and realizable pairs}

We now use the Bourbaki formula to organize the possible pairs
\[
\bigl(\indeg(\syz(J)),\deg(R/J)\bigr),
\]
where the initial degree of $\syz(J)$ is taken in the shifted grading. Fix
integers $1\le d_1\le d_2\le d_3$, and set
\[
\mathbf d=(d_1,d_2,d_3),\qquad
D=d_1+d_2+d_3,\qquad
\sigma=d_1d_2+d_1d_3+d_2d_3.
\]
For an integer $e$, define
\[
F_{\mathbf d}(e)=e^2-eD+\sigma.
\]
Thus, if $J=(f_1,f_2,f_3)$ has degree vector $\mathbf d$ and
$e=\indeg(\syz(J))$, then Theorem~\ref{Bour3Gens} gives
\[
\Bour(J)+\deg(R/J)=F_{\mathbf d}(e).
\]

By Proposition~\ref{prop:numerical-bounds-non-equigenerated}{\rm(i)}, every
possible initial syzygy degree satisfies
\[
d_2<e\le d_1+d_2.
\]
Moreover, Proposition~\ref{prop:basic-numerical-bound} gives
\[
\sigma-eD\le\deg(R/J)\le F_{\mathbf d}(e).
\]
Since $\deg(R/J)\ge0$, it follows that, for each fixed
$d_2<e\le d_1+d_2$, one has
\[
\deg(R/J)\in I_{e,\mathbf d}:=
\bigl[\max\{\sigma-eD,0\},F_{\mathbf d}(e)\bigr]\cap\mathbb Z.
\]

A pair of integers $(e,j) \in \mathbb{Z}^2$ will be called \emph{admissible} for $\mathbf d$ if
\[
d_2<e\le d_1+d_2
\qquad\text{and}\qquad
j\in I_{e,\mathbf d}.
\]
Such a pair will be called \emph{realizable} over
$R=k[x_1,\ldots,x_n]$ if there exists an ideal
$J=(f_1,f_2,f_3)\subseteq R$, minimally generated by homogeneous forms with
$\deg(f_i)=d_i$ and $\gcd(f_1,f_2,f_3)=1$, such that
\[
\indeg(\syz(J))=e
\qquad\text{and}\qquad
\deg(R/J)=j.
\]
The next result shows that every integer in the numerically permitted range
$d_2<e\le d_1+d_2$ occurs as an initial syzygy degree by exhibiting one
realizable pair for each such $e$.

\begin{Theorem}\label{thm:every-e-occurs-structurally}
Assume $n\ge3$. Let $\mathbf d=(d_1,d_2,d_3)$ with
$1\le d_1\le d_2\le d_3$. For every integer
\[
d_2<e\le d_1+d_2,
\]
the pair $(e, (d_1 + d_2 - e)d_3)$ is realizable.
\end{Theorem}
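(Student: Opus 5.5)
We construct an explicit ideal. Set $c:=d_1+d_2-e$, so that $0\le c<d_1$. Work in $R=k[x_1,\ldots,x_n]$ with $n\ge 3$. Let $g=x_3^{c}$ (with $g=1$ if $c=0$). Take
\[
f_1=g\,x_1^{d_1-c},\qquad f_2=g\,x_2^{d_2-c},\qquad f_3=x_3^{d_3}+h,
\]
where $h$ is chosen so that $\gcd(g,f_3)=1$ and $f_3\notin(f_1,f_2)$. For example, take $f_3=x_1^{d_3}+x_2^{d_3}+x_3^{d_3}$, or a general form of degree $d_3$. Then $\gcd(f_1,f_2)=g$ has degree $c$, so $c_{12}=c$ and the Koszul syzygy $(f_2/g,-f_1/g,0)$ has shifted degree $d_1+d_2-c=e$. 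Minimality of the generators holds provided no $f_i$ lies in the ideal of the others. This is clear for $f_1,f_2$, since they are monomials involving distinct variables $x_1,x_2$ with positive exponents ($d_1-c\ge1$), and the other generators cannot produce them. For $f_3$ it holds because $f_3$ has a term not divisible by $x_3^c x_1$ or $x_3^cx_2$.

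Next we compute $\deg(R/J)$. Since $x_1^{d_1-c},x_2^{d_2-c},f_3$ are a regular sequence for general $f_3$, we have $(x_1^{d_1-c},x_2^{d_2-c},f_3)$ of height $3$. The codimension-two part of $J$ therefore comes only from $(g,f_3)=(x_3^c,f_3)$, because $J=(g,f_3)\cap(x_1^{d_1-c},x_2^{d_2-c},f_3)$ up to embedded/height-three components. I would verify this by localizing at height-two primes $\fp\supseteq J$: any such $\fp$ must contain $g$ (otherwise $J_\fp$ contains the height-three complete intersection), and at such $\fp$ the elements $x_1^{d_1-c},x_2^{d_2-c}$ are units (for general $f_3$, $V(x_3,f_3)$ avoids $V(x_1x_2)$ generically in codimension two). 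Hence $J_\fp=(g,f_3)_\fp$. So
\[
\deg(R/J)=\deg(R/(x_3^c,f_3))=c\,d_3=(d_1+d_2-e)d_3.
\]
When $c=0$ this gives $0$, consistent with $\hht(J)=3$.

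The main obstacle is showing $\indeg(\syz(J))=e$ exactly, i.e.\ that no syzygy of smaller shifted degree exists. By Proposition~\ref{prop:numerical-bounds-non-equigenerated}(i), any $e'<e$ satisfies $e'>d_2$. If $e'\le d_3$, then part (i) forces $e'=d_1+d_2-c_{12}=e$, a contradiction. So the remaining case is $d_3<e'<e$, which only occurs when $d_3<d_1+d_2-c$. In that case a syzygy $(a_1,a_2,a_3)$ has $a_3f_3\in(f_1,f_2)=g\,(x_1^{d_1-c},x_2^{d_2-c})$. Since $f_3$ is a nonzerodivisor modulo the ideal $g(x_1^{d_1-c},x_2^{d_2-c})$ (its associated primes are $(x_3)$ and $(x_1,x_2)$, and $f_3$ avoids both), we get $a_3\in(f_1,f_2)$. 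Subtracting a combination of Koszul syzygies then reduces to a syzygy of $(f_1,f_2)$, which is a multiple of the primitive Koszul syzygy of degree $e$. This shows all syzygies are generated in degrees $\ge \min(e, d_1+d_3, d_2+d_3)=e$, using $e\le d_1+d_2\le d_1+d_3$. Hence $\indeg(\syz(J))=e$, and the pair $(e,(d_1+d_2-e)d_3)$ is realized.

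Alternatively, and more cleanly, one can write down the full resolution of $R/J$ as in Proposition~\ref{prop:numerical-bounds-non-equigenerated}(iv), read both invariants off the Hilbert numerator, and confirm exactness with the Buchsbaum--Eisenbud criterion.
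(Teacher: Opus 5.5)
Your proof is correct. Your construction is the paper's up to relabeling the variables: the paper takes $f_1=x_1^cx_2^{d_1-c}$, $f_2=x_1^cx_3^{d_2-c}$, $f_3=(x_1+x_2+x_3)^{d_3}$, and you take the common factor $x_3^c$ with a Fermat-type third generator.

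Where you differ is in how the two invariants are verified.
\begin{itemize}
\item \emph{The paper's route.} It writes down the full length-three resolution and checks exactness with the Buchsbaum--Eisenbud criterion. It then reads $\deg(R/J)=cd_3$ off the Hilbert numerator and $\indeg(\syz(J))=d_1+d_2-c$ off the first syzygy shifts.
\item \emph{Your degree computation.} You get $\deg(R/J)$ from the associativity formula, by localizing at height-two primes. This works because every such prime is a minimal prime of $(x_3,f_3)$ and contains neither $x_1$ nor $x_2$.
\item \emph{Your syzygy computation.} You use that $f_3$ is a nonzerodivisor modulo $(f_1,f_2)=(x_3^c)\cap(x_1^{d_1-c},x_2^{d_2-c})$. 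This shows directly that $\syz(J)$ is generated by $(f_2/g,-f_1/g,0)$, $(f_3,0,-f_1)$ and $(0,f_3,-f_2)$, of shifted degrees $e$, $d_1+d_3$, $d_2+d_3$. These are, up to sign and relabeling, the columns of the paper's $\phi_1$.
\end{itemize}
Your approach avoids the acyclicity criterion and gives a transparent reason for the generators. The paper's approach produces the full graded Betti table, including the second syzygy, at no extra cost.

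Two small points.
\begin{itemize}
\item Your preliminary case split via Proposition~\ref{prop:numerical-bounds-non-equigenerated}(i) is unnecessary. That proposition concerns the initial degree, not an arbitrary $e'<e$, and in any case the nonzerodivisor argument already handles syzygies of every degree.
\item Your minimal-generation check is only sketched. When some $d_i$ coincide, you need a short linear-independence argument, which does go through for your choice of $f_3$.
\end{itemize}
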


\begin{proof}
Fix $0\le c<d_1$ and set $e=d_1+d_2-c$. Write
$a=d_1-c$, $b=d_2-c$, $r=d_3$. Let $L=x_1+x_2+x_3$ and consider
\[
J=(f_1,f_2,f_3)\subseteq R,\qquad
f_1=x_1^cx_2^a,\quad f_2=x_1^cx_3^b,\quad f_3=L^r.
\]
Then $\deg(f_i)=d_i$, $\gcd(f_1,f_2,f_3)=1$, and the three forms minimally
generate $J$. We claim that $R/J$ has the following minimal graded free resolution:
\begin{equation}\label{res-initial}
0\to R(-(d_1+d_2+d_3-c))\stackrel{\phi_2}\longrightarrow\begin{matrix} R(-(d_2+d_3))\\\oplus \\R(-(d_1+d_3))\\\oplus \\R(-(d_1+d_2-c))\end{matrix}\stackrel{\phi_1}\longrightarrow\begin{matrix}R(-d_1)\\\oplus\\ R(-d_2)\\\oplus\\ R(-d_3)\end{matrix}\stackrel{\phi_0}\longrightarrow R\to R/J\to0	
\end{equation}
where
\[
\phi_0=\begin{pmatrix}f_1&f_2&f_3\end{pmatrix}, \qquad \phi_1=
\begin{pmatrix}
0 & L^r & -x_3^b\\
 L^r & 0 & x_2^a\\
-x_1^cx_3^b & -x_1^cx_2^a & 0
\end{pmatrix},
\qquad
\phi_2=
\begin{pmatrix}
x_2^a\\
-x_3^b\\
- L^r
\end{pmatrix}.
\]
Indeed, one checks directly that $\phi_0\phi_1=0$ and $\phi_1\phi_2=0$. Moreover, 
$$
\hht(I_1(\varphi_0)) \geq 1, \quad \hht(I_2(\varphi_1)) \geq 2, \quad \hht(I_1(\varphi_2)) \geq 3,
$$
so the complex is exact by the Buchsbaum--Eisenbud acyclicity criterion
\cite[Theorem~1.4.13]{bruns1998cohen}.
From this resolution,
\[
\Hs_{R/J}(t)=
\frac{
1-t^{d_1}-t^{d_2}-t^{d_3}
+t^{d_2+d_3}+t^{d_1+d_3}+t^{d_1+d_2-c}
-t^{d_1+d_2+d_3-c}
}{(1-t)^n}.
\]
The degree in codimension two is obtained from the second derivative of the numerator
at $t=1$. Thus
\[
\deg(R/J)=cd_3,
\]
and thus in the case $c = 0$ the ideal has height $3$.
The same resolution shows that the first syzygies have shifted degrees
\[
d_2+d_3,\qquad d_1+d_3,\qquad d_1+d_2-c.
\]
Since $d_3\ge d_2$ and $c<d_1$, the smallest of these is
$d_1+d_2-c$. Therefore
\[
\indeg(\syz(J))=d_1+d_2-c.
\]

Now set $e=d_1+d_2-c$. Then
\[
e=\indeg(\syz(J)),\qquad \deg(R/J)=cd_3.
\]
By Theorem~\ref{Bour3Gens},
$\Bour(J)+cd_3=F_{\mathbf d}(e)$. A direct computation gives
\[
F_{\mathbf d}(d_1+d_2-c)=(d_1-c)(d_2-c)+cd_3.
\]
Therefore
\[
\Bour(J)=(d_1-c)(d_2-c).
\]
Equivalently, since $c=d_1+d_2-e$, one has
\[
\Bour(J)=(e-d_1)(e-d_2),
\qquad
\deg(R/J)=(d_1+d_2-e)d_3.
\]
\end{proof}

\begin{Remark}\label{rem:universal-structural-gap}
Theorem~\ref{thm:every-e-occurs-structurally} shows that every integer in the
numerically permitted range for $e$ occurs as an initial syzygy degree. It
does not, however, imply that every admissible pair is realizable. In fact,
the upper endpoint of the numerical interval corresponding to
$e=d_1+d_2$ is never realized. Indeed, $F_{\mathbf d}(d_1+d_2)=d_1d_2$
and
\[
\sigma-(d_1+d_2)D=d_1d_2-(d_1+d_2)^2<0.
\]
Consequently,
\[
I_{d_1+d_2,\mathbf d}=[0,d_1d_2]\cap\mathbb Z,
\]
so the pair $(d_1+d_2,d_1d_2)$ is admissible. We claim that it is not
realizable.

Suppose, to the contrary, that there exists a minimally generated ideal
$J=(f_1,f_2,f_3)$ with degree vector $\mathbf d$ such that
\[
\indeg(\syz(J))=d_1+d_2
\qquad\text{and}\qquad
\deg(R/J)=d_1d_2.
\]
The Bourbaki formula then gives
\[
\Bour(J)
=F_{\mathbf d}(d_1+d_2)-\deg(R/J)
=0.
\]
By Proposition~\ref{prop:not-equigen-Bour-consequences}{\rm(i)}, the ideal
$J$ is perfect of codimension $2$. Hence
Theorem~\ref{Bour3Gens}{\rm(a)} gives
\[
\syz(J)\simeq
R(-(d_1+d_2))\oplus R(-d_3).
\]
Since $d_1+d_2$ is the initial syzygy degree, one must have
$d_1+d_2\le d_3$.

Consider a Hilbert--Burch matrix of $J$ whose column degrees are
$d_1+d_2$ and $d_3$. The two entries in the row corresponding to the
generator of degree $d_3$ have degrees
\[
d_1+d_2-d_3\le0
\qquad\text{and}\qquad
d_3-d_3=0.
\]
Since every non-zero entry of a minimal Hilbert--Burch matrix has positive degree,
both entries must vanish. This row is therefore zero, forcing two of the
maximal minors to vanish, contrary to the minimal generation of $J$ by
three nonzero forms. Thus the admissible pair
\[
(d_1+d_2,d_1d_2)
\]
is never realizable.
\end{Remark}

The first complete structural computation occurs when one generator is linear.
\begin{Proposition}\label{prop:linear-generator-structural-spectrum}
Assume $\mathbf d=(1,d_2,d_3)$ with $1<d_2\le d_3$ and $n\ge3$. Then every
ideal $J$ with degree vector $\mathbf d$ satisfies $\indeg(\syz(J))=d_2+1$ and
$I_{d_2+1,\mathbf d}=[0,d_2]\cap\mathbb Z$. Moreover, the admissible pair $(d_2+1,j)$ is realizable if and only if $0\le j\le d_2-1$. Thus $j=d_2$ is the unique nonrealizable value in this interval.
\end{Proposition}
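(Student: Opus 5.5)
Most of the statement follows from Proposition~\ref{prop:numerical-bounds-non-equigenerated}(iv); the only new content is the exact range of realizable $j$. After a linear change of coordinates we write $J=(x_1,g,h)$ with $g,h\in k[x_2,\ldots,x_n]$, $\ell=\gcd(g,h)$, and item (iv) gives $e=\indeg(\syz(J))=d_2+1$ and $\deg(R/J)=\deg\ell$. This already shows that $e=d_2+1$ for every such $J$.

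For the interval, we compute $F_{\mathbf d}(d_2+1)$ with $D=1+d_2+d_3$ and $\sigma=d_2+d_3+d_2d_3$. Expanding gives
\[
(d_2+1)^2-(d_2+1)(1+d_2+d_3)+d_2+d_3+d_2d_3=d_2 .
\]
Likewise $\sigma-eD=d_2-(d_2+1)^2-\dots$ turns out negative; concretely $\sigma-(d_2+1)D=-d_2^2-d_2-1<0$. Hence $I_{d_2+1,\mathbf d}=[0,d_2]\cap\mathbb Z$.

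Necessity of $j\le d_2-1$: since $J$ is minimally generated, $g\notin(x_1,h)$. In the reduced form $g,h\in k[x_2,\dots,x_n]$, this means $h\nmid g$. Because $\ell\mid g$ and $\deg g=d_2\le d_3=\deg h$, the equality $\deg\ell=d_2$ would force $\ell=g$ up to scalar, so $g\mid h$. Then $h\in(g)$, contradicting minimality. Therefore $j=\deg\ell\le d_2-1$. Alternatively, this is exactly the inequality $\deg\ell<d_2$ already noted in the proof of (iv). One must take care that "replacing the generators modulo $x_1$" preserves minimality and degrees; it does, since the change only alters the generators by multiples of $x_1$.

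Sufficiency, which is the only real construction: for $0\le j\le d_2-1$, take $\ell=x_2^{j}$, $g=x_2^{j}x_3^{d_2-j}$ and $h=x_2^{j}x_4'$, where $x_4'$ must be chosen coprime to $x_3$ inside $k[x_2,\dots,x_n]$. Since $n\ge3$ we cannot assume a fourth variable, so instead set $h=x_2^{j}(x_2+x_3)^{d_3-j}$. Then $g_1=x_3^{d_2-j}$ and $h_1=(x_2+x_3)^{d_3-j}$ are coprime of positive degree, so $\gcd(g,h)=x_2^j$. Minimality holds because neither $g$ nor $h$ divides the other, as both cofactors have positive degree, and $\gcd(x_1,g,h)=1$. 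By (iv), $\deg(R/J)=j$ and $e=d_2+1$, so $(d_2+1,j)$ is realizable, and $j=d_2$ is the unique excluded value. The main point requiring care is the non-realizability of $j=d_2$; everything else is a direct application of item (iv).
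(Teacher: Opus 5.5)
Your proposal is correct and follows the paper's proof essentially step by step. You use item (iv) to get $e=d_2+1$ and $\deg(R/J)=\deg\ell$, the same evaluations $F_{\mathbf d}(d_2+1)=d_2$ and $\sigma-(d_2+1)D=-(d_2^2+d_2+1)<0$, the same divisibility argument ruling out $\deg\ell=d_2$, and the same construction $J=(x_1,\ell u,\ell v)$. Your explicit choice $\ell=x_2^{j}$, $u=x_3^{d_2-j}$, $v=(x_2+x_3)^{d_3-j}$ also makes concrete the paper's remark that such forms exist because $n\ge3$.
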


\begin{proof}
Let $J=(f_1,f_2,f_3)$ have degrees $(1,d_2,d_3)$. After a linear change of
coordinates, write $J=(x_1,g,h)$ and set $S=R/(x_1)$. Let $\bar g,\bar h$ be
the images of $g,h$ in $S$. By
Proposition~\ref{prop:numerical-bounds-non-equigenerated}{\rm(iv)}, one has$
\indeg(\syz(J))=d_2+1$ and $\deg(R/J)=\deg\gcd(\bar g,\bar h)$. 
The Bourbaki formula gives
\[
\Bour(J)+\deg(R/J)
=F_{(1,d_2,d_3)}(d_2+1)=d_2.
\]
Moreover $\sigma-(d_2+1)D=-(d_2^2+d_2+1)<0$. Therefore
\[
I_{d_2+1,\mathbf d}=[0,d_2]\cap\mathbb Z.
\]
Set $t=\deg\gcd(\bar g,\bar h)=\deg(R/J)$. Since $J$ is minimally generated, one cannot have $t=d_2$. Indeed, if $t=d_2$, then $\bar g$ divides $\bar h$, and hence
$h\in(x_1,g)$, contradicting minimality. Thus
\[
0\le\deg(R/J)\le d_2-1.
\]

Conversely, let $0\le t\le d_2-1$. Identifying
$S\simeq k[x_2,\ldots,x_n]$, choose homogeneous forms $\ell,u,v\in S$ such
that $\deg\ell=t$, $\deg u=d_2-t$ and $\deg v=d_3-t$ with $\gcd(u,v)=1$ and $v\notin(u)$. Such forms exist because $n\ge3$. Set $J=(x_1,\ell u,\ell v)\subseteq R$. Then $J$ is minimally generated, has degree vector $\mathbf d$, and satisfies
$\gcd(x_1,\ell u,\ell v)=1$.
Moreover $\deg\gcd(\ell u,\ell v)=\deg\ell=t$. Hence Proposition~\ref{prop:numerical-bounds-non-equigenerated}{\rm(iv)}
gives
\[
\indeg(\syz(J))=d_2+1
\qquad\text{and}\qquad
\deg(R/J)=t.
\]
Thus the pair $(d_2+1,t)$ is realizable for every
$0\le t\le d_2-1$, while the value $t=d_2$ is not realizable.
\end{proof}

We next record two realizable pairs on the top boundary $e=d_1+d$ when the
two largest generator degrees coincide.
\begin{Proposition}\label{prop:top-boundary-structural-spectrum}
Assume $\mathbf d=(d_1,d,d)$ with $1\le d_1\le d$ and $n\ge3$. Then
$I_{d_1+d,\mathbf d}=[0,d_1d]\cap\mathbb Z$. Moreover:
\begin{enumerate}
\item[\rm(i)] An ideal $J$ with degree vector $\mathbf d$ realizes the
admissible pair $(d_1+d,0)$ if and only if $J$ is a complete intersection.
\item[\rm(ii)] If $d_1<d$, then the admissible pair $(d_1+d,1)$ is realizable.
\end{enumerate}
\end{Proposition}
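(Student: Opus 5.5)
The plan has three parts: compute the interval, characterize the pair with $j=0$, and exhibit a realization with $j=1$.

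\emph{The interval.} With $\mathbf d=(d_1,d,d)$ we have $D=d_1+2d$ and $\sigma=2d_1d+d^2$. We evaluate
\[
F_{\mathbf d}(d_1+d)=(d_1+d)^2-(d_1+d)(d_1+2d)+2d_1d+d^2=d_1d,
\]
which is consistent with the factorization $F(t)-d_1d=(t-d)(t-d_1-d)$ used in the proof of Proposition~\ref{prop:not-equigen-Bour-consequences}(v). The lower term is
\[
\sigma-(d_1+d)D=2d_1d+d^2-(d_1+d)(d_1+2d)=-d_1^2-d_1d-d^2<0.
\]
Hence $I_{d_1+d,\mathbf d}=[0,d_1d]\cap\mathbb Z$.

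\emph{Part (i).} Suppose $J$ realizes $(d_1+d,0)$. Then $\deg(R/J)=0$, so by the convention and $\gcd=1$ we get $\hht(J)\ge3$. Since $J$ has three generators in the Cohen--Macaulay ring $R$, $J$ is a complete intersection. Conversely, suppose $J$ is a complete intersection. Then $\deg(R/J)=0$, and $\syz(J)$ is generated by the Koszul syzygies, of shifted degrees $d_1+d$, $d_1+d$ and $2d$. Since $d_1\le d$, the initial degree is $d_1+d$. Alternatively, Proposition~\ref{prop:not-equigen-Bour-consequences}(iv) gives $\Bour(J)=d_1d$, and then $F(e)=d_1d$ forces $e=d_1+d$ by the factorization above. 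Either route realizes the pair, and a complete intersection of type $(d_1,d,d)$ exists since $n\ge3$ (e.g.\ $x_1^{d_1},x_2^d,x_3^d$).

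\emph{Part (ii).} For $d_1\ge2$, Example~\ref{ex:next-extremal-non-equigenerated} already provides an ideal with $\indeg(\syz(J))=d_1+d$ and $\deg(R/J)=1$, so the pair is realized. The remaining case is $d_1=1<d$. Here Proposition~\ref{prop:linear-generator-structural-spectrum} (with $d_2=d_3=d$) shows that every $J$ has $e=d+1=d_1+d$, and that $(d+1,j)$ is realizable for all $0\le j\le d-1$. Since $d\ge2$, this includes $j=1$. Concretely, $J=(x_1,x_2u,x_2v)$ with $u,v$ coprime forms of degree $d-1$ in $x_2,\dots,x_n$, not dividing each other, works.

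\emph{Main obstacle.} The only delicate point is the case split in (ii). The construction in Example~\ref{ex:next-extremal-non-equigenerated} requires $d_1\ge2$, so the linear case must be handled separately, and Proposition~\ref{prop:linear-generator-structural-spectrum} supplies exactly that. Everything else is substitution into $F_{\mathbf d}$ together with the earlier characterizations.
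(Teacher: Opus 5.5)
Your proposal is correct and follows essentially the same route as the paper. The paper evaluates $F_{\mathbf d}$ at $d_1+d$ in the same way, proves (i) by the same complete-intersection argument, with Proposition~\ref{prop:not-equigen-Bour-consequences}(iv) plus the factorization $(e-d)(e-d_1-d)$ for the converse, and proves (ii) by the same split between $d_1=1$ (Proposition~\ref{prop:linear-generator-structural-spectrum}) and $2\le d_1<d$ (Example~\ref{ex:next-extremal-non-equigenerated}); your direct Koszul-syzygy check and the explicit ideal $(x_1,x_2u,x_2v)$ are harmless extras.
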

\begin{proof}
Let $J$ have degrees $(d_1,d,d)$ and set $e=\indeg(\syz(J))$. By
Proposition~\ref{prop:numerical-bounds-non-equigenerated}{\rm(i)},
$d<e\le d_1+d$, and
\[
F_{\mathbf d}(e)-d_1d=(e-d)(e-d_1-d).
\]
Hence $F_{\mathbf d}(e)\le d_1d$, with equality if and only if
$e=d_1+d$. Moreover, $F_{\mathbf d}(d_1+d)=d_1d$ and
$\sigma-(d_1+d)D=-(d_1^2+d_1d+d^2)<0$, so
$I_{d_1+d,\mathbf d}=[0,d_1d]\cap\mathbb Z$.

For {\rm(i)}, if $J$ realizes $(d_1+d,0)$, then $\deg(R/J)=0$, and hence
$J$ is a complete intersection. Conversely, if $J$ is a complete
intersection, then $\deg(R/J)=0$ and
Proposition~\ref{prop:not-equigen-Bour-consequences}{\rm(iv)} gives
$\Bour(J)=d_1d$. Thus $F_{\mathbf d}(e)=d_1d$, and the factorization above
forces $e=d_1+d$. Therefore $J$ realizes $(d_1+d,0)$.

For {\rm(ii)}, if $d_1=1$, the assertion follows from
Proposition~\ref{prop:linear-generator-structural-spectrum}; if
$2\le d_1<d$, it follows from
Example~\ref{ex:next-extremal-non-equigenerated}.
\end{proof}
The preceding results show that numerical admissibility does not guarantee
realizability. Theorem~\ref{thm:every-e-occurs-structurally} proves that every
numerically permitted initial syzygy degree occurs, whereas
Remark~\ref{rem:universal-structural-gap} exhibits a nonrealizable admissible
pair. Proposition~\ref{prop:linear-generator-structural-spectrum} completely
determines the realizable pairs when one generator is linear, while
Proposition~\ref{prop:top-boundary-structural-spectrum} gives only partial
information on the top boundary when $d_2=d_3$. This leads naturally to the
following question.

\begin{Question}\label{ques:structural-spectrum}
For a fixed degree vector $\mathbf d$, which admissible pairs $(e,j)$ occur as
\[
(e,j)=\bigl(\indeg(\syz(J)),\deg(R/J)\bigr)
\]
for a three-generated homogeneous ideal $J\subseteq R$ of degree vector
$\mathbf d$?
\end{Question}

\section{Equigenerated ideals and generalized du Plessis--Wall gaps}
\label{sec:equigenerated}

In this section, we study the Bourbaki degree of three-generated
$d$-equigenerated ideals. Throughout, let
$J=(f_1,f_2,f_3)\subseteq R=k[x_1,\ldots,x_n]$ be minimally generated by
homogeneous forms of degree $d$, with $\hht(J)\ge2$.

We first clarify the grading convention. In Section~\ref{sec:3-gen-general},
the syzygy module is graded as a submodule of
$\bigoplus_{i=1}^3R(-d_i)$; thus, if
$\nu=(a_1,a_2,a_3)$ has shifted degree $q$, then
$\deg(a_i)=q-d_i$ for every nonzero coordinate $a_i$. In the present
equigenerated setting, we instead regard $\syz(J)$ as a graded submodule of
$R^3$ and set
\[
e=\indeg(\syz(J))
\]
in the standard grading. Hence every nonzero coordinate of a syzygy of
degree $e$ has degree $e$, while the same syzygy has shifted degree $d+e$
in $R(-d)^3$. Throughout the remainder of the paper, $e$ always denotes this standard
syzygy degree.

Consequently, Proposition~\ref{prop:basic-numerical-bound}, which is stated
in terms of the shifted degree, gives only
$\Bour(J)\le(d+e)^2$. The sharper inequality
$\Bour(J)\le e^2$ requires a separate argument in the height-two case and
will be proved in Theorem~\ref{BourBoundEquigenerated}. When
$\hht(J)=3$, the ideal $J$ is a complete intersection and the inequality
is immediate.

Theorem~\ref{Bour3Gens} also specializes neatly in this setting. Since
$D=3d$, $\sigma=3d^2$, and the shifted syzygy degree is $d+e$, it gives
\[
\Bour(J)+\deg(R/J)
=3d^2-(d+e)(2d-e)
=d^2-ed+e^2.
\]
Therefore
\begin{equation}\label{bourequigeneratedformula}
\Bour(J)=d^2-ed+e^2-\deg(R/J).
\end{equation}
This is the same numerical expression that occurs for gradient ideals of
projective plane curves, so the Bourbaki degree naturally extends the
notion introduced in~\cite{MAA}.

\subsection{A distinguished upper bound for the Bourbaki degree}
The following result generalizes \cite[Theorem~2.10]{MAA} and yields bounds
for $\deg(R/J)$ analogous to the du Plessis--Wall bounds for the global
Tjurina number of plane curves~\cite[Theorem 3.2]{CTC-Plessis}. Under the additional
hypothesis that $\syz(J)$ is locally free on the punctured spectrum, the
inequality $\Bour(J)\le e^2$ was proved in \cite[Theorem~3.3]{MFA}; this
hypothesis is automatic when $n=3$. Thus, in the case corresponding to
$d_1=0$ in the notation of \cite{MFA}, the following theorem removes the
local-freeness hypothesis and gives an affirmative answer to
\cite[Question~3.4]{MFA} within this subclass.

\begin{Theorem}\label{BourBoundEquigenerated}
Let $J=(f_1,f_2,f_3)\subseteq R=k[x_1,\ldots,x_n]$ be a $d$-equigenerated ideal with $n\geq 3$ and $\hht(J)=2$. If $e = \indeg(\syz(J))$, then
\[
\Bour(J)\leq e^2.
\]
Consequently,
\[
d(d-e)\leq \deg(R/J)\leq d^2-de+e^2.
\]
\end{Theorem}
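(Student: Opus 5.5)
The plan is to reduce to the case $n=3$ by cutting with general linear forms, and then invoke \cite[Theorem~3.3]{MFA}. In three variables the local-freeness hypothesis of that result holds automatically, since $R/J$ has dimension one. By the equigenerated Bourbaki formula \eqref{bourequigeneratedformula}, the inequality $\Bour(J)\le e^2$ is equivalent to
\[
\deg(R/J)\ge d(d-e).
\]
This reformulation is what makes the induction work, because the quantity $d(d-e)$ behaves monotonically in the right direction under specialization. The upper bound $\deg(R/J)\le d^2-de+e^2$ is just $\Bour(J)\ge0$.

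\emph{Base case.} For $n=3$, every prime $\fp\neq\fm$ containing $J$ is a minimal prime of height two. So $R_\fp/J_\fp$ is Artinian, hence Cohen--Macaulay. By Remark~\ref{rem:syz-locally-free-and-Bourbaki-CM}(i), $\syz(J)$ is locally free on the punctured spectrum, and \cite[Theorem~3.3]{MFA} gives $\Bour(J)\le e^2$.

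\emph{Inductive step, $n\ge4$.} Choose a general linear form $\ell$, put $\bar R=R/(\ell)\simeq k[x_1,\ldots,x_{n-1}]$, and let $\bar J=J\bar R$. I would arrange the following properties in turn, each holding for $\ell$ in a nonempty open set.
\begin{enumerate}
\item[(1)] $\ell$ avoids every associated prime of $R/J$ other than $\fm$, by prime avoidance. In particular $\ell$ lies in no height-two associated prime, so $\hht(J+(\ell))=3$ and $\hht_{\bar R}(\bar J)=2$. Moreover each height-two associated prime $\fp$ of $J$ contributes $\length((R/J)_\fp)\deg(R/\fp)$, and cutting by $\ell$ preserves this contribution. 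Hence the codimension-two degree of $\bar R/\bar J$ equals $\deg(R/J)$.
\item[(2)] $\ell$ divides no nonzero element of the three-dimensional space $\langle f_1,f_2,f_3\rangle$. The linear forms dividing some element of this space form a subvariety of $\PP^{n-1}$ of dimension at most $2<n-1$. Since all generators have degree $d$, this makes $\bar f_1,\bar f_2,\bar f_3$ linearly independent. Hence they minimally generate $\bar J$.
\item[(3)] Take a minimal syzygy $\nu$ of degree $e$, which is primitive by Lemma~\ref{lem:cok-torsion-free}. Choose $\ell$ not dividing all of its coordinates, so $\bar\nu\neq0$ is a syzygy of $\bar J$ of degree $e$. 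Thus $\bar e:=\indeg(\syz(\bar J))\le e$.
\end{enumerate}
By induction, applied to $\bar J\subseteq\bar R$ (with $n-1\ge3$), we get
\[
\deg(R/J)=\deg(\bar R/\bar J)\ge d(d-\bar e)\ge d(d-e).
\]
This is exactly the desired inequality.

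I expect the main obstacle to be step (1), namely the precise comparison of codimension-two degrees when $R/J$ has embedded or irrelevant components, especially when $\fm\in\Ass(R/J)$. The point to verify is that only height-two associated primes enter the definition of $\deg$. A general $\ell$ then meets each such component properly, and no new height-two component of $\bar J$ arises from the embedded components, since those have height at least three and cutting yields height at least three in $\bar R$. If this step proved delicate, a fallback is to replace $J$ by the ideal of its height-two primary components. However, this changes the syzygies, so I would first try to make the direct comparison work.
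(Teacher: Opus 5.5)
Your argument is correct, but it takes a genuinely different route from the paper.

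\textbf{Your route.} You prove the inequality in the form $\deg(R/J)\ge d(d-e)$. You show it survives a general linear section: the codimension-two degree is preserved, the three forms stay minimal generators, and $e$ can only drop. You then import the case $n=3$ from \cite[Theorem~3.3]{MFA}, where local freeness is automatic.

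\textbf{Your flagged obstacles.} The point you flag in (1) is settled cleanly by a Hilbert series computation. Since $0:_{R/J}\ell$ has finite length, one has $H_{R/(J+(\ell))}(t)=(1-t)H_{R/J}(t)+t\,H_{0:_{R/J}\ell}(t)$. Because $n-3\ge1$, the polynomial correction does not change the value of the numerator at $t=1$. So embedded components, including $\fm$, play no role. Step (3) is automatic, since a primitive syzygy has no linear common factor. In the base case, your claim that every prime $\fp\neq\fm$ containing $J$ is minimal holds only for homogeneous primes. That suffices for graded modules; alternatively, $\syz(J)$ is reflexive, hence free in codimension at most two.

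\textbf{The paper's route.} The paper argues directly in every dimension via linkage. Take $f_1,f_2$ a regular sequence and $C=(f_1,f_2):f_3$; then $\deg(R/J)=d^2-\deg(R/C)$. The third coordinate $r$ of an initial syzygy lies in $C$. Together with a general $g\in(f_1,f_2)$ coprime to $r$, it gives a complete intersection $(r,g)\subseteq C$ of type $(e,d)$. Hence $\deg(R/C)\le ed$.

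\textbf{Comparison.} The paper's proof is self-contained and elementary, and it does not rely on \cite{MFA}, whose Theorem~3.3 assumes the local-freeness hypothesis being removed. It also exhibits the equality case $C=(r,g)$, which is exactly what Proposition~\ref{prop:BourJ-max-res} and Lemma~\ref{upperboundinitialdegree} use afterwards. Your reduction is shorter given the citation. It also isolates a useful principle: the bound is insensitive to general hyperplane sections, which explains why local freeness is inessential. However, all the substantive content sits in the imported three-variable result, and your argument gives no handle on the equality case.
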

\begin{proof}
After replacing the generators by general $k$-linear combinations, we may assume that $f_1,f_2$ is an $R$-regular sequence, since $R$ is Cohen--Macaulay and $\hht(J) = 2$. Let $C=(f_1,f_2):f_3$. Multiplication by $f_3$ gives the exact sequence
\[
0\to R/C(-d)\to R/(f_1,f_2)\to R/J\to 0.
\]
Hence
\begin{equation}\label{formula1}
    \deg(R/J)=d^2-\deg(R/C).
\end{equation}
We claim that $\deg(R/C)\leq ed$. If $e=d$, this is clear, since $\deg(R/C)\leq \deg(R/(f_1,f_2))=d^2=ed$.
Assume now that $e<d$. Let
\[
\mathfrak z=\begin{bmatrix}p\\ q\\ r\end{bmatrix}
\]
be a nonzero minimal syzygy of $J$ of degree $e$. We first note that $r\neq 0$. Indeed, if $r=0$, then $pf_1+qf_2=0$. Since $f_1,f_2$ is a regular sequence, we would have $(p,q)=a(-f_2,f_1)$ for some homogeneous $a\in R$ of degree $e-d<0$, which is impossible.
Since $\mathfrak z$ is a syzygy, it follows that $r\in C$. Moreover, $\deg r=e$.

Choose a general $k$-linear combination
\[
g=\lambda f_1+\mu f_2.
\]
We claim that $g$ may be chosen so that $\hht(r,g)=2$. Indeed, since
$f_1,f_2$ is a regular sequence, they have no common irreducible factor.
Let $h_1,\ldots,h_s$ be the distinct irreducible factors of $r$. For each
$i$, the condition
\[
h_i\mid \lambda f_1+\mu f_2
\]
defines a proper linear condition on $(\lambda,\mu)\in k^2$, because $h_i$ does not divide both $f_1$ and $f_2$. Indeed, the condition can be given as $\lambda \overline{f}_1 + \mu \overline{f_2} = 0$ in the quotient ring $R/(h_i)$, and thus the locus of such $(\lambda, \mu) \in k^2$ is the kernel of the $k$-linear map $k^2 \xrightarrow{(f_1, f_2)} R/(h_i)$. Moreover, it gives a proper subspace since each $h_i$ cannot be a factor of both $ f_1$ and $ f_2$, so either $(1,0)$ or $(0,1)$ is not in the kernel. Since $k$ is infinite, we may choose $(\lambda,\mu)$ outside the union of these finitely many proper linear
subspaces. Then no irreducible factor of $r$ divides $g$, so $\gcd(r,g)=1$. Hence $\hht(r,g)=2$.

Since $g\in (f_1,f_2)\subseteq C$, we get 
$(r,g)\subseteq C$ where $(r,g)$ is a complete  intersection of type $(e,d)$. Therefore
\[
\deg(R/C)\leq \deg(R/(r,g))=ed.
\]
Consequently,
\[
\deg(R/J)=d^2-\deg(R/C)\geq d^2-ed=d(d-e).
\]
Finally, using the Bourbaki degree formula~\eqref{bourequigeneratedformula}
we obtain
\[
\Bour(J)
=d^2+e^2-ed-\deg(R/J)
\leq d^2+e^2-ed-d(d-e)
=e^2.
\]
This proves the assertion.
\end{proof}

The following family shows that the upper bound in
Theorem~\ref{BourBoundEquigenerated} is sharp for every $1\le e<d$,
even among gradient ideals of plane curves.

\begin{Example}\label{Bour-e-square-gradient}
Let $d\geq 2$ and $1\leq e<d$. Put $a=d-e$ and
\[
F=x^{d+1}+y^{d+1}+x^{a+1}z^e,
\]
a reduced singular curve. Then the gradient ideal $J_F=(F_x,F_y,F_z)$ is generated in degree $d$ and, up to nonzero scalars,
\[
J_F=(x^aP,\ y^d,\ x^aQ),
\qquad
P=(d+1)x^e+(a+1)z^e,\quad Q=exz^{e-1}.
\]
Since $\gcd(P,Q)=1$, the sequence $P,y^d,Q$ is regular. We claim that
$\syz(J_F)$ is minimally generated by
\[
\eta_1=\begin{bmatrix}Q\\0\\-P\end{bmatrix},\qquad
\eta_2=\begin{bmatrix}y^d\\-x^aP\\0\end{bmatrix},\qquad
\eta_3=\begin{bmatrix}0\\-x^aQ\\y^d\end{bmatrix}.
\]
Indeed, if $(u,v,w)\in \syz(J_F)$, then $u(x^aP)+vy^d+w(x^aQ)=0$ hence $x^a(uP+wQ)=-vy^d$. Since $\gcd(x^a,y^d)=1$, we have $v=x^av'$. Thus 
$uP+v'y^d+wQ=0$. As $P,y^d,Q$ is a regular sequence, $(u,v',w)$ is generated by the Koszul
syzygies of $P,y^d,Q$, and therefore $(u,v,w)$ is generated by
$\eta_1,\eta_2,\eta_3$.

The generators are minimal: their coefficient degrees are $e,d,d$, respectively,
and $\eta_2,\eta_3$ cannot be redundant modulo $\eta_1$, since this would force
$P$ and $Q$ to be scalar multiples. Hence
\[
\indeg(\syz(J_F))=e.
\]
Moreover, $y^d\eta_1-Q\eta_2+P\eta_3=0$ and this gives the minimal free resolution
\[
0\to R(-2d-e)
\to R(-d-e)\oplus R(-2d)^2
\to R(-d)^3
\to R
\to R/J_F
\to 0.
\]
Thus $\deg(R/J_F)=d(d-e)$. By the Bourbaki degree formula,
\[
\Bour(J_F)=d^2+e^2-de-\deg(R/J_F)
=d^2+e^2-de-d(d-e)=e^2.
\]
Therefore, for every $1\leq e<d$, there exists a plane curve whose gradient ideal
satisfies $\indeg(\syz(J_F))=e$ and $\Bour(J_F)=e^2$.
\end{Example}

The equality case in Theorem~\ref{BourBoundEquigenerated} admits the
following homological characterization.

\begin{Proposition}\label{prop:BourJ-max-res}
Let $J=(f_1,f_2,f_3)\subseteq R=k[x_1,\ldots,x_n]$ be a $d$-equigenerated ideal with $n\geq 3$ and $\hht(J)=2$.  Then the following are equivalent:
\begin{enumerate}
    \item[(a)] $\Bour(J)=e^2$
    \item[(b)] The minimal graded free resolution of $R/J$ is
    \[0\to R(-2d-e)\to R(-d-e)\oplus R(-2d)^2\to R(-d)^3\to R \to R/J \to 0\]
\end{enumerate}
\end{Proposition}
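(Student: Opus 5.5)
The direction (b)$\Rightarrow$(a) is a Hilbert series computation. From the displayed resolution, the numerator of $\Hs_{R/J}(t)$ is
\[
1-3t^d+t^{d+e}+2t^{2d}-t^{2d+e}.
\]
Half its second derivative at $t=1$ gives $\deg(R/J)=d(d-e)$, exactly as in Example~\ref{Bour-e-square-gradient}. The first syzygy shifts are $d+e$ and $2d$, and $d+e\le 2d$, so the standard initial degree of $\syz(J)$ is $e$. Formula~\eqref{bourequigeneratedformula} then yields $\Bour(J)=d^2-ed+e^2-d(d-e)=e^2$.

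For (a)$\Rightarrow$(b), I would rerun the proof of Theorem~\ref{BourBoundEquigenerated} and track when equality holds. Suppose $\Bour(J)=e^2$. Then $\deg(R/J)=d(d-e)$, so $\deg(R/C)=ed$ with $C=(f_1,f_2):f_3$ (after general linear combinations of the generators).

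First I dispose of $e=d$. In that case $\deg(R/J)=0$, which contradicts $\hht(J)=2$, so $e<d$.

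Next take the minimal syzygy $\mathfrak z=(p,q,r)$ of degree $e$ and the general element $g=\lambda f_1+\mu f_2$, so that $(r,g)\subseteq C$ with both ideals of height two and $\deg(R/(r,g))=ed=\deg(R/C)$. The complete intersection $(r,g)$ is unmixed. Both ideals have the same degree, so they agree at every height-two associated prime of $(r,g)$. Moreover $C$ has no embedded components, since it is a colon of the unmixed ideal $(f_1,f_2)$. Together these give $C=(r,g)$. In particular $C$ is a complete intersection of type $(e,d)$. I would then change generators so that $g$ becomes $f_1$ (or $f_2$), i.e.\ $C=(r,f_1)$ after renaming.

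Now I resolve $J$ by a mapping cone. The exact sequence $0\to R/C(-d)\xrightarrow{f_3}R/(f_1,f_2)\to R/J\to0$ lifts to a map from the Koszul resolution of $C(-d)$ (shifts $d+e$, $2d$ in degree one, $2d+e$ in degree two) to the Koszul resolution of $(f_1,f_2)$. The mapping cone has terms
\[
R\leftarrow R(-d)^3\leftarrow R(-2d)\oplus R(-d-e)\oplus R(-2d)\leftarrow R(-2d-e).
\]
The only possible cancellation is between $R(-2d)$ in homological degree two of the cone (from the Koszul relation of $f_1,f_2$) and the $R(-2d)$ in degree one of $C(-d)$ (generator $g$). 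I expect the main obstacle to be showing the comparison map between these two summands has no unit entry; the Betti numbers of $R/J$ then stay as in (b). The cancellation would make $J$ perfect with $\syz(J)\simeq R(-d-e)\oplus R(-2d+e)$, giving $\Bour(J)=0\ne e^2$ because $e\ge1$. This rules it out numerically, so the cone is minimal and its shape is exactly the resolution in (b).
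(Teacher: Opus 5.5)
Your route is the paper's: (b)$\Rightarrow$(a) by reading $\deg(R/J)=d(d-e)$ off the Hilbert series, and (a)$\Rightarrow$(b) by forcing $C=(f_1,f_2):f_3$ to equal the complete intersection $(r,g)$ and then taking a mapping cone. Your proof that $C=(r,g)$ is correct. You compare local lengths at the minimal primes of $(r,g)$ and use the positive weights $\deg(R/\mathfrak p)$. Unmixedness of $(r,g)$ alone already suffices, since $\Ass(C/(r,g))\subseteq\Ass(R/(r,g))$; this is how the paper argues.

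The step that does not work as written is the minimality of the cone. Let $G_\bullet$ resolve $R/C(-d)$ and $F_\bullet$ resolve $R/(f_1,f_2)$. The two copies of $R(-2d)$ you want to cancel both lie in homological degree two of the cone. One is $F_2$, the Koszul relation of $f_1,f_2$. The other is the $g$-summand of $G_1$, and the cone also places $G_1$ in degree two. Summands in the same homological degree can never cancel. Moreover, the comparison map sends the $g$-summand of $G_1$ into $F_1=R(-d)^2$, not into $F_2$. Consequently, your claim that ``the cancellation would make $J$ perfect'' is not derived from anything, so the appeal to $\Bour(J)\neq 0$ does not establish minimality.

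The correct argument is a one-line shift comparison, and it is what lies behind the paper's remark that no cancellation occurs. The shifts of the cone in homological degrees $0,1,2,3$ are $\{0\}$, $\{d,d,d\}$, $\{2d,d+e,2d\}$ and $\{2d+e\}$. Since $d\ge 1$ and $e\ge 1$, no shift appears in two adjacent degrees. Hence every entry of the cone's differential lies in $\fm$. This includes the comparison maps $G_1\to F_1$, whose entries have degrees $e$ and $d$, and $G_2\to F_2$, of degree $e$. So the cone is automatically minimal. With this replacement your proof is complete.
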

\begin{proof} 
Assume first that $\Bour(J)=e^2$, and note that this forces $e < d$, since $\hht(J) = 2$ gives $\deg(R/J) > 0$. After a general change of generators, we may assume that $f_1,f_2$ is a regular sequence. Set $C=(f_1,f_2):f_3$. As in the proof of Theorem~\ref{BourBoundEquigenerated}, there exists a regular sequence $r,g\in C$, with
$\deg r=e$ and $\deg g=d$.

By the Bourbaki degree formula and~\eqref{formula1}, this is equivalent to
\[
\deg(R/C)=ed=\deg(R/(r,g)).
\]
Since $(r,g)\subseteq C$, we have an exact sequence
\[
0\to C/(r,g)\to R/(r,g)\to R/C\to 0.
\]
If $C/(r,g)\neq 0$, then, because $R/(r,g)$ is Cohen--Macaulay of dimension $n-2$, every associated prime of the submodule $C/(r,g)$ has dimension $n-2$. Hence $\dim C/(r,g)=n-2$. By additivity of degree in the above exact sequence, this would give
\[
\deg(R/(r,g))=\deg(C/(r,g))+\deg(R/C)>\deg(R/C),
\]
contradicting $\deg(R/(r,g))=\deg(R/C)$. Therefore $C/(r,g)=0$, and so
$C=(r,g)$.

Now use the standard exact sequence
\[
0\to R/C(-d)\xrightarrow{\cdot f_3} R/(f_1,f_2)\to R/J\to 0.
\]
Since $C=(r,g)$ is a complete intersection of type $(e,d)$, the minimal
resolution of $R/C(-d)$ is
\[
0\to R(-2d-e)\to R(-d-e)\oplus R(-2d)\to R(-d)\to R/C(-d)\to 0.
\]
Together with the Koszul resolution of $R/(f_1,f_2)$, the mapping cone gives
\[
0\to R(-2d-e)\to R(-d-e)\oplus R(-2d)^2
\to R(-d)^3\to R\to R/J\to 0.
\]
No cancellation occurs, so this resolution is minimal. Thus (b) holds.

Conversely, assume that $R/J$ has the displayed minimal graded free resolution. Then
\[
\Hs_{R/J}(t)=
\frac{1-3t^d+t^{d+e}+2t^{2d}-t^{2d+e}}{(1-t)^n}.
\]
Set $h(t)=1-3t^d+t^{d+e}+2t^{2d}-t^{2d+e}$. Since $\hht(J)=2$, the numerator $h(t)$ has a zero of order $2$ at $t=1$. Hence
\[
\deg(R/J)=\frac{h''(1)}{2}=\frac{2(d^2-ed)}{2}=d^2-ed.
\]
Using the Bourbaki degree formula, we obtain
\[
\Bour(J)=d^2+e^2-ed-(d^2-ed)=e^2.
\]
Thus $\Bour(J)=e^2$, as required.
\end{proof}

\subsection{Gaps and du Plessis--Wall bounds}\label{num-gaps}

Let $J=(f_1,f_2,f_3)\subseteq R$ be minimally generated by three
homogeneous forms of degree $d\ge2$, with $\hht(J)=2$; in other words,
$J$ is a $d$-equigenerated strict almost complete intersection. Set
$e=\indeg(\syz(J))$. By the Bourbaki degree formula and $0\le\Bour(J)\le e^2$, we obtain
\[
d(d-e)\le\deg(R/J)\le d^2-ed+e^2.
\]
By analogy with \cite{CTC-Plessis}, we call these inequalities the
generalized du Plessis--Wall bounds. For each fixed $e$, define the
integer intervals
\[
I_{d,e}:=\bigl[d(d-e),\,d(d-e)+e^2\bigr]\cap\mathbb Z,
\qquad
B_{d,e}:=[0,e^2]\cap\mathbb Z.
\]
Thus $I_{d,e}$ is the numerically allowed interval for $\deg(R/J)$,
whereas $B_{d,e}$ is the numerically allowed interval for $\Bour(J)$.

The interesting range is $e<d$. Indeed, when $e=d$, the preceding
inequalities reduce to
\[
0\le\deg(R/J)\le d^2,
\qquad
0\le\Bour(J)\le d^2,
\]
so these estimates alone detect no numerical gap.
For comparison with the classical du Plessis--Wall picture, we also
include the formal block $I_{d,0}=\{d^2\}$, although $e=0$ does not
occur for a strict almost complete intersection.
\medskip

\noindent
The following table records the intervals for $d=2,3,4,5$.

\begin{table}[h]
\centering
\renewcommand{\arraystretch}{1.15}
\begin{tabular}{|c|c|c|c|c|c|c|c|}
$d$ & $e=0$ & $e=1$ & $e=2$ & $e=3$ & $e=4$
& Union for $0\le e<d$ & Gaps \\ \hline
$2$ & $\{4\}$ & $[2,3]$ & --- & --- & ---
& $\{2,3,4\}$ & none \\
$3$ & $\{9\}$ & $[6,7]$ & $[3,7]$ & --- & ---
& $\{3,4,5,6,7,9\}$ & $8$ \\
$4$ & $\{16\}$ & $[12,13]$ & $[8,12]$ & $[4,13]$ & ---
& $\{4,5,\ldots,13,16\}$ & $14,15$ \\
$5$ & $\{25\}$ & $[20,21]$ & $[15,19]$ & $[10,19]$ & $[5,21]$
& $\{5,6,\ldots,21,25\}$ & $22,23,24$
\end{tabular}
\end{table}

For $1\le e<d$, one has
\[
I_{d,e}\subseteq I_{d,d-1}=[d,d^2-d+1]\cap\mathbb Z.
\]
Indeed, $d(d-e)\ge d$, while
\[
d^2-ed+e^2\le d^2-d+1
\]
because $e(d-e)\ge d-1$. Consequently,
\[
\bigcup_{1\le e<d}I_{d,e}
=I_{d,d-1}
=[d,d^2-d+1]\cap\mathbb Z.
\]
After adjoining the formal block $I_{d,0}=\{d^2\}$, the integers missing
from $[d,d^2]\cap\mathbb Z$ are
\[
G_d:=\{d^2-b\mid 1\le b\le d-2\}.
\]
Thus $G_2=\emptyset$, while, for $d\ge3$,
\[
G_d=\{d^2-d+2,\ldots,d^2-1\}.
\]

For completeness, the number of integers separating two consecutive
blocks $I_{d,e}$ and $I_{d,e-1}$ is
\[
\min I_{d,e-1}-\max I_{d,e}-1=d-e^2-1,
\]
whenever this number is positive. Thus these two blocks are separated
precisely when $d>e^2+1$. For $e\ge2$, however, any such local gap is contained in the numerically allowed block $I_{d,d-1}$. The only gap that survives in
the total union is the one between the formal block $e=0$ and the block
$e=1$, namely $G_d$, which has cardinality $d-2$.

\medskip

The behavior of the Bourbaki degree is different. The intervals
\[
B_{d,e}=[0,e^2]\cap\mathbb Z
\]
are nested:
\[
B_{d,1}\subseteq B_{d,2}\subseteq\cdots\subseteq B_{d,d-1}.
\]
Consequently,
\[
\bigcup_{1\le e<d}B_{d,e}
=[0,(d-1)^2]\cap\mathbb Z,
\]
so the Bourbaki degree has no numerical gaps within its numerically
allowed range for $e<d$.

The relation between the two invariants is governed by the Bourbaki
formula. For each fixed $e$,
\[
\deg(R/J)=d^2-ed+e^2-\Bour(J).
\]
Thus, at the numerical level, the two intervals are related by an
order-reversing correspondence. The intervals for $\deg(R/J)$ move as $e$ varies
because their left endpoints are $d(d-e)$, whereas all the intervals
for $\Bour(J)$ begin at $0$.

On the boundary block $e=d$, the Bourbaki formula becomes
\[
\Bour(J)+\deg(R/J)=d^2.
\]
Suppose that $\deg(R/J)=j\in G_d$ for some ideal $J$. Since $j$ does not belong to
any block with $1\le e<d$ and one always has $e\le d$, necessarily
$e=d$. Writing $j=d^2-b$, one then has
\[
\Bour(J)=b,\qquad 1\le b\le d-2.
\]
Hence the gap value $d^2-b$ is filled precisely when the boundary pair
\[
\bigl(e,\deg(R/J)\bigr)=(d,d^2-b),
\]
or equivalently
\[
\bigl(e,\Bour(J)\bigr)=(d,b),
\]
is realizable. The numerical bounds alone do not exclude these boundary
pairs. The next proposition shows that the case $b=1$ cannot occur.

\begin{Proposition}\label{prop:nearly-free-restriction}
Let $R = k[x_1, \ldots, x_n]$, $J = (f_1, f_2, f_3) \subseteq R$ a $d$-equigenerated ideal with $d > 1$, $\hht(J) = 2$. If $e = \indeg(\syz(J))$ and $\deg(R/J) = d(d-e)+e^2-1$, then $2e \leq d+1$.
\end{Proposition}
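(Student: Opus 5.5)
The hypothesis pins down the Bourbaki degree, and the plan is to read off the inequality from the graded structure of the Bourbaki ideal. By the Bourbaki degree formula \eqref{bourequigeneratedformula}, the assumption $\deg(R/J)=d(d-e)+e^2-1=d^2-ed+e^2-1$ says exactly that
\[
\Bour(J)=1 .
\]
In particular $\Bour(J)\neq 0$, so by Proposition~\ref{prop:not-equigen-Bour-consequences}(i) the ideal $J$ is not perfect of height two. We may therefore fix a minimal syzygy $\nu$ of initial (standard) degree $e$ and apply Theorem~\ref{Bour3Gens}(b). Proposition~\ref{prop:not-equigen-Bour-consequences}(ii) then shows that the Bourbaki ideal $I_\nu$ is a complete intersection $(\ell_1,\ell_2)$ of two linear forms.

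Next we track the gradings. In the shifted grading of Section~\ref{sec:3-gen-general} we have $D=3d$, and $\nu$ has shifted degree $d+e$, so Theorem~\ref{Bour3Gens}(b) gives
\[
M_\nu=\syz(J)/R\nu\simeq I_\nu(e-2d).
\]
Since $I_\nu=(\ell_1,\ell_2)$ is minimally generated by two elements of degree $1$, the module $M_\nu$ is minimally generated by two homogeneous elements. Both have shifted degree
\[
1+2d-e=2d-e+1 .
\]

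Now lift these generators. Pick homogeneous $\eta_1,\eta_2\in\syz(J)$ of shifted degree $2d-e+1$ mapping to the two minimal generators of $M_\nu$. These are nonzero homogeneous syzygies; indeed they are not even in $R\nu$, since their images in $M_\nu$ are nonzero. Because $e$ is the initial degree of $\syz(J)$ (shifted initial degree $d+e$), any nonzero homogeneous syzygy has shifted degree at least $d+e$. Applying this to $\eta_1$ gives
\[
d+e\le 2d-e+1 ,
\]
which is precisely $2e\le d+1$.

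The only delicate points are bookkeeping ones. The first is the conversion between the standard degree $e$ and the shifted degree $d+e$ used in Theorem~\ref{Bour3Gens}; this is what produces the twist $e-2d$. The second is checking that the generators of $M_\nu$ really sit in degree $2d-e+1$ rather than a neighbouring degree. No further structure of $J$ is needed.
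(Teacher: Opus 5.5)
Your proof is correct and follows the paper's route: $\Bour(J)=1$ forces the Bourbaki ideal to be generated by two linear forms. Hence $\syz(J)$ has nonzero elements in standard degree $d-e+1$, and minimality of $e$ gives $e\le d-e+1$. The paper obtains this step by citing the nearly-free resolution of $\syz(J)$ from \cite{MFA}; you instead derive the needed degree directly from Theorem~\ref{Bour3Gens}(b) by lifting a generator of $M_\nu$, which is slightly more self-contained.
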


\begin{proof}
The hypothesis and the Bourbaki degree formula give $\Bour(J)=1$.
Consequently, $\syz(J)$ has a minimal graded free resolution of
nearly-free type~\cite[Theorem~2.4 and Proposition~2.9]{MFA}:
\[
0\to R(e-d-2)
\to R(e-d-1)^2\oplus R(-e)
\to\syz(J)\to0.
\]
Since $e$ is the initial degree, one has
\[
e\le d-e+1,
\]
and hence $2e\le d+1$.
\end{proof}

A similar conclusion arises in the \emph{free} case, that is, when
$\Bour(J)=0$. For $e=\indeg(\syz(J))$, one has
$\deg(R/J)=d(d-e)+e^2$. Moreover, in the standard grading,
\[
\syz(J)\simeq R(-e)\oplus R(-(d-e)),
\]
so the minimality of $e$ implies $e\le\lfloor d/2\rfloor$. The following
lemma gives another restriction, this time for high Bourbaki degree.

\begin{Lemma}\label{numerical-reduction}
If $\Bour(J)=(d-1)^2+1$, then $e=d$ and $\deg(R/J)=2d-2$.
\end{Lemma}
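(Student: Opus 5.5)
The plan is to combine the upper bound $\Bour(J)\le e^2$ with the trivial range $1\le e\le d$. Here $J$ is a $d$-equigenerated strict almost complete intersection, so $\hht(J)=2$. Suppose $\Bour(J)=(d-1)^2+1$. If $\Bour(J)=0$ the hypothesis cannot hold, so $J$ is not perfect. Theorem~\ref{BourBoundEquigenerated} then gives
\[
(d-1)^2+1=\Bour(J)\le e^2 .
\]
Hence $e^2>(d-1)^2$, so $e>d-1$, that is, $e\ge d$.

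Next I would bound $e$ from above. The Koszul syzygy $(f_2,-f_1,0)$ is nonzero and has standard degree $d$, so $\indeg(\syz(J))\le d$. Combining the two inequalities gives $e=d$.

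Finally, I substitute $e=d$ into the equigenerated Bourbaki formula \eqref{bourequigeneratedformula}:
\[
\deg(R/J)=d^2-d\cdot d+d^2-\Bour(J)=d^2-(d-1)^2-1=2d-2 .
\]

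No step is a genuine obstacle. The only point needing care is that Theorem~\ref{BourBoundEquigenerated} requires $\hht(J)=2$, which holds by the standing assumption of this subsection. If $\hht(J)=3$ were allowed, then $J$ would be a complete intersection with $\deg(R/J)=0$ and $e=d$, so $\Bour(J)=d^2$. For $d\ge1$ this differs from $(d-1)^2+1$ unless $d=1$, and that case is excluded by $d\ge2$. So the conclusion holds in either case.
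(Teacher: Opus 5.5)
Your proposal is correct and follows the paper's own argument: the Koszul syzygy gives $e\le d$, the bound $\Bour(J)\le e^2$ of Theorem~\ref{BourBoundEquigenerated} gives $e>d-1$, and substituting $e=d$ into \eqref{bourequigeneratedformula} gives $\deg(R/J)=2d-2$. Your extra check of the $\hht(J)=3$ case, where $\Bour(J)=d^2\neq(d-1)^2+1$ for $d\ge2$, is correct but not needed under the paper's standing assumption $\hht(J)=2$.
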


\begin{proof}
Since $J$ is generated by three forms of degree $d$, the Koszul
relations give $e\le d$, while the general bound $\Bour(J)\le e^2$
gives $(d-1)^2+1\le e^2$. Hence $e=d$, and the Bourbaki degree formula
yields $\deg(R/J)=2d-2$.
\end{proof}

This discussion and these formulas are independent of the dimension
$n=\dim R$. When $n=3$, the ideal $J$ may be of \emph{gradient type},
that is,
\[
J=J_F=(F_x,F_y,F_z)
\]
for some reduced homogeneous polynomial $F\in R=k[x,y,z]$ of degree
$d+1$. In this case, $\deg(R/J_F)=\tau(F)$ is the global Tjurina number
of the reduced plane curve $V(F)\subseteq\PP^2$. The preceding
discussion therefore specializes directly to the projective plane curve
setting, as in the classical work of \cite[Section~4]{CTC-Plessis}.

Motivated by these different possible behaviors, we introduce the
following notions.

\begin{Definition}\label{defi:gaps}
Let $n=\dim R\ge3$ and fix $d\ge2$. A pair $(e,j)$ with $1\le e\le d$
is \emph{numerically admissible} if $j\in I_{d,e}$.
\begin{itemize}
\item[(a)] A numerically admissible pair $(e,j)$ that cannot be realized
by any $d$-equigenerated ideal $J=(f_1,f_2,f_3)$ of height $2$ in
dimension $n$ is called a \emph{structural gap} in degree $d$ and
dimension $n$. Otherwise, the pair is called \emph{realizable}.

\item[(b)] A pair that is a structural gap in some dimension $n_0$ but
is realizable in a higher dimension is called a \emph{dimensional gap}.

\item[(c)] When $n=3$, a realizable pair $(e,\deg(R/J))$ is an
\emph{integrable gap} in degree $d$ if there is no reduced homogeneous
polynomial $F\in R_{d+1}$ such that $J_F$ realizes the pair.
\end{itemize}
\end{Definition}

Proposition~\ref{prop:nearly-free-restriction} gives the structural gap
$(d,d^2-1)$ in every dimension $n\ge3$ and every degree $d\ge2$. For
$d=2$, the pair $(2,2)$ is a dimensional gap: it cannot be realized in
dimension $3$ by Theorem~\ref{low-degree-gap}, but it is realized in
dimension $4$ by
\[
J=(x_1x_4,x_2x_3,x_1x_3-x_2x_4).
\]
Consequently, it is realizable in every dimension $n\ge4$. More
generally, if a pair $(e,j)$ is realized by an ideal
$J\subseteq k[x_1,\ldots,x_n]$, then it is realizable in every dimension
$m\ge n$ by considering the flat extension
$\widetilde J=Jk[x_1,\ldots,x_m]$.

The integrable gaps described in Definition~\ref{defi:gaps}{\rm(c)}
mimic the phenomenon observed for $2\times4$ matrices in
\cite[Section~4]{MFA}, where the gap $\Bour(A)\neq2$ for Jacobian
matrices is filled by a non-Jacobian matrix. We may ask the following.

\begin{Question}\label{question:integrable-gaps}
When $n=3$ and $J=(f_1,f_2,f_3)\subseteq R$ is a $d$-equigenerated
ideal, are there realizable pairs $(e,\deg(R/J))$ which are integrable
gaps?
\end{Question}

For $d\leq 3$ the answer is negative: all realizable pairs are
already achieved by gradient ideals, as will be shown in
Subsection~\ref{subsec:ideals-quadrics} for $d = 2$ and in Section~\ref{sec:quartic-curves} for $d = 3$. In fact, for $d = 2, 3$, we
prove the stronger statement that every possible minimal graded free resolution in dimension $3$ can be realized by a gradient ideal for these degrees.

In the free and nearly free cases, the only restrictions are
$e\le\lfloor d/2\rfloor$ and $e\le\lfloor(d+1)/2\rfloor$, respectively.
When $n=3$, every corresponding free or nearly free pair $(e,j)$ is
realizable by a gradient ideal; see
\cite[Theorems~1.1, 1.2, and 3.1]{dimca2017exponents}.

\section{d--equigenerated ideals of homological dimension 2}\label{sec:hom-dim-2}

Let $J=(f_1,f_2,f_3)\subseteq R$ be a $d$-equigenerated strict almost
complete intersection of height $2$, with $d\ge2$. Throughout this
section, we assume that $J$ has projective dimension $2$; for example,
this always holds when $n=3$ and $\Bour(J)\neq0$. According to
\cite{BFRS}, the minimal graded free resolution of $R/J$ has the form
\begin{equation}\label{res-prelim-bis}
0\to
\bigoplus_{j=1}^{m-2}R(-d-\delta_{j+2}-\epsilon_j)
\to
\bigoplus_{i=1}^{m}R(-d-\delta_i)
\to R(-d)^3\to R\to R/J\to0,
\end{equation}
for certain integers $m\ge3$,
$1\le\delta_1\le\delta_2\le\delta_3\le\cdots\le\delta_m$ and positive integers $\epsilon_1,\ldots,\epsilon_{m-2}$ satisfying
\begin{equation}\label{Dnondecresing}
\delta_3+\epsilon_1\le\cdots\le\delta_m+\epsilon_{m-2},
\end{equation}
\begin{equation}\label{sumHS-c1}
\delta_1+\delta_2
=d+\sum_{j=1}^{m-2}\epsilon_j,
\end{equation}
\begin{equation}\label{c2}
\delta_i+\delta_j\ge d+1
\qquad\text{for every }1\le i<j\le m,
\end{equation}
and
\begin{equation}\label{c3}
\delta_3\le d.
\end{equation}

\begin{Remark}\label{numberofsyzygies}
Since the $\epsilon_i$ are positive, \eqref{sumHS-c1}, the ordering of
the $\delta_i$, and \eqref{c3} give
\[
m-2
\le\sum_{i=1}^{m-2}\epsilon_i
=\delta_1+\delta_2-d
\le d.
\]
\end{Remark}

\subsection{Low-degree gaps for the Bourbaki degree}

We begin with two low-degree gaps for the Bourbaki degree of ideals of
homological dimension $2$.

\begin{Theorem}\label{low-degree-gap}
Let $R=k[x_1,\ldots,x_n]$, with $n\ge3$, and let
$J=(f_1,f_2,f_3)\subseteq R$ be a $d$-equigenerated strict almost
complete intersection of height $2$ and homological dimension $2$. Then:
\begin{enumerate}
\item[{\rm(a)}] If $d=2$, then $\Bour(J)\neq2$.
\item[{\rm(b)}] If $d=3$, then $\Bour(J)\neq5$.
\end{enumerate}
\end{Theorem}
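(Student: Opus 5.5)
The plan is to turn both statements into a finite check on the numerical data \eqref{res-prelim-bis}--\eqref{c3} of the minimal resolution from \cite{BFRS}. With $e$ the standard-graded initial syzygy degree, we have $e=\delta_1$. Reading off the Hilbert numerator
\[
h(t)=1-3t^d+\sum_{i=1}^m t^{d+\delta_i}-\sum_{j=1}^{m-2}t^{d+\delta_{j+2}+\epsilon_j},
\]
one gets $\deg(R/J)=h''(1)/2$. Then \eqref{bourequigeneratedformula} gives $\Bour(J)=d^2-d\delta_1+\delta_1^2-\deg(R/J)$ as an explicit polynomial in the $\delta_i,\epsilon_j$. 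Alternatively, one can compute $\Bour(J)=\deg(R/I_\nu)$ directly from the resolution of $I_\nu$ in Theorem~\ref{Bour3Gens}(c), which uses the shifts $\delta_2,\dots,\delta_m$ and the $\epsilon_j$. Both routes must agree, and I would use whichever is shorter.

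The first step reduces to the boundary block. By Theorem~\ref{BourBoundEquigenerated}, $\Bour(J)\le e^2$, and the Koszul relations give $e\le d$.
\begin{itemize}
\item[(a)] If $d=2$ and $\Bour(J)=2$, then $e^2\ge 2$ forces $e=2$, so $\deg(R/J)=4-2=2$.
\item[(b)] If $d=3$ and $\Bour(J)=5$, then $e^2\ge5$ forces $e=3$, so $\deg(R/J)=9-5=4$.
\end{itemize}
In both cases $\delta_1=d$. Then \eqref{c3} and the ordering give $\delta_1=\delta_2=\delta_3=d$, and \eqref{sumHS-c1} gives $\sum_j\epsilon_j=d$. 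Remark~\ref{numberofsyzygies} gives $3\le m\le d+2$.

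The second step is the enumeration. The only unknowns are the compositions $(\epsilon_1,\dots,\epsilon_{m-2})$ of $d$ and the shifts $\delta_4\le\dots\le\delta_m$, which are at least $d$ and subject to \eqref{Dnondecresing}.
\begin{itemize}
\item[(a)] For $d=2$ the cases are $m=3$ with $\epsilon_1=2$, and $m=4$ with $\epsilon=(1,1)$ and $\delta_4\ge 2$.
\item[(b)] For $d=3$ the cases are $m=3$ with $\epsilon_1=3$; $m=4$ with $\epsilon\in\{(1,2),(2,1)\}$; and $m=5$ with $\epsilon=(1,1,1)$.
\end{itemize}
For $m\ge4$, the free parameters $\delta_4,\dots,\delta_m$ enter $h''(1)/2$. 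In each case I would compute $\deg(R/J)$ in closed form and check that it never equals $2$ (for $d=2$) or $4$ (for $d=3$). For instance, for $m=3$ the numerator $h(t)=1-3t^d+3t^{2d}-t^{3d}=(1-t^d)^3$ gives $\deg(R/J)=0$. This is impossible, since $\hht(J)=2$.

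The step I expect to be the main obstacle is the cases $m\ge4$, where $\delta_4,\dots,\delta_m$ are a priori unbounded. There, $\deg(R/J)$ becomes a nonconstant function of these shifts, for example a linear or quadratic expression in $\delta_4-d$ with the $\epsilon$'s fixed. I would handle it in three stages.
\begin{itemize}
\item[1.] Impose the vanishing $h(1)=h'(1)=0$. This is forced by $\hht(J)=2$, and I expect it to pin the $\delta$'s down to finitely many possibilities or to a single arithmetic relation.
\item[2.] Use the positivity $0<\deg(R/J)$ and $0\le\Bour(J)\le e^2$ to bound what remains.
\item[3.] Evaluate the surviving finitely many configurations and observe that the target value $2$ (respectively $4$) is missed. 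For $d=2$, $\deg(R/J)=2$ would mean $\Bour(J)=2$ with $e=2$, and I expect this to contradict the integrality and parity of $h''(1)/2$ in every admissible configuration.
\end{itemize}
If some configuration numerically survives, I would instead argue with the Bourbaki ideal. Its resolution from Theorem~\ref{Bour3Gens}(c) has length two and is non-Cohen--Macaulay. Comparing it with the classification in Proposition~\ref{prop:not-equigen-Bour-consequences}(iii) of unmixed degree-$2$ ideals (for $d=2$), and with the analogous possibilities in degree $5$ (for $d=3$), should exclude the remaining shapes.
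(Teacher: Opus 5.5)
Your proposal is correct and follows essentially the same route as the paper. You reduce via $\Bour(J)\le e^2$ and $e\le d$ to $\delta_1=\delta_2=\delta_3=d$ with $\deg(R/J)=2$ (resp.\ $4$), enumerate the same $\underline{\epsilon}$-compositions, and read $\deg(R/J)=h''(1)/2$ off the Hilbert numerator. The obstacle you anticipated does not arise: $h(1)=h'(1)=0$ holds automatically from $\delta_1+\delta_2=d+\sum_j\epsilon_j$, and the closed forms $0,\ 3-\delta_4$ for $d=2$ and $0,\ 8-2\delta_4,\ 5-\delta_4,\ 9-\delta_4-\delta_5$ for $d=3$ are decreasing in the free shifts, so $\delta_i\ge d$ alone excludes the values $2$ and $4$ with no need for the fallback via the Bourbaki ideal.
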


\begin{proof}
For $\mathrm{(a)}$, assume that $d = 2$ and $\Bour(J)=2$. By Lemma~\ref{numerical-reduction}, we get $e=2$ and $\deg(R/J)=2$.  Let 
\begin{equation}\label{res-prelimJ=2}
	0\to \bigoplus_{j=1}^{m-2} R(-2-\delta_{j+2}-\epsilon_j)\to \bigoplus_{i=1}^{m} R(-2-\delta_i)\to R(-2)^3\to J \to 0,
\end{equation} be the minimal graded free resolution of $J$. Since $\delta_1=e=d$  then, from the condition \eqref{c3}, we conclude that 
\begin{equation}
\delta_1=\delta_2=\delta_3=2
\end{equation} 
Using these equalities, together with conditions \eqref{Dnondecresing}-\eqref{c3} and the Remark \ref{numberofsyzygies}, we conclude that the possible vectors $(m;\delta_1,\ldots,\delta_m;\epsilon_1,\ldots,\epsilon_{m-2})$ are:
\[ (3;2,2,2;2)\,\,\mbox{or}\,\, (4;2,2,2,\delta_4;1,1)\]
Now, using the resolution \eqref{res-prelimJ=2} to calculate the degree of $R/J$ in each of these possibilities we get:
\[\deg(R/J)=0\,\,\mbox{or}\,\,\deg(R/J)=3-\delta_4\]
On the other hand, $\deg(R/J)=2$. Thus, $3-\delta_4=2,$ that is, $\delta_4=1$. But this is a contradiction because $2=\delta_3\leq \delta_4$. 

(b) Assume that $\Bour(J)=5$. 
By Lemma~\ref{numerical-reduction}, we get $e=3$ and $\deg(R/J)=4$.
Let 
\begin{equation}\label{res-prelimJ=3}
	0\to \bigoplus_{j=1}^{m-2} R(-3-\delta_{j+2}-\epsilon_j)\to \bigoplus_{i=1}^{m} R(-3-\delta_i)\to R(-3)^3\to J \to 0,
\end{equation} be the minimal graded free resolution of $J$. Since $\delta_1=e=d$  then, from the condition \eqref{c3}, we conclude that  $\delta_1=\delta_2=\delta_3=d$. Together with conditions \eqref{Dnondecresing}-\eqref{c3} and the Remark \ref{numberofsyzygies}, we conclude that the possible vector $(m;\delta_1,\ldots,\delta_m;\epsilon_1,\ldots,\epsilon_{m-2})$ are:
\[(3;3,3,3;3),\,\, (4;3,3,3,\delta_4;1,2),\,\, (4;3,3,3,\delta_4;2,1)\,\,\mbox{or}\,\, (5;3,3,3,\delta_4,\delta_5;1,1,1)\]
Now, using the resolution \eqref{res-prelimJ=3} to calculate the degree of $R/J$ in each of these possibilities we get:
\[\deg (R/J)=0,\,\,\deg(R/J)=-2\delta_4+8,\,\,\deg(R/J)=-\delta_4+5\,\,\mbox{or}\,\,\deg(R/J)=-\delta_4-\delta_5+9.\]
On the other hand, from Lemma \ref{numerical-reduction} we have $\deg(R/J)=4$. Thus, we have one of the following possibilities:
\[\delta_4=2,\quad \delta_4=1\quad \mbox{or}\quad \delta_4+\delta_5=5\]
But this is a contradiction because $3=\delta_3\leq \delta_4\leq\delta_5$.
\end{proof}

The hypothesis on the homological dimension is essential in
Theorem~\ref{low-degree-gap}{\rm(a)}. Indeed, in
$R=k[x_1,x_2,x_3,x_4]$, the ideal $J=(x_1x_4,x_2x_3,x_1x_3-x_2x_4)$ satisfies
$\indeg(\syz(J))=2$, $\deg(R/J)=2$ and $\Bour(J)=2$. Thus the value $2$ is realizable in dimension $4$, although not by an ideal of homological dimension $2$.
The cases $d=2$ and $d=3$ suggest the tempting expectation that
\[
\Bour(J)\neq(d-1)^2+1
\qquad\text{for every }d\ge2.
\]
The following family shows that this fails for every $d\ge4$.

\begin{Proposition}\label{prop:Jd-family}
	Let $R=k[x,y,z]$ and let $d\geq 4$. Consider
	\[
	J_d=\bigl(
	x^d,\ 
	y^{d-1}z-(d-2)xyz^{d-2},\ 
	y^d-(d-1)xy^{d-2}z+x^{d-2}z^2
	\bigr).
	\]
	Then $\hht(J_d)=2$,  $\deg(R/J_d)=2d-2$ and  $\indeg(\syz(J_d))=d$. Consequently,
	\[
	\Bour(J_d)=(d-1)^2+1.
	\]
\end{Proposition}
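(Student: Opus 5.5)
The plan is to verify the three numerical claims separately. The Bourbaki value then follows at once from \eqref{bourequigeneratedformula}: with $e=d$ and $\deg(R/J_d)=2d-2$,
\[
\Bour(J_d)=d^2-d^2+d^2-(2d-2)=(d-1)^2+1 .
\]
Write $f_1=x^d$, $f_2=y^{d-1}z-(d-2)xyz^{d-2}=yz\,(y^{d-2}-(d-2)xz^{d-3})$ and $f_3=y^d-(d-1)xy^{d-2}z+x^{d-2}z^2$.

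\emph{Height.} On $V(J_d)$, the equation $x^d=0$ forces $x=0$, and then $f_3=0$ forces $y=0$. So $V(J_d)\subseteq\PP^2$ is the single point $P=[0:0:1]$ and $\hht(J_d)=2$; in particular $\gcd(f_1,f_2,f_3)=1$. Minimal generation is clear from the monomial supports.

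\emph{Degree.} Since $J_d$ is supported at one point, $\deg(R/J_d)$ is the codimension-two degree, namely the length of the local ring of $R/J_d$ at $P$. I would dehomogenize at $z=1$ and compute
\[
\length\, k[x,y]_{(x,y)}/(x^d,\ y^{d-1}-(d-2)xy,\ y^d-(d-1)xy^{d-2}+x^{d-2}).
\]
The route is a standard basis computation in a local (negative degree) ordering.
\begin{itemize}
\item Use $g=y\,(y^{d-2}-(d-2)x)$ to rewrite $xy\equiv y^{d-1}/(d-2)$.
\item Reduce $h$: its term $xy^{d-2}$ becomes a multiple of $y^{2d-4}$. Because $d\ge4$ we have $2d-4\ge d$, so $h\equiv x^{d-2}+y^d+(\text{higher order})$.
\item Then $x^{d-1}\equiv -xy^d+\cdots$ becomes highly divisible by $y$, and combining with $x^d$ and $g$ we expect the leading ideal $(xy,\ y^{d-1},\ x^{d-2})$. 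Its standard monomials are $1,x,\dots,x^{d-3}$ and $y,\dots,y^{d-2}$, which gives length $(d-2)+(d-1)+\cdots$; I would pin down the exact count to $2d-2$ by checking the remaining S-polynomials.
\end{itemize}
This step is where I expect the main obstacle: the bookkeeping must be done carefully and may need a computer check for small $d$ to guide the general argument.

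As a cross-check on the degree, I would use the factorization $g=y\cdot u$ with $u=y^{d-2}-(d-2)x$ a smooth branch. The length should split as a contribution along $y=0$, where $h$ restricts to $x^{d-2}$, plus the contribution along $u=0$, as a colength computation.

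\emph{Initial degree.} Granting $\deg(R/J_d)=2d-2$, the generalized du Plessis--Wall bound of Theorem~\ref{BourBoundEquigenerated} gives $d(d-e)\le 2d-2<2d$. Hence $d-e\le1$, i.e.\ $e\ge d-1$. The Koszul syzygies give $e\le d$, so it remains only to exclude $e=d-1$. For this, suppose $af_1+bf_2+cf_3=0$ with $\deg a=\deg b=\deg c=d-1$, and reduce modulo $x$ to get
\[
\bar b\,y^{d-1}z+\bar c\,y^d=0,
\qquad\text{so}\qquad
\bar b=\lambda y,\quad \bar c=-\lambda z .
\]
Then I would lift to $b=\lambda y+xb'$ and $c=-\lambda z+xc'$ and compare coefficients of the monomials $x^iy^jz^k$ in low $x$-degree. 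The terms $(d-2)xy^2z^{d-2}$ and $x^{d-2}z^3$ coming from $\lambda$ cannot be cancelled by $x^d a$ or by the $x$-multiples in degree $2d-1$. This forces $\lambda=0$, and an induction on $x$-adic order then gives $b=c=0$ and so $a=0$. Consequently $\indeg(\syz(J_d))=d$, and the Bourbaki formula yields the stated value.
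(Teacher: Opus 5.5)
Your height argument is correct: it shows $V(J_d)=\{[0:0:1]\}$, which also justifies dehomogenizing at $z$. The closing Bourbaki arithmetic is also right. But both computational steps have genuine gaps.

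\textbf{The degree.} The standard basis you predict is wrong. The ideal $(xy,y^{d-1},x^{d-2})$ has only $2d-4$ standard monomials, so it contradicts the value you want. In a local degree order the initial terms of $g$ and $h$ are $xy$ and $x^{d-2}$. The $S$-pair of $g$ and $h$ reduces, via $xy\equiv y^{d-1}/(d-2)$, to a nonzero multiple of $y^{d+1}$ plus higher-order terms. The initial ideal turns out to be $(xy,x^{d-2},y^{d+1})$, with standard monomials $1,x,\dots,x^{d-3},y,\dots,y^{d}$. In any case you leave this computation undone. Your ``cross-check'' is actually the paper's proof: the exact sequence
\[
0\to B/(I_d:y)\xrightarrow{\cdot y}B/I_d\to B/(I_d,y)\to0,\qquad B=k[x,y],
\]
with $(I_d,y)=(y,x^{d-2})$. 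Its real content is the colon identity $(I_d:y)=(x^d,q,h)$, where $q=y^{d-2}-(d-2)x$, together with $B/(x^d,q,h)\simeq k[y]/(y^d)$. The obvious inclusion $\supseteq$ only gives $\deg(R/J_d)\le 2d-2$. The paper proves equality as follows. Write $ay=Ax^d+Byq+Ch$ and reduce modulo $y$ to get $C=-Ax^2+yD$. Then $a=AT+Bq+Dh$ with $T=-x^2y^{d-1}+(d-1)x^3y^{d-3}$, and $T\in(x^d,q,h)$.

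\textbf{The initial degree.} Reducing to coefficient degree $d-1$ is fine. The detour through Theorem~\ref{BourBoundEquigenerated} is unnecessary, though: multiplying a lower-degree syzygy by a power of $z$ already reduces to degree $d-1$, as the paper does. The exclusion step, however, rests on a false claim. Here $\lambda\in k[y,z]_{d-2}$. After dividing by $x$, the comparison at $x$-adic order one is
\[
\lambda\,y^2z^2\bigl((d-1)y^{d-4}-(d-2)z^{d-4}\bigr)+y^{d-1}\bigl(\bar b'z+\bar c'y\bigr)=0 .
\]
Note that you omitted the contribution of $-(d-1)xy^{d-2}z$. This equation only forces $y^{d-3}\mid\lambda$. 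For any such $\lambda$ it is solved by suitable $\bar b',\bar c'$. So the $\lambda$-terms \emph{are} cancelled by $xb'f_2+xc'f_3$, and nothing forces $\lambda=0$ at this stage.

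In the paper's dehomogenized recursion, the next order pins $\lambda$ down to a scalar multiple of $y^{d-2}$. That scalar (the paper's $\beta$) survives the low orders. It is eliminated only much later, through the summand $x^{d-2}z^2$ of $f_3$. This happens only after one shows that all intermediate coefficients $b_i,c_i$ vanish or take prescribed shapes, via auxiliary parameters, with $d=4$ and $d=5$ treated separately. Your ``induction on $x$-adic order'' is exactly this computation, and it is what actually proves $\indeg(\syz(J_d))=d$. As written, the proposal does not supply it.
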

\begin{proof}
Set
\[
f_1=x^d,\qquad
f_2=y^{d-1}z-(d-2)xyz^{d-2},\qquad
f_3=y^d-(d-1)xy^{d-2}z+x^{d-2}z^2.
\]
Since $f_1=x^d$ and $f_3\notin(x)$, the generators have no common
divisor, so $\hht(J_d)\ge2$. Since $[0:0:1]\in V(J_d)$, we conclude
that $\hht(J_d)=2$.

We first compute the degree. Since $V(J_d)\cap V(z)=\emptyset$, we may
dehomogenize with respect to $z$. Thus
\[
\deg(R/J_d)=\length_k(B/I_d),
\]
where $B=k[x,y]$ and
\[
I_d=(x^d,yq,h),
\quad
q=y^{d-2}-(d-2)x,
\quad
h=y^d-(d-1)xy^{d-2}+x^{d-2}.
\]
Multiplication by $y$ gives an exact sequence
\[
0\to B/(I_d:y)\xrightarrow{\cdot y}B/I_d\to B/(I_d,y)\to0.
\]
Now $(I_d,y)=(y,x^{d-2})$, and hence
\[
\length_k(B/(I_d,y))=d-2.
\]

We claim that
\[
(I_d:y)=(x^d,q,h)=:K.
\]
Modulo $q$, one has $x=y^{d-2}/(d-2)$, and a direct substitution shows
that
\[
B/K\simeq k[y]/(y^d).
\]
Indeed, the image of $h$ is $y^d$ times a polynomial coprime to $y$:
this polynomial is $-1/4$ if $d=4$ and has constant term $1$ if $d>4$.

The inclusion $K\subseteq(I_d:y)$ is clear. Conversely, if
\[
ay=Ax^d+B\,yq+Ch,
\]
then, modulo $y$, one has $Ax^2+C\in(y)$, so $C=-Ax^2+yD$ for some
$D\in B$. Therefore
\[
ay=y(AT+Bq+Dh),
\qquad
T=-x^2y^{d-1}+(d-1)x^3y^{d-3}.
\]
The image of $T$ in $B/K\simeq k[y]/(y^d)$ is zero, so $T\in K$ and
hence $a\in K$. Thus $(I_d:y)=K$, and consequently
\[
\deg(R/J_d)
=\length_k(B/I_d)
=d+(d-2)=2d-2.
\]

It remains to prove that $e=\indeg(\syz(J_d))=d$. Since the Koszul
relations have coefficient degree $d$, one has $e\le d$. Suppose that
\[
af_1+bf_2+cf_3=0
\]
with $a,b,c\in R_m$ and $m<d$. It is enough to consider $m=d-1$, since
a syzygy of smaller degree may be multiplied by a suitable power of
$z$.

Dehomogenizing with respect to $z$, write
\[
b=\sum_{i=0}^{d-1}x^ib_i(y),
\qquad
c=\sum_{i=0}^{d-1}x^ic_i(y),
\qquad
\deg b_i,\deg c_i\le d-1-i.
\]
Since $bf_2+cf_3\in(x^d)$, comparison of the coefficient of $x^r$ gives
\begin{equation}\label{eq:Jd-coefficients}
y^{d-1}(b_r+yc_r)
-(d-2)yb_{r-1}
-(d-1)y^{d-2}c_{r-1}
+c_{r-d+2}=0,
\end{equation}
where $b_j=c_j=0$ outside the range $0\le j\le d-1$.

Put $N=d-2$, $P_i=b_i+yc_i$, and
\[
U(y)=N-(N+1)y^{N-2}.
\]
Then \eqref{eq:Jd-coefficients} becomes
\begin{equation}\label{eq:Jd-recursion-expanded}
y^{N+1}P_i-NyP_{i-1}+y^2U(y)c_{i-1}+c_{i-N}=0
\qquad(0\le i\le N+1),
\end{equation}
with out-of-range indices interpreted as zero and
$\deg b_i,\deg c_i\le N+1-i$.

\smallskip\noindent
\emph{The case $N\ge4$.}
The equation for $i=0$ gives $P_0=0$, hence $b_0=-yc_0$ and
$\deg c_0\le N$. For $i=1$ we get
\[
y^{N-1}P_1=-U(y)c_0.
\]
Since $U(0)=N\neq0$, there are $\alpha,\beta\in k$ such that
\[
c_0=y^{N-1}(\alpha+\beta y),\qquad
P_1=-U(y)(\alpha+\beta y).
\]
The equation for $i=2$ is therefore
\[
y^{N+1}P_2=-yU(y)\bigl(N\alpha+N\beta y+yc_1\bigr).
\]
Divisibility by $y^{N+1}$ first gives $\alpha=0$, and then
$N\beta+c_1\in(y^{N-1})$. Write
$c_1=-N\beta+y^{N-1}(\gamma+\delta y)$.
The identity $b_1=P_1-yc_1$ gives
\[
b_1=(N+1)\beta y^{N-1}-\gamma y^N-\delta y^{N+1}.
\]
Since $\deg b_1\le N$, we have $\delta=0$. Consequently,
\[
\begin{gathered}
c_0=\beta y^N,\qquad b_0=-\beta y^{N+1},\\
c_1=-N\beta+\gamma y^{N-1},\qquad
b_1=(N+1)\beta y^{N-1}-\gamma y^N,\qquad
P_2=-\gamma U(y).
\end{gathered}
\]
For $i=3$, the coefficient of $y$ in
\eqref{eq:Jd-recursion-expanded} is $N^2\gamma$, so $\gamma=0$
and $P_2=0$.

We now use induction. If $2\le i\le N-2$ and $P_i=0$, then the
equation with index $i+1$ gives
\[
y^{N-1}P_{i+1}=-U(y)c_i.
\]
Thus $y^{N-1}\mid c_i$. But $b_i=-yc_i$ and the degree bound on
$b_i$ give $\deg c_i\le N-i<N-1$. Hence $c_i=b_i=0$ and
$P_{i+1}=0$. We conclude that
\[
b_i=c_i=0\quad(2\le i\le N-2),\qquad P_{N-1}=0.
\]
The latter identity and the degree bounds imply
$c_{N-1}=u+vy$ and $b_{N-1}=-uy-vy^2$ for some $u,v\in k$.
The equation with index $N$ reads
\[
y^{N+1}P_N+y^2U(y)(u+vy)+\beta y^N=0.
\]
Since $N\ge4$, its coefficients of $y^2$ and $y^3$ give
$u=v=0$. Its coefficient of $y^N$ then gives $\beta=0$, and
therefore $P_N=0$.
Now $c_N=w$ and $b_N=-wy$ for some $w\in k$. The final equation,
with index $N+1$, becomes
\[
y^{N+1}P_{N+1}+wy^2U(y)=0.
\]
Its coefficient of $y^2$ gives $w=0$, and hence $P_{N+1}=0$.
As $b_{N+1}$ and $c_{N+1}$ are constants, they also vanish.
Thus all the $b_i,c_i$ are zero.

\smallskip\noindent
\emph{The case $N=3$, or $d=5$.}
Here $U(y)=3-4y$. The same calculations with indices $0,1,2$ give
\[
\begin{gathered}
c_0=\beta y^3,\quad b_0=-\beta y^4,\\
c_1=-3\beta+\gamma y^2,\quad
b_1=4\beta y^2-\gamma y^3,\quad
P_2=-\gamma(3-4y).
\end{gathered}
\]
The degree bound on $b_2=P_2-yc_2$ implies $c_2=u+vy$.
The equation with index $3$ is
\[
9\gamma y+(-12\gamma+3u)y^2
+(\beta+3v-4u)y^3+y^4(P_3-4v)=0.
\]
It follows that $\gamma=u=0$, $\beta=-3v$, and $P_3=4v$.
Consequently $c_3=w$ and $b_3=4v-wy$ for some $w\in k$.
The equation with index $4$ is now
\[
9v-12vy+3wy^2-4wy^3+y^4P_4=0.
\]
Thus $v=w=0$ and $P_4=0$. All the coefficients vanish.

\smallskip\noindent
\emph{The case $N=2$, or $d=4$.}
Here $U(y)=-1$. The equations with indices $0,1,2$ and the degree
bounds give
\[
\begin{gathered}
c_0=\beta y^2,\qquad b_0=-\beta y^3,\\
c_1=-\beta+\gamma y,\qquad b_1=2\beta y-\gamma y^2,\\
c_2=\eta,\qquad b_2=\gamma-\eta y.
\end{gathered}
\]
Indeed, the first two equations give $P_0=0$ and $c_0=yP_1$,
so $P_1=\alpha+\beta y$; the equation with index $2$ forces
$\alpha=0$, and the bound on $b_1$ forces $P_2$ to be a constant
$\gamma$. The displayed expressions follow.
Finally, the equation with index $3$ reads
\[
y^3P_3-\beta-\gamma y-\eta y^2=0.
\]
Hence $\beta=\gamma=\eta=0$ and $P_3=0$, so all the coefficients
vanish in this case as well.

Therefore no nonzero
syzygy has coefficient degree smaller than $d$, and consequently
\[
\indeg(\syz(J_d))=d.
\]
Finally, the Bourbaki degree formula gives
\[
\Bour(J_d)
=d^2-\deg(R/J_d)
=d^2-(2d-2)
=(d-1)^2+1.
\]
\end{proof}

\begin{Example}\label{ex:quintic-cuspidal}
Consider the quintic plane curve defined by
\[
f=x^3y^2+y^3z^2+z^3x^2.
\]
Its Jacobian ideal $J_F$ is generated by three quartics. The curve has three cusp singularities at coordinate points $p_1=[1:0:0],\, p_2=[0:1:0],\, p_3=[0:0:1]$, so
\[
\tau(f)=6=\deg(R/J_F).
\]
Moreover, a direct computation shows that $e=\indeg(\syz(J_F))=4$. 
Therefore
\[
\Bour(J_F)=4^2-6=10=(4-1)^2+1.
\]
Thus the Jacobian ideal of this quintic provides another counterexample with quartic generators.
\end{Example}

The preceding discussion leads naturally to the following problem.

\begin{Question}
For each integer $d\ge4$, determine the set of possible values of
$\Bour(J)$, where $J\subseteq k[x,y,z]$ runs through all
$d$-equigenerated strict almost complete intersections of height $2$.
Equivalently, determine the gaps among the numerically admissible
Bourbaki degrees of such ideals.
\end{Question}

\begin{Remark}
The argument used in Lemma~\ref{numerical-reduction} shows that any value
\[
\Bour(J)>(d-1)^2
\]
forces $e=d$. Indeed, since $\Bour(J)\le e^2$ and $e\le d$, the inequality
$\Bour(J)>(d-1)^2$ implies $e>d-1$, and hence $e=d$. The Bourbaki degree
formula then gives
\[
\deg(R/J)=d^2-\Bour(J).
\]
Thus the problem of understanding the upper range of possible Bourbaki
degrees is equivalent to determining which values of $\deg(R/J)$ can
occur in the extremal case $e=d$.
\end{Remark}

\subsection{Ideals with maximal degree in codimension two}
\label{subsec:ideals-max-degree}

When $J=(f_1,f_2,f_3)$ is a $d$-equigenerated strict almost complete intersection of height $2$ and homological dimension $2$, we prove the sharper bound $\deg(R/J)\le d^2-d$, which improves the general bound $\deg(R/J)\le d^2$. We also describe the possible minimal free resolutions in the extremal case $\deg(R/J)=d^2-d$.

\begin{Lemma}\label{upperboundinitialdegree}
Let $R=k[x_1,\ldots,x_n]$, with $n\ge3$, and let $J\subseteq R$ be a $d$-equigenerated strict almost complete intersection of height $2$ and homological dimension $2$. If $\deg(R/J)=d^2-d+\alpha$ for some $0\le\alpha\le d-1$, then $e=\indeg(\syz(J))\le d-1$.
\end{Lemma}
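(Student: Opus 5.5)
The plan is to argue by contradiction, assuming $e=d$ and using the structure of the minimal graded free resolution \eqref{res-prelim-bis} from \cite{BFRS}, exactly as in the proof of Theorem~\ref{low-degree-gap}. Since the Koszul relations give $e\le d$, it suffices to exclude $e=d$. In the notation of \eqref{res-prelim-bis}, $\delta_1=\indeg(\syz(J))=e$. So if $e=d$, the ordering $\delta_1\le\delta_2\le\delta_3$ together with \eqref{c3} forces $\delta_1=\delta_2=\delta_3=d$. Then \eqref{sumHS-c1} gives $\sum_{j=1}^{m-2}\epsilon_j=d$, and the ordering gives $\delta_{j+2}\ge d$ for every $j$.

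Next I will compute $\deg(R/J)$ for a general resolution of shape \eqref{res-prelim-bis}. The Hilbert numerator is
\[
h(t)=1-3t^d+\sum_{i=1}^m t^{d+\delta_i}-\sum_{j=1}^{m-2}t^{d+\delta_{j+2}+\epsilon_j},
\]
and $\deg(R/J)=h''(1)/2$. Because $h(1)=0$ (rank count $1-3+m-(m-2)=0$) and $h'(1)=0$ (which is exactly \eqref{sumHS-c1}), the linear parts of the binomial coefficients cancel. This leaves
\[
\deg(R/J)=\tfrac12\Bigl[-3d^2+\sum_{i=1}^m(d+\delta_i)^2-\sum_{j=1}^{m-2}(d+\delta_{j+2}+\epsilon_j)^2\Bigr].
\]
I will pair the index $i=j+2$ with $j$, for $3\le i\le m$. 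Substituting $\delta_1=\delta_2=d$ then gives
\[
\deg(R/J)=\tfrac12\Bigl[5d^2-\sum_{j=1}^{m-2}\epsilon_j\bigl(2d+2\delta_{j+2}+\epsilon_j\bigr)\Bigr].
\]
As a sanity check, the case $(3;2,2,2;2)$ gives $0$, matching the computation in Theorem~\ref{low-degree-gap}.

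Finally I will estimate the sum. Using $\delta_{j+2}\ge d$, $\sum\epsilon_j=d$, and $\epsilon_j^2\ge\epsilon_j$ (since $\epsilon_j\ge1$), the sum is at least $4d\cdot d+d$. Hence
\[
\deg(R/J)\le\tfrac12\bigl(5d^2-4d^2-d\bigr)=\tfrac{d^2-d}{2}.
\]
For $d\ge2$ this is strictly less than $d^2-d\le d^2-d+\alpha$, contradicting the hypothesis. Hence $e\le d-1$.

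The main point requiring care is the bookkeeping in the second-derivative computation: the cancellations coming from $h(1)=h'(1)=0$, and the correct pairing of the $\delta_{j+2}$ with the $\epsilon_j$. After that the inequality is immediate. Note that the argument never uses the exact value of $\alpha$, only $\deg(R/J)>(d^2-d)/2$.
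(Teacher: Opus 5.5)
Your proof is correct, but it takes a different route from the paper's. The bookkeeping checks out; for instance, your formula reproduces the values $8-2\delta_4$, $5-\delta_4$ and $9-\delta_4-\delta_5$ used later for cubics.

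The paper does not use the shape \eqref{res-prelim-bis} for this lemma. After arranging that $f_1,f_2$ is a regular sequence, it links $J$ to $C=(f_1,f_2):f_3$ through $0\to(R/C)(-d)\to R/(f_1,f_2)\to R/J\to0$. It then uses $\pd_R(R/J)=3$ to get $\pd_R(R/C)\le 2$, so $R/C$ is Cohen--Macaulay of degree $d-\alpha$. Next it notes that $e=d$ forces $\indeg C\ge d$, since an element of $C$ of degree $q$ yields a syzygy of degree $q$. Finally, an Artinian reduction to $k[u,v]$ gives $\deg(R/C)\ge\binom{d+1}{2}>d$. You instead substitute $\delta_1=\delta_2=\delta_3=d$ and $\sum_j\epsilon_j=d$ into $h''(1)/2$, in the style of Theorem~\ref{low-degree-gap}.

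The two arguments prove the same sharper fact: $e=d$ forces $\deg(R/J)\le\binom{d}{2}=d^2-\binom{d+1}{2}$. This bound is attained, for example by the data $(3,3,3,3,3;1,1,1)$ for cubics and by $J_4$ of Proposition~\ref{prop:Jd-family}, where $\deg(R/J_4)=6$. Your computation also shows that equality requires all $\delta_i=d$ and all $\epsilon_j=1$.

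The paper's proof avoids the structure theorem of \cite{BFRS}; it uses the homological-dimension hypothesis only to make $R/C$ Cohen--Macaulay. Yours is a purely numerical check once that structure theorem is granted. It also yields an explicit formula for $\deg(R/J)$ in terms of the resolution data.
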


\begin{proof}
After a general change of generators, assume that $f_1,f_2$ form a
regular sequence and set $C=(f_1,f_2):f_3$. Since $f_3\notin(f_1,f_2)$,
$C$ is proper. The exact sequence
\begin{equation}\label{seqofthemappingcone}
0\to (R/C)(-d)\xrightarrow{\cdot f_3}R/(f_1,f_2)
\to R/J\to0
\end{equation}
identifies $(R/C)(-d)$ with a nonzero submodule of the
Cohen--Macaulay module $R/(f_1,f_2)$. Thus $C$ has height two, and
additivity of multiplicities gives
\[
\deg(R/C)=d^2-\deg(R/J)=d-\alpha.
\]
Moreover, $\pd_R(R/J)=3$ and $\pd_R(R/(f_1,f_2))=2$, so the
exact sequence gives $\pd_R(R/C)\le2$. Since $C$ has height two,
$R/C$ is Cohen--Macaulay.

Suppose that $e=d$. Every nonzero homogeneous element $h\in C$ of
degree $q$ yields a syzygy of coefficient degree $q$ by writing
$hf_3=af_1+bf_2$. Therefore $\indeg C\ge d$; as
$(f_1,f_2)\subseteq C$, equality holds.

Choose $n-2$ independent linear forms which form an $R/C$-regular
sequence, and let $A$ be the resulting Artinian reduction. Then
$A\simeq k[u,v]/\overline C$, where $\overline C$ has no nonzero
component of degree below $d$. Hence
\[
\deg(R/C)=\length_k A
\ge\sum_{q=0}^{d-1}\dim_k k[u,v]_q
=\sum_{q=0}^{d-1}(q+1)
=\binom{d+1}{2}>d.
\]
This contradicts $\deg(R/C)=d-\alpha\le d$. Since $e\le d$,
it follows that $e\le d-1$.
\end{proof}

\begin{Theorem} \label{structural-gap-degree-general}
Let $R=k[x_1,\ldots,x_n]$, with $n\geq 3$, and let
$J\subseteq R$ be a $d$-equigenerated strict almost complete intersection of height $2$ and homological dimension $2$. Then
\[
\deg(R/J)\leq d^2-d.
\]
In particular,
\[
\deg(R/J)\neq d^2-d+1.
\]
\end{Theorem}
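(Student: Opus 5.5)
Suppose, for contradiction, that $\deg(R/J)=d^2-d+\alpha$ with $1\le\alpha$. The general bound $\deg(R/J)\le d^2$ from Theorem~\ref{BourBoundEquigenerated} (with $e\le d$) applies, so $\alpha\le d$. The plan is to exclude every such value. The case $\alpha=d$ is excluded at the outset: then $\deg(R/C)=0$ in the notation of Lemma~\ref{upperboundinitialdegree}, which forces $C=R$ and hence $f_3\in(f_1,f_2)$, contradicting minimality. So it suffices to treat $1\le\alpha\le d-1$. In this range Lemma~\ref{upperboundinitialdegree} gives $e\le d-1$.

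Next I would compare with the generalized du Plessis--Wall bounds. Theorem~\ref{BourBoundEquigenerated} gives
\[
\deg(R/J)\le d^2-ed+e^2 .
\]
For $1\le e\le d-1$ the right side is maximized at $e=1$ or $e=d-1$, where it equals $d^2-d+1$. Hence $\alpha\le1$, and the only remaining case is $\alpha=1$ with $e\in\{1,d-1\}$. In that case $\Bour(J)=0$ by \eqref{bourequigeneratedformula}, so $J$ is perfect of height two by Proposition~\ref{prop:not-equigen-Bour-consequences}(i). This contradicts the assumption that $J$ has homological dimension two, that is, $\pd R/J=3$. Therefore $\deg(R/J)\le d^2-d$.

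One subtlety needs attention. When $e=d-1$ with $d=2$ we get $e=1$, and this is consistent with the argument above. For $d\ge3$, the inequality $e(d-e)\ge d-1$ becomes strict for $2\le e\le d-2$, which gives $d^2-ed+e^2<d^2-d+1$ there; this is fine. I expect the main obstacle to be only the bookkeeping of the extremal case $\alpha=d$ (and, implicitly, $e=d$), which is handled by Lemma~\ref{upperboundinitialdegree} together with the colon ideal argument. The statement $\deg(R/J)\neq d^2-d+1$ is then immediate.
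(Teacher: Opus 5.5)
Your proof is correct and is essentially the paper's argument. You rule out $\deg(R/J)=d^2$ via the colon ideal $C=(f_1,f_2):f_3$: a nonzero $R/C$ embeds in the Cohen--Macaulay module $R/(f_1,f_2)$ and so has positive multiplicity. You then get $1\le e\le d-1$ from Lemma~\ref{upperboundinitialdegree}. Finally, $\Bour(J)=(e-1)(e+1-d)+1-\alpha\le 1-\alpha$ forces the conclusion, because $\Bour(J)\ge1$ for a non-perfect ideal. Your only deviation is that you split this last step into $\Bour(J)\ge0\Rightarrow\alpha\le1$ and then exclude $\alpha=1$ via $\Bour(J)=0$, which is the same computation.
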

\begin{proof}
If $\deg(R/J)<d^2-d$, there is nothing to prove. Assume therefore
that $\deg(R/J)\ge d^2-d$.
After a general change of generators, take $f_1,f_2$ to be a regular
sequence and set $C=(f_1,f_2):f_3$. Minimality of the generators makes
$R/C$ nonzero. The injection
$(R/C)(-d)\hookrightarrow R/(f_1,f_2)$ shows that $C$ has height
two and positive multiplicity. Consequently,
\[
\deg(R/J)=d^2-\deg(R/C)\le d^2-1.
\]
We may therefore write
\[
\deg(R/J)=d^2-d+\alpha,
\qquad 0\le\alpha\le d-1.
\]
Lemma~\ref{upperboundinitialdegree} gives $1\le e\le d-1$.
Using the Bourbaki formula, we obtain
\[
1\le\Bour(J)
=e^2-ed+d-\alpha
=(e-1)(e+1-d)+1-\alpha
\le1-\alpha.
\]
Here the first inequality holds because $J$ has projective dimension
two and is therefore not perfect of height two. It follows that
$\alpha=0$, and hence $\deg(R/J)=d^2-d$. This proves the bound.
\end{proof}

\begin{Remark}
The value $d^2-d+1$ is not a global degree gap, since it occurs in the perfect case. Indeed, a height-two perfect ideal generated by three forms of degree $d$ with Hilbert--Burch syzygy degrees $1$ and $d-1$ has $\deg(R/J)=d^2-d+1$. Thus $d^2-d+1$ is a structural gap for ideals of homological dimension $2$, not for all strict almost complete intersections.
\end{Remark}

\begin{Theorem}\label{structural-gap-degree-general-desc}
Let $R=k[x_1,\ldots,x_n]$, with $n\ge3$, and let $J\subseteq R$ be a $d$-equigenerated strict almost complete intersection of height $2$ and homological dimension $2$. The following are equivalent:
\begin{enumerate}
\item[{\rm(a)}] $\deg(R/J)=d^2-d$;
\item[{\rm(b)}] $J$ has one of the following minimal graded free resolutions:
\[
0\to R(-2d-1)\to R(-d-1)\oplus R(-2d)^2
\to R(-d)^3\to J\to0,
\]
or $d=3$ and
\[
0\to R(-6)\to R(-5)^3\to R(-3)^3\to J\to0.
\]
\end{enumerate}
If either condition holds, then $\Bour(J)=1$.
\end{Theorem}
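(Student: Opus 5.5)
The plan for (a)$\Rightarrow$(b) is to reuse the numerical argument from the proof of Theorem~\ref{structural-gap-degree-general}, and then to pin down the resolution with the nearly-free structure theorem. For (b)$\Rightarrow$(a) a Hilbert series computation should suffice.

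\emph{(a)$\Rightarrow$(b).} Assume $\deg(R/J)=d^2-d$, so that $\alpha=0$ in the notation of Lemma~\ref{upperboundinitialdegree}. That lemma gives $1\le e\le d-1$. The Bourbaki formula \eqref{bourequigeneratedformula} then reads
\[
\Bour(J)=e^2-ed+d=(e-1)(e+1-d)+1 .
\]
Since $J$ has homological dimension two, it is not perfect of height two, so $\Bour(J)\ge1$. For $1\le e\le d-1$ we have $(e-1)(e+1-d)\le 0$. Hence equality must hold: $\Bour(J)=1$, and either $e=1$ or $e=d-1$. This already gives the final assertion of the theorem under (a).

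Next I would use $\Bour(J)=1$. As in the proof of Proposition~\ref{prop:nearly-free-restriction}, the results of \cite[Theorem~2.4 and Proposition~2.9]{MFA} show that $\syz(J)$ has the nearly-free resolution
\[
0\to R(e-d-2)\to R(e-d-1)^2\oplus R(-e)\to \syz(J)\to 0
\]
in the standard grading. Moreover $e\le d-e+1$, that is, $2e\le d+1$. Shifting by $-d$ to pass to $J\subseteq R(-d)^3$ gives the resolution
\[
0\to R(-2d+e-2)\to R(-d-e)\oplus R(-2d+e-1)^2\to R(-d)^3\to J\to 0 .
\]
We now split into the two cases.
\begin{itemize}
\item If $e=1$, this is exactly the first displayed resolution.
\item If $e=d-1\ge2$, the constraint $2e\le d+1$ forces $d\le 3$, hence $d=3$ and $e=2$. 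The resolution becomes $0\to R(-6)\to R(-5)^3\to R(-3)^3$, which is the second displayed resolution.
\end{itemize}
The case $d=2$, $e=d-1$ coincides with $e=1$, so it is already covered.

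The step I expect to need the most care is invoking the nearly-free structure theorem of \cite{MFA} in arbitrary dimension $n$ without a local freeness hypothesis. If that citation does not apply directly, I would instead argue as follows.
\begin{itemize}
\item $\Bour(J)=1$ means $I_\nu$ is a complete intersection of two linear forms (Proposition~\ref{prop:not-equigen-Bour-consequences}(ii)).
\item Theorem~\ref{Bour3Gens}(c) then gives the resolution of $M_\nu\simeq I_\nu(e-2d)$ as a Koszul complex on two linear forms.
\item Splicing this with $0\to R(-d-e)\to \syz(J)\to M_\nu\to0$, and using that $\nu$ is a minimal generator, produces the nearly-free shape with the shifts above.
\end{itemize}

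\emph{(b)$\Rightarrow$(a).} For each displayed resolution, compute the numerator of the Hilbert series of $R/J$. In the first case it is
\[
1-3t^d+t^{d+1}+2t^{2d}-t^{2d+1},
\]
and half its second derivative at $t=1$ gives $\deg(R/J)=d^2-d$. In the second case it is
\[
1-3t^3+3t^5-t^6,
\]
giving $\deg(R/J)=6=d^2-d$ for $d=3$. So (a) holds in both cases, and the first paragraph then yields $\Bour(J)=1$. This can also be checked directly: $e=1$ gives $1-d+d^2-(d^2-d)=1$, and $d=3$, $e=2$ gives $9-6+4-6=1$.
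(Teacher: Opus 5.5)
Your proposal is correct and follows the paper's proof essentially step by step. You use Lemma~\ref{upperboundinitialdegree} and the factorization $\Bour(J)=(e-1)(e+1-d)+1$ to force $\Bour(J)=1$ and $e\in\{1,d-1\}$, then the nearly-free shape of $\syz(J)$ with the minimality constraint $d-e+1\ge e$, and a Hilbert-series check for the converse; the only cosmetic difference is that the paper derives the nearly-free resolution through your fallback route (Proposition~\ref{prop:not-equigen-Bour-consequences}(ii) together with Theorem~\ref{Bour3Gens}(c)) rather than citing \cite{MFA} directly.
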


\begin{proof}
{\rm(a)} $\Rightarrow$ {\rm(b)}. By Lemma~\ref{upperboundinitialdegree},
$e=\indeg(\syz(J))\le d-1$. The Bourbaki degree formula gives
\[
\Bour(J)=e^2-ed+d=(e-1)(e+1-d)+1.
\]
Since $J$ has homological dimension $2$, it is not perfect, and hence
$\Bour(J)\ge1$. As $e-1\ge0$ and $e+1-d\le0$, the preceding equality
forces
\[
(e-1)(e+1-d)=0.
\]
Thus $e=1$ or $e=d-1$, and in either case $\Bour(J)=1$.

By Proposition~\ref{prop:not-equigen-Bour-consequences}, the Bourbaki
ideal $I_\nu$ associated to a minimal syzygy $\nu$ is generated by two
independent linear forms and has resolution
\[
0\to R(-2)\to R(-1)^2\to I_\nu\to0.
\]
Applying Theorem~\ref{Bour3Gens}{\rm(c)}, we obtain
\[
0\to R(-(d-e+2))
\to R(-e)\oplus R(-(d-e+1))^2
\to\syz(J)\to0,
\]
and hence
\[
0\to R(-(2d-e+2))
\to R(-(d+e))\oplus R(-(2d-e+1))^2
\to R(-d)^3\to J\to0.
\]
Since $e$ is the initial syzygy degree, minimality gives
$d-e+1\ge e$. If $e=d-1$, this implies $d\le3$; the case $d=2$ already
belongs to $e=1$, while $d=3$ gives $e=2$. Substituting $e=1$ or
$(d,e)=(3,2)$ yields the two resolutions in {\rm(b)}.

Conversely, the Hilbert series numerators corresponding to the two
resolutions in {\rm(b)} are, respectively,
\[
1-3t^d+t^{d+1}+2t^{2d}-t^{2d+1}
\quad\text{and}\quad
1-3t^3+3t^5-t^6.
\]
Taking the second derivative at $t=1$ and dividing by $2$ gives
$\deg(R/J)=d^2-d$ in both cases. The first resolution has $e=1$, while
the second has $(d,e)=(3,2)$; therefore the Bourbaki degree formula
gives $\Bour(J)=1$.
\end{proof}

	
\subsection{Ideals with exactly three syzygies}

When $m=3$ the minimal graded free resolution of $J$ from \eqref{res-prelim-bis} has the shape
\begin{equation}\label{resm=3}
0\to R(-\delta_1-\delta_2-\delta_3)\to R(-d-\delta_1)\oplus R(-d-\delta_2)\oplus R(-d-\delta_3) \to R(-d)^3\to J \to 0
\end{equation}
where $\delta_1\leq \delta_2\leq \delta_3\leq d$ with $\delta_i+\delta_j\geq d+1$ for every $1\leq i<j\leq 3$.

 Conversely, for each integer $d\geq 2$ and each triple $\delta_1\leq \delta_2\leq \delta_3\leq d$ with $\delta_i+\delta_j\geq d+1$ for every $1\leq i<j\leq 3,$ there is a codimension two ideal $J$ with minimal graded free resolution \eqref{resm=3}, except for the values $(\delta_1, \delta_2, \delta_3)=(d,d,d)$ resulting in the Koszul resolution for the complete intersection case.   In fact, if we consider the $4\times 3$ matrix
 
 $$\eta=\left[\begin{matrix}
x^{d-\delta_1}&0&0\\
y^{d-\delta_1}&x^{d-\delta_2}&0\\
0&y^{d-\delta_2}&x^{d-\delta_3}\\
x^{\delta_2+\delta_3-d}&z^{\delta_1+\delta_3-d}&y^{\delta_1+\delta_2-d} 
\end{matrix}\right]$$
we have that $\eta$ is a $(d,3,\delta_1,\delta_2,\delta_3,\delta_1+\delta_2-d)$-{\it level matrix} as in \cite[Definition 1.2]{BFRS}. Thus, by \cite[Theorem 1.3]{BFRS} the ideal generated by the maximal minors of $\eta$ fixing the last row is a homogeneous ideal of codimension 2 with minimal graded free resolution \eqref{resm=3}. In this case, we obtain the following formula for the Bourbaki degree. 

\begin{Proposition}\label{Bourm=3} Let $J$ be an ideal of codimension $2$ with minimal graded free resolution \eqref{resm=3}. Then: $${\rm Bour}(J)=(d-\delta_1-\delta_2)(d-\delta_1-\delta_3).$$
\end{Proposition}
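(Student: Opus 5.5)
The formula should follow by reading off $e$ and $\deg(R/J)$ from the resolution \eqref{resm=3} and then applying the equigenerated Bourbaki formula \eqref{bourequigeneratedformula}, $\Bour(J)=d^2-ed+e^2-\deg(R/J)$.

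\emph{Step 1: the initial degree.} The first syzygies of $J$ in \eqref{resm=3} sit in shifted degrees $d+\delta_1\le d+\delta_2\le d+\delta_3$. In the standard grading of $\syz(J)\subseteq R^3$, the initial degree is therefore $e=\delta_1$. Minimality of the resolution guarantees that the smallest generator degree is the initial degree.

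\emph{Step 2: the codimension-two degree.} From \eqref{resm=3}, the Hilbert series numerator of $R/J$ is
\[
h(t)=1-3t^d+t^{d+\delta_1}+t^{d+\delta_2}+t^{d+\delta_3}-t^{\delta_1+\delta_2+\delta_3}.
\]
Since $\hht(J)=2$, we have $h(1)=h'(1)=0$. The first condition is immediate. The second condition is $-3d+3d+\delta_1+\delta_2+\delta_3-(\delta_1+\delta_2+\delta_3)=0$, which also holds. This is consistent with \eqref{sumHS-c1} for $m=3$, which reads $\epsilon_1=\delta_1+\delta_2-d$, so that the last shift is $d+\delta_3+\epsilon_1=\delta_1+\delta_2+\delta_3$.

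The degree is then $h''(1)/2$. Write $S=\delta_1+\delta_2+\delta_3$. Using $\binom{a}{2}$-type terms, one gets
\[
2\deg(R/J)=-3d(d-1)+\sum_i (d+\delta_i)(d+\delta_i-1)-S(S-1).
\]
The linear terms cancel, because $-3d+\sum_i(d+\delta_i)-S=0$. What remains is
\[
2\deg(R/J)=-3d^2+\sum_i(d+\delta_i)^2-S^2=2dS+\sum_i\delta_i^2-S^2.
\]
Hence
\[
\deg(R/J)=dS-(\delta_1\delta_2+\delta_1\delta_3+\delta_2\delta_3).
\]

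\emph{Step 3: assemble.} Substituting $e=\delta_1$ gives
\[
\Bour(J)=d^2-d\delta_1+\delta_1^2-d(\delta_1+\delta_2+\delta_3)+\delta_1\delta_2+\delta_1\delta_3+\delta_2\delta_3.
\]
Next, expand the target:
\[
(d-\delta_1-\delta_2)(d-\delta_1-\delta_3)=d^2-d(2\delta_1+\delta_2+\delta_3)+\delta_1^2+\delta_1\delta_3+\delta_1\delta_2+\delta_2\delta_3.
\]
The two expressions agree term by term.

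The only point needing care is Step 1: the identification $e=\delta_1$ requires that the syzygy generator of degree $d+\delta_1$ is genuinely minimal, which is guaranteed by minimality of \eqref{resm=3}. Beyond that, the sole risk is arithmetic error in the second-derivative computation of Step 2.
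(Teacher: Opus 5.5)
Your proposal is correct and follows the paper's proof: you read off $e=\delta_1$ and the Hilbert numerator from \eqref{resm=3}, compute $\deg(R/J)=d(\delta_1+\delta_2+\delta_3)-(\delta_1\delta_2+\delta_1\delta_3+\delta_2\delta_3)$ as half the second derivative at $t=1$, and substitute into \eqref{bourequigeneratedformula} to factor the result. Your arithmetic checks out, and you make explicit the steps $e=\delta_1$ and $h(1)=h'(1)=0$, which the paper uses only implicitly.
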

 \begin{proof}
 From the  minimal graded free resolution  \eqref{resm=3} we get that the Hilbert series of $R/J$ is
$$H_{R/J}(t)=\frac{ S_{R/J}(t)}{(1-t)^n}$$
where $S_{R/J}(t)= 1-3t^d+t^{d+\delta_1}+ t^{d+\delta_2}+t^{d+\delta_3}-t^{\delta_1+\delta_2+\delta_3}$. So, 
$$\deg(R/J)=\frac{S''_{R/J}(1)}{2}=d(\delta_1+\delta_2+\delta_3)-(\delta_1\delta_2+\delta_1\delta_3+\delta_2\delta_3).$$
Consequently,
\begin{eqnarray}
{\rm Bour}(J)&=&d^2-\delta_1d+\delta_1^2-d(\delta_1+\delta_2+\delta_3)+(\delta_1\delta_2+\delta_1\delta_3+\delta_2\delta_3)\nonumber\\
&=& d^2-2\delta_1d+\delta_1^2-(\delta_2+\delta_3)(d-\delta_1)+\delta_2\delta_3\nonumber\\
&=&(d-\delta_1)^2-(\delta_2+\delta_3)(d-\delta_1)+\delta_2\delta_3\nonumber\\
&=&(d-\delta_1-\delta_2)(d-\delta_1-\delta_3)\nonumber
\end{eqnarray}
\end{proof}

\subsection{Ideals generated by quadrics}\label{subsec:ideals-quadrics} Let $J$ be a $2$-equigenerated ideal of codimension at least 2 and homological dimension 2 with minimal graded free resolution  
\begin{equation}
	0\to \bigoplus_{i=1}^{m-2}R(-2-\delta_{i+2}-\epsilon_i)\to \bigoplus_{i=1}^m R(-2-\delta_i)\to R(-2)^3\to J\to 0
\end{equation} 
By Remark~\ref{numberofsyzygies}, we have $3\leq m\leq 4$. In this section, for each $m$ in this range, we display all possible vectors $\underline{\delta}:=(\delta_1,\ldots,\delta_m)$ and   $\underline{\epsilon}:=(\epsilon_1,\ldots,\epsilon_{m-2})$ and their respective values for $\deg(R/J)$ and ${\rm Bour}(J)$.

\begin{Proposition}[$m=3$]
	Let $J$ be a $2$-equigenerated ideal of codimension at least two with minimal graded free resolution
	\begin{equation}
		0\to R(-\delta_1-\delta_2-\delta_3)\to \bigoplus_{i=1}^3 R(-2-\delta_i)\to R(-2)^3\to J\to 0
	\end{equation}
	Then, the list of all possible vectors $\underline{\delta}$ and $\underline{\epsilon}$ and their respective values $\deg(R/J)$ and ${\rm Bour}(J)$ is:
	\begin{table}[H]
		\begin{center}
			\begin{tabular}{|c|c|c|c|}
				\hline
				$\underline{\delta}$&$\underline{\epsilon}$&$\deg(R/J)$&{\rm Bour}(J)\\
				\hline
				$(1,2,2)$&$(1)$&$2$&$1$\\
				$(2,2,2)$&$(2)$&$0$&$4$\\
				\hline
					\end{tabular}
			\end{center}
		\end{table}
				\end{Proposition}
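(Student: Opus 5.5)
The proof is a finite enumeration of the numerical constraints from \cite{BFRS} recorded in \eqref{Dnondecresing}--\eqref{c3}, specialized to $d=2$ and $m=3$, followed by a direct computation of $\deg(R/J)$ and $\Bour(J)$ for each surviving vector.

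\textbf{Enumerating the vectors.} With $m=3$ there is a single $\epsilon_1$, and \eqref{sumHS-c1} reads $\epsilon_1=\delta_1+\delta_2-2$; since $\epsilon_1\ge1$, this gives $\delta_1+\delta_2\ge3$. Condition \eqref{c3} gives $\delta_3\le 2$, and the ordering forces $1\le\delta_1\le\delta_2\le\delta_3\le2$. Condition \eqref{c2} asks $\delta_i+\delta_j\ge3$ for all $i<j$, which excludes $\delta_1=\delta_2=1$. The only remaining possibilities are $\underline\delta=(1,2,2)$ with $\epsilon_1=1$, and $\underline\delta=(2,2,2)$ with $\epsilon_1=2$. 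Condition \eqref{Dnondecresing} is vacuous here. The last shift matches the displayed resolution, because $2+\delta_3+\epsilon_1=\delta_1+\delta_2+\delta_3$ by \eqref{sumHS-c1}.

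\textbf{Computing the invariants.} For each vector, I will use the formula from the proof of Proposition~\ref{Bourm=3},
\[
\deg(R/J)=d(\delta_1+\delta_2+\delta_3)-(\delta_1\delta_2+\delta_1\delta_3+\delta_2\delta_3).
\]
For $(1,2,2)$ this gives $2\cdot5-8=2$, and for $(2,2,2)$ it gives $12-12=0$. For the Bourbaki degree I will use Proposition~\ref{Bourm=3}:
\[
\Bour(J)=(2-\delta_1-\delta_2)(2-\delta_1-\delta_3).
\]
This gives $(-1)(-1)=1$ for $(1,2,2)$ and $(-2)(-2)=4$ for $(2,2,2)$.

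\textbf{Consistency checks.} These agree with \eqref{bourequigeneratedformula} using $e=\delta_1$. In the first case $4-2+1-2=1$. In the second case $4-4+4-0=4$. The second case is the Koszul complex of a complete intersection, which is why the statement says ``codimension at least two''; this matches Proposition~\ref{prop:not-equigen-Bour-consequences}(iv).

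\textbf{Where care is needed.} There is no real obstacle. The only point to watch is that \eqref{c2} and \eqref{sumHS-c1} must be applied together to rule out $\delta_1=\delta_2=1$. Realizability of each vector need not be addressed, since the statement lists the possible vectors. Existence of $(1,2,2)$ nonetheless follows from the level-matrix construction preceding Proposition~\ref{Bourm=3}, and $(2,2,2)$ is realized by any complete intersection of quadrics.
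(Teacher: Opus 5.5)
Your proposal is correct and follows the paper's proof. Like the paper, you enumerate the vectors allowed by \eqref{Dnondecresing}--\eqref{c3} with $d=2$, $m=3$ and read off $\Bour(J)$ from Proposition~\ref{Bourm=3}; the only cosmetic difference is that you compute $\deg(R/J)$ directly from the Hilbert-series numerator and use the Bourbaki formula as a check, whereas the paper obtains $\deg(R/J)$ from $\Bour(J)$ via $\deg(R/J)=d^2-\delta_1 d+\delta_1^2-\Bour(J)$.
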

			\begin{proof} For $d=2$ and $m=3$ we have that the vectors $\underline{\delta}$ and  $\underline{\epsilon}$ that  satisfy the conditions \eqref{Dnondecresing}, \eqref{sumHS-c1}, \eqref{c2} and \eqref{c3} are exactly those listed in the table above. The respective ${\rm Bour}(J)$ can be calculated using Proposition~\ref{Bourm=3} and $\deg(R/J)$ can be calculated by the formula $\deg(R/J)=d^2-\delta_1d+\delta_1^2-{\rm Bour}(J).$
					\end{proof}

                    \begin{Proposition}[$m=4$]
				Let $J$ be a $2$-equigenerated ideal of codimension $2$ with minimal graded free resolution
			\begin{equation}\label{res-d2-m=4}
				0\to \bigoplus_{i=1}^2R(-2-\delta_{i+2}-\epsilon_i)\to \bigoplus_{i=1}^4 R(-2-\delta_i)\to R(-2)^3\to J\to 0
			\end{equation}
		Then the list of all possible vectors $\underline{\delta}$ and $\underline{\epsilon}$ and their respective values $\deg(R/J)$ and ${\rm Bour}(J)$ is:

			\begin{center}
				\begin{tabular}{|c|c|c|c|}
					\hline
			$\underline{\delta}$&$\underline{\epsilon}$&$\deg(R/J)$&{\rm Bour}(J)\\
					\hline
					$(2,2,2,2)$&$(1,1)$&$1$&$3$\\
					\hline
				\end{tabular}
			\end{center}
		
		\end{Proposition}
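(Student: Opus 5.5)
The plan is to enumerate the data $(\underline\delta,\underline\epsilon)$ allowed by the conditions \eqref{Dnondecresing}--\eqref{c3} for $d=2$, $m=4$, as in the case $m=3$. Then compute $\deg(R/J)$ from the Hilbert series numerator, and finally obtain $\Bour(J)$ from formula \eqref{bourequigeneratedformula} with $e=\delta_1$.

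\emph{The vectors.} By Remark~\ref{numberofsyzygies}, $m-2=2\le\epsilon_1+\epsilon_2=\delta_1+\delta_2-2\le 2$, and the $\epsilon_i$ are positive. Hence $\epsilon_1=\epsilon_2=1$ and $\delta_1+\delta_2=4$. Since $\delta_1\le\delta_2\le\delta_3\le 2$ by \eqref{c3}, this forces $\delta_1=\delta_2=2$, and then $\delta_3=2$. For $\delta_4$, condition \eqref{Dnondecresing} only gives $\delta_4+\epsilon_2\ge\delta_3+\epsilon_1$, that is, $\delta_4\ge 2$, so at this point $\delta_4$ is not yet pinned down.

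\emph{Computing the degree.} The resolution gives the Hilbert numerator
\[
S(t)=1-3t^2+3t^4+t^{2+\delta_4}-t^5-t^{3+\delta_4}.
\]
We have $S(1)=0$. The first derivative is $S'(1)=-6+12+(2+\delta_4)-5-(3+\delta_4)=0$, which is consistent with codimension two. For the second derivative, write $a=2+\delta_4$:
\[
S''(1)=-6+36+a(a-1)-20-(a+1)a=10-2a=6-2\delta_4.
\]
Hence $\deg(R/J)=3-\delta_4$, which matches the computation in the proof of Theorem~\ref{low-degree-gap}(a).

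\emph{Pinning down $\delta_4$.} This is the step I expect to be the main obstacle. The ideal has codimension $2$, so $\deg(R/J)\ge 1$, which gives $\delta_4\le 2$. Together with $\delta_4\ge\delta_3=2$, we get $\delta_4=2$ and $\deg(R/J)=1$. Then $e=\delta_1=2$, and formula \eqref{bourequigeneratedformula} gives
\[
\Bour(J)=4-4+4-1=3.
\]
The only subtle point is justifying $\deg(R/J)>0$ in the codimension-two case. By the associativity formula, $\deg(R/J)$ sums over the minimal primes of height $2$, and each contributes a positive term, so the bound holds.
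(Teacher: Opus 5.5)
Your proposal is correct and follows the paper's argument. The constraints force $\underline{\epsilon}=(1,1)$ and $\underline{\delta}=(2,2,2,\delta_4)$ with $\delta_4\ge 2$. The Hilbert numerator then gives $\deg(R/J)=3-\delta_4$, positivity of the codimension-two degree pins down $\delta_4=2$, and hence $\Bour(J)=3$. Your explicit derivative computations and your justification of $\deg(R/J)>0$ via the associativity formula supply details the paper leaves implicit.
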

	\begin{proof}   For $d=2$ and $m=4$ we have that the vectors $\underline{\delta}$ and  $\underline{\epsilon}$ that  satisfy the conditions \eqref{Dnondecresing}, \eqref{sumHS-c1}, \eqref{c2} and \eqref{c3} are:

			\begin{center}
				\begin{tabular}{|c|c|}
					\hline
					$\underline{\delta}$&$\underline{\epsilon}$\\
					\hline
					$(2,2,2,\delta_4),\,2\leq \delta_4$&$(1,1)$\\
					\hline
				\end{tabular}
			\end{center}
		So, from  the minimal graded free resolution \eqref{res-d2-m=4} we have that the respective degrees  of $R/J$ are:
			\[\deg(R/J)=-\delta_4+3\]
			Since $\deg(R/J)>0$ we have $\delta_4=2.$
\end{proof}

We note in Table~\ref{table:gradient-triples-cubics} that all possible resolutions described above occur when $n=3$ for gradient ideals $J = J_F$ of cubics $\deg(F) = 3$, as described in \cite[Section 5]{Futata2023}:

\begin{table}[H]
\caption{Gradient ideals of plane projective cubics and resolutions}
\begin{tabular}{|l|l|l|l|l|l|l|}
\hline
Cubic                & Tjurina & Bourbaki & indeg & m & $\underline{\delta}$ & $\underline{\epsilon}$ \\ \hline
non-singular         & 0       & 4               & 2              & 3 & (2,2,2)                                            & (2)                                                  \\ \hline
nodal                & 1       & 3               & 2              & 4 & (2,2,2,2)                                          & (1,1)                                                \\ \hline
cuspidal             & 2       & 1               & 1              & 3 & (1,2,2)                                            & (1)                                                  \\ \hline
smooth conic + transverse line         & 2       & 1               & 1              & 3 & (1,2,2)                                            & (1)                                                  \\ \hline
smooth conic + tangent line & 3       & 0               & 1              & 2 & $(1,1)$                                               & free                                                 \\ \hline
3 generic lines      & 3       & 0               & 1              & 2 & $(1,1)$                                               & free    \\ \hline
\end{tabular}
\label{table:gradient-triples-cubics}
\end{table}
Note also that $\deg(R/J) = 3$ implies that $\syz(J)$ is free, from Theorem~\ref{structural-gap-degree-general}.

\subsection{Ideals generated by cubics}\label{subsec:ideals-cubics}
 
Let $J$ be a $3$-equigenerated ideal of codimension at least two with minimal graded free resolution  	\begin{equation}
	0\to \bigoplus_{i=1}^{m-2}R(-3-\delta_{i+2}-\epsilon_i)\to \bigoplus_{i=1}^m R(-3-\delta_i)\to R(-3)^3\to J\to 0
\end{equation} 
By the Remark~\ref{numberofsyzygies}, we have in this case  $3\leq m\leq 5.$
For each $m$ in this range we  will display all possible vectors $\underline{\delta}:=(\delta_1,\ldots,\delta_m)$ and   $\underline{\epsilon}:=(\epsilon_1,\ldots,\epsilon_{m-2})$ and their respective values $\deg(R/J)$ and ${\rm Bour}(J).$

\begin{Proposition}[$m=3$]
	Let $J$ be a $3$-equigenerated ideal of codimension at least two with minimal graded free resolution
	\begin{equation}
		0\to R(-\delta_1-\delta_2-\delta_3)\to \bigoplus_{i=1}^3 R(-3-\delta_i)\to R(-3)^3\to J\to 0
	\end{equation}
	Then, the list of all possible vectors $\underline{\delta}$ and $\underline{\epsilon}$ and their respective values $\deg(R/J)$ and ${\rm Bour}(J)$ is:
	\begin{table}[H]
		\begin{center}
			\begin{tabular}{|c|c|c|c|}
				\hline
				$\underline{\delta}$&$\underline{\epsilon}$&$\deg(R/J)$&{\rm Bour}(J)\\
				\hline
				$(1,3,3)$&$(1)$&$6$&$1$\\
				$(2,2,2)$&$(1)$&$6$&$1$\\
				$(2,2,3)$&$(1)$&$5$&$2$\\
				$(2,3,3)$&$(2)$&$3$&$4$\\
				$(3,3,3)$&$(3)$&$0$&$9$\\
				\hline
					\end{tabular}
			\end{center}
		\end{table}
				\end{Proposition}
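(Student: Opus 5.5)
The plan is the same as for the quadric case: enumerate all integer data allowed by the constraints of \cite{BFRS} with $d=3$ and $m=3$, then compute the invariants from Proposition~\ref{Bourm=3}.

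For $m=3$, condition \eqref{sumHS-c1} reads $\delta_1+\delta_2=3+\epsilon_1$ with $\epsilon_1\ge1$. Hence $\delta_1+\delta_2\ge4$, which also implies \eqref{c2} for the pair $(1,2)$. Condition \eqref{c3} gives $\delta_1\le\delta_2\le\delta_3\le3$, and \eqref{c2} requires $\delta_i+\delta_j\ge4$ for every pair. Condition \eqref{Dnondecresing} is vacuous since there is a single $\epsilon$.

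We now enumerate by $\delta_1$:
\begin{itemize}
\item If $\delta_1=1$, then $\delta_2,\delta_3\ge3$, so the only triple is $(1,3,3)$, with $\epsilon_1=1$.
\item If $\delta_1=2$, then $\delta_2,\delta_3\in\{2,3\}$, giving $(2,2,2)$, $(2,2,3)$ and $(2,3,3)$, with $\epsilon_1=1,1,2$ respectively.
\item If $\delta_1=3$, the only triple is $(3,3,3)$, with $\epsilon_1=3$.
\end{itemize}
This gives exactly the five rows of the table.

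For each row we take $e=\delta_1$ (the initial syzygy degree in the standard grading). Proposition~\ref{Bourm=3} gives
\[
\Bour(J)=(3-\delta_1-\delta_2)(3-\delta_1-\delta_3),
\]
which yields $1,1,2,4,9$ for the five rows in order. The degree then follows from the Bourbaki formula \eqref{bourequigeneratedformula},
\[
\deg(R/J)=9-3\delta_1+\delta_1^2-\Bour(J).
\]
Alternatively, one can use the expression $\deg(R/J)=3\sum\delta_i-\sum_{i<j}\delta_i\delta_j$ from the proof of Proposition~\ref{Bourm=3}. Either way we get $6,6,5,3,0$.

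The only point needing care is the row $(3,3,3)$. Here $\deg(R/J)=0$, so this is the complete-intersection (Koszul) case, which is why the hypothesis says codimension at least two. For that row one should read $\deg$ with the paper's codimension-two convention, under which the formula is still consistent. Nothing else is an obstacle: the verification is routine arithmetic. Realizability of each row is not part of the claim, although the level matrices $\eta$ constructed before Proposition~\ref{Bourm=3} provide examples for the non-Koszul rows.
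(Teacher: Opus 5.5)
Your proposal is correct and follows essentially the same route as the paper. You enumerate the triples allowed by \eqref{Dnondecresing}--\eqref{c3} for $d=3$, $m=3$, then read off $\Bour(J)$ from Proposition~\ref{Bourm=3} and $\deg(R/J)$ from the Bourbaki formula \eqref{bourequigeneratedformula}; your enumeration and arithmetic check out, and your treatment of the row $(3,3,3)$ as the complete-intersection case with $\deg(R/J)=0$ under the codimension-two convention is consistent with the paper.
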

			\begin{proof} For $d=3$ and $m=3$ we have that the vectors $\underline{\delta}$ and  $\underline{\epsilon}$ that  satisfy the conditions \eqref{Dnondecresing}, \eqref{sumHS-c1}, \eqref{c2} and \eqref{c3} are exactly those listed in the table above. The respective ${\rm Bour}(J)$ can be calculated using Proposition~\ref{Bourm=3} and $\deg(R/J)$ can be calculated by the formula $\deg(R/J)=d^2-\delta_1d+\delta_1^2-{\rm Bour}(J).$
					\end{proof}

		\begin{Proposition}[$m=4$]
				Let $J$ be a $3$-equigenerated ideal of codimension $2$ with minimal graded free resolution
			\begin{equation}\label{res-d3-m=4}
				0\to \bigoplus_{i=1}^2R(-3-\delta_{i+2}-\epsilon_i)\to \bigoplus_{i=1}^4 R(-3-\delta_i)\to R(-3)^3\to J\to 0
			\end{equation}
		Then the list of all possible vectors $\underline{\delta}$ and $\underline{\epsilon}$ and their respective values $\deg(R/J)$ and ${\rm Bour}(J)$ is:

			\begin{center}
				\begin{tabular}{|c|c|c|c|}
					\hline
			$\underline{\delta}$&$\underline{\epsilon}$&$\deg(R/J)$&{\rm Bour}(J)\\
					\hline
					$(2,3,3,3)$&$(1,1)$&$4$&$3$\\
					$(3,3,3,3)$&$(1,2)$&$2$&$7$\\
					$(3,3,3,4)$&$(2,1)$&$1$&$8$\\
					\hline
				\end{tabular}
			\end{center}
		
		\end{Proposition}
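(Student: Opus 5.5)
The plan is to follow exactly the enumerative scheme used for the quadric case with $m=4$ and for the cubic case with $m=3$. Specialise the constraints \eqref{Dnondecresing}, \eqref{sumHS-c1}, \eqref{c2}, \eqref{c3} to $d=3$ and $m=4$.

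First, \eqref{sumHS-c1} gives $\delta_1+\delta_2=3+\epsilon_1+\epsilon_2\ge 5$. Combined with $\delta_1\le\delta_2\le\delta_3\le 3$, this leaves two options:
\begin{itemize}
\item $(\delta_1,\delta_2)=(2,3)$, which forces $\epsilon_1+\epsilon_2=2$, so $\underline{\epsilon}=(1,1)$;
\item $(\delta_1,\delta_2)=(3,3)$, which forces $\epsilon_1+\epsilon_2=3$, so $\underline{\epsilon}\in\{(1,2),(2,1)\}$.
\end{itemize}
In every case $\delta_3=3$, since $\delta_2\le\delta_3\le3$. Condition \eqref{c2} then holds automatically, and $\delta_4\ge 3$ is the only free parameter.

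Condition \eqref{Dnondecresing} reads $3+\epsilon_1\le\delta_4+\epsilon_2$. For $\underline{\epsilon}=(2,1)$ it gives $\delta_4\ge 4$. For $(1,1)$ and $(1,2)$ it imposes nothing beyond $\delta_4\ge3$.

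Next, read the Hilbert numerator off \eqref{res-d3-m=4}:
\[
S(t)=1-3t^3+\sum_{i=1}^4 t^{3+\delta_i}-t^{3+\delta_3+\epsilon_1}-t^{3+\delta_4+\epsilon_2}.
\]
Compute $\deg(R/J)=S''(1)/2$ as a linear function of $\delta_4$, as in the proof of Theorem~\ref{low-degree-gap}(b). I expect this to give:
\begin{itemize}
\item $\deg(R/J)=7-\delta_4$ for $(2,3,3,\delta_4;1,1)$;
\item $\deg(R/J)=8-2\delta_4$ for $(3,3,3,\delta_4;1,2)$;
\item $\deg(R/J)=5-\delta_4$ for $(3,3,3,\delta_4;2,1)$.
\end{itemize}
The last two agree with the formulas in the proof of Theorem~\ref{low-degree-gap}(b).

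Codimension two means $\deg(R/J)>0$. Imposing this alongside the lower bounds on $\delta_4$ pins down $\delta_4$ uniquely:
\begin{itemize}
\item $3\le\delta_4<7$ still allows several values in the first family, so more work is needed there;
\item $3\le\delta_4<4$ gives $\delta_4=3$ in the second family;
\item $4\le\delta_4<5$ gives $\delta_4=4$ in the third family.
\end{itemize}

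The main obstacle is the first family, where positivity alone allows $\delta_4\in\{3,4,5,6\}$. I would close this using the Bourbaki bound. Here $e=\delta_1=2$, so Theorem~\ref{BourBoundEquigenerated} gives $\deg(R/J)\ge d(d-e)=3$, hence $7-\delta_4\ge 3$ and $\delta_4\le 4$.

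To exclude $\delta_4=4$, which would give $\deg(R/J)=3$ and $\Bour(J)=4=e^2$, I would invoke Proposition~\ref{prop:BourJ-max-res}. It says $\Bour(J)=e^2$ forces the resolution with exactly three syzygies, namely $m=3$, contradicting $m=4$. This leaves $\delta_4=3$ and $\deg(R/J)=4$.

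Finally, compute the Bourbaki degrees from \eqref{bourequigeneratedformula}, $\Bour(J)=9-3e+e^2-\deg(R/J)$, with $e=\delta_1$:
\begin{itemize}
\item $e=2$, $\deg(R/J)=4$ gives $\Bour(J)=3$;
\item $e=3$, $\deg(R/J)=2$ gives $\Bour(J)=7$;
\item $e=3$, $\deg(R/J)=1$ gives $\Bour(J)=8$.
\end{itemize}
These match the table.
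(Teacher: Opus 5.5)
Your proposal is correct. It matches the paper's argument (enumeration, the three degree formulas, and positivity for the second and third families) except at one point: how the first family $(2,3,3,\delta_4;1,1)$ is cut down to $\delta_4=3$.

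The paper settles this structurally. For $\delta_4\ge 4$, the degree constraints force the fourth column of any candidate $(3,4,\underline{\delta},\underline{\epsilon})$-level matrix to vanish except in its last entry. Then $I_4(\eta)\subseteq(b_{2,4})$, and \cite[Theorem~1.3]{BFRS} rules out all such resolutions at once.

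You instead stay within the paper's own numerics. Since $e=\delta_1=2<d$, Theorem~\ref{BourBoundEquigenerated} gives $7-\delta_4=\deg(R/J)\ge 3$, so $\delta_4\le 4$. The surviving value $\delta_4=4$ is exactly the equality case $\Bour(J)=\delta_4=e^2$. Proposition~\ref{prop:BourJ-max-res} shows this case forces the $m=3$ resolution with $\underline{\delta}=(2,3,3)$, which contradicts $m=4$ by uniqueness of graded Betti numbers.

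Your route avoids the level-matrix machinery entirely. It also shows that this particular exclusion is a du Plessis--Wall equality phenomenon rather than a finer determinantal obstruction. The paper's route is more uniform: the same determinantal trick handles $m=5$. There $e=d=3$, so the bound $\Bour(J)\le e^2$ reduces to $\deg(R/J)\ge 0$, and your argument would give nothing.
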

	\begin{proof}   For $d=3$ and $m=4$ we have that the vectors $\underline{\delta}$ and  $\underline{\epsilon}$ that  satisfy the conditions \eqref{Dnondecresing}, \eqref{sumHS-c1}, \eqref{c2} and \eqref{c3} are:

			\begin{center}
				\begin{tabular}{|c|c|}
					\hline
					$\underline{\delta}$&$\underline{\epsilon}$\\
					\hline
					$(2,3,3,\delta_4),\,3\leq \delta_4$&$(1,1)$\\
					$(3,3,3,\delta_4),\,3\leq \delta_4$&(1,2)\\
					$(3,3,3,\delta_4),\, 4\leq \delta_4$&(2,1)\\
					\hline
				\end{tabular}
			\end{center}
		So, from  the minimal graded free resolution \eqref{res-d3-m=4} we have that the respective degrees  of $R/J$ are:

		\begin{center}
			\begin{tabular}{|c|c|c|}
				\hline
				$\underline{\delta}$&$\underline{\epsilon}$&$\deg(R/J)$\\
				\hline
				$(2,3,3,\delta_4),\,3\leq \delta_4$&$(1,1)$&$-\delta_4+7$\\
				$(3,3,3,\delta_4),\,3\leq \delta_4$&(1,2)&$-2\delta_4+8$\\
				$(3,3,3,\delta_4),\, 4\leq \delta_4$&(2,1)&$-\delta_4+5$\\
				\hline
			\end{tabular}
		\end{center}
	
	Now we analyze the possibilities for $\delta_4$ in each case above.
	
	First, we consider the case $\underline{\delta}=(2,3,3,\delta_4),$ $3\leq \delta_4,$ and $\underline{\epsilon}=(1,1)$. If $4\leq \delta_4$  then a $5\times 4$ matrix with entries in $R$ satisfying the first two conditions in the definition
	of a $(3,4,\underline{\delta},\underline{\epsilon})$-level matrix
	must have the following format:
	$$\left[\begin{matrix}
	a_{1,1}&a_{1,2}&a_{1,3}&0\\	
	a_{2,1}&a_{2,2}&a_{2,3}&0\\
	a_{3,1}&a_{3,2}&a_{3,3}&0\\
	b_{1,1}&b_{1,2}&b_{1,3}&0\\
	b_{2,1}&b_{2,2}&b_{2,3}&b_{2,4}\\
	\end{matrix}\right]$$
But, for such a matrix $I_4(\eta)\subseteq(b_{2,4})$. Thus, there does not exist  $(3,4,\underline{\delta},\underline{\epsilon})$-level matrix with $\delta_4\geq 4$.  Thus, by \cite[Theorem 1.3]{BFRS}, there does not exist $3$-equigenerated ideal of codimension 2  with minimal graded free resolution \eqref{res-d3-m=4}  for which the shifts  afford the
vectors $(3,4,\underline{\delta},\underline{\epsilon})$ with  $\delta_4\geq 4$. Thus, in this case we have necessarily $\delta_4=3.$

Finally, for the other cases,  the fact that $\deg(R/J)\geq 1$ implies that $\delta_4=3$ in the second row of the table and $\delta_4=4$ in the third row of the table.

		\end{proof}
	
	\begin{Proposition}[$m=5$]
		Let $J$ be a $3$-equigenerated ideal of codimension $2$ with minimal graded free resolution
		\begin{equation}\label{resm=5}
			0\to \bigoplus_{i=1}^3R(-3-\delta_{i+2}-\epsilon_i)\to \bigoplus_{i=1}^5 R(-3-\delta_i)\to R(-3)^3\to J\to 0
		\end{equation}
	Then the list of all possible vectors $\underline{\delta}$ and $\underline{\epsilon}$ and their respective values $\deg(R/J)$ and ${\rm Bour}(J)$ is:
	\begin{table}[H]
		\renewcommand{\arraystretch}{1.15}
		\begin{center}
			\begin{tabular}{|c|c|c|c|}
				\hline
				$\underline{\delta}$&$\underline{\epsilon}$&$\deg(R/J)$&{\rm Bour}(J)\\
				\hline
			 $(3,3,3,3,3)$&$(1,1,1)$&$3$&$6$\\
			 \hline
			\end{tabular}
		\end{center}
	\end{table}
	
	\end{Proposition}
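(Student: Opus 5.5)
The proof follows the pattern of the $m=3$ and $m=4$ cases. We first list all vectors $\underline{\delta}=(\delta_1,\ldots,\delta_5)$ and $\underline{\epsilon}=(\epsilon_1,\epsilon_2,\epsilon_3)$ compatible with \eqref{Dnondecresing}--\eqref{c3} for $d=3$. Then we compute $\deg(R/J)$ from the Hilbert series numerator of \eqref{resm=5}, and finally use positivity of $\deg(R/J)$ (codimension two) to pin down the remaining free parameters.

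Concretely, by Remark~\ref{numberofsyzygies} we have $m-2=3\le\sum\epsilon_i=\delta_1+\delta_2-3\le 3$. This forces $\epsilon=(1,1,1)$ and $\delta_1+\delta_2=6$, hence $\delta_1=\delta_2=3$ because $\delta_2\le\delta_3\le 3$. Then \eqref{c3} together with the ordering gives $\delta_3=3$, while $\delta_4,\delta_5\ge 3$ are free so far; condition \eqref{Dnondecresing} is automatic. The Hilbert numerator of $R/J$ is
\[
S(t)=1-3t^3+3t^6+t^{3+\delta_4}+t^{3+\delta_5}-t^7-t^{4+\delta_4}-t^{4+\delta_5}.
\]
We compute $S''(1)/2$ term by term, using $\binom{a}{2}-\binom{a+1}{2}=-a$. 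This gives
\[
\deg(R/J)=-27+3\cdot 15-21-(3+\delta_4)-(3+\delta_5)=9-\delta_4-\delta_5.
\]
This agrees with the formula recorded in the proof of Theorem~\ref{low-degree-gap}(b). As a sanity check, $S(1)=0$ and $S'(1)=0$ should hold automatically, since the ideal has height two.

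Since $\hht(J)=2$, we have $\deg(R/J)\ge 1$, so $\delta_4+\delta_5\le 8$. Combined with $3\le\delta_4\le\delta_5$, this leaves the possibilities $(\delta_4,\delta_5)=(3,3),(3,4),(3,5),(4,4)$, with degrees $3,2,1,1$ respectively. The main obstacle is to exclude the last three. I would use the level-matrix criterion \cite[Theorem 1.3]{BFRS}, as in the $m=4$ case. When $\delta_5\ge 4$, the degree constraints on a $(3,5,\underline{\delta},\underline{\epsilon})$-level matrix should force zeros: the entries in the column corresponding to the last second syzygy that pair against rows of lower degree would have nonpositive degree and must vanish. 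This would leave a column with a single nonzero entry, so that all maximal minors lie in a principal ideal, contradicting codimension two. If this zero-pattern argument does not work directly, a fallback is to invoke Theorem~\ref{low-degree-gap}(b), which forbids $\Bour(J)=5$. Indeed, with $e=\delta_1=3$ the Bourbaki formula \eqref{bourequigeneratedformula} gives $\Bour(J)=9-\deg(R/J)=\delta_4+\delta_5$. Thus $(3,4)$ would give $\Bour(J)=7$, and $(3,5)$ and $(4,4)$ would give $\Bour(J)=8$; none of these is excluded by that theorem, so the matrix argument is genuinely needed, and I would carry it out case by case.

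Once $\delta_4=\delta_5=3$ is established, we get $\deg(R/J)=3$. Since $e=3$, formula \eqref{bourequigeneratedformula} then yields $\Bour(J)=9-9+9-3=6$, matching the table.
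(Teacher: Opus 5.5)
Your overall route is the paper's: pin down $\underline{\epsilon}=(1,1,1)$ and $\delta_1=\delta_2=\delta_3=3$, get $\deg(R/J)=9-\delta_4-\delta_5$, use $\deg(R/J)\ge 1$ to reduce to $(\delta_4,\delta_5)\in\{(3,3),(3,4),(3,5),(4,4)\}$, and exclude the last three with the level-matrix criterion of \cite[Theorem 1.3]{BFRS}. The gap is in the single mechanism you propose for that last step.

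For $(3,4)$ and $(3,5)$ your mechanism works. In the fifth column the $a$-entries have degree $3-\delta_5<0$. The $b_1$- and $b_2$-entries have degrees $\delta_3+\epsilon_1-\delta_5\le 0$ and $\delta_4+\epsilon_2-\delta_5\le 0$. So they all vanish, the only possibly nonzero entry of that column is $b_{3,5}$, and $I_5(\eta)\subseteq(b_{3,5})$. For $(\delta_4,\delta_5)=(4,4)$ this fails. The $b_2$-entry of column $5$ has degree $\delta_4+\epsilon_2-\delta_5=1$, and the $b_3$-entry of column $4$ has degree $\delta_5+\epsilon_3-\delta_4=1$. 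Neither is forced to vanish, so no column of the level matrix has a single nonzero entry. Your claim that ``this would leave a column with a single nonzero entry'' is therefore false in this case, and as written your argument does not exclude $(4,4)$. As you yourself note, Theorem~\ref{low-degree-gap}(b) cannot serve as a fallback here.

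The missing idea for $(4,4)$ is a two-column version of your argument, which is what the paper does. Columns $4$ and $5$ both vanish in the first four rows: the $a$-entries have degree $3-4<0$, and the $b_1$-entries have degree $\delta_3+\epsilon_1-4=0$. So both columns are supported only on the rows $b_2,b_3$. Laplace expansion along these two columns shows that every $5\times 5$ minor is a multiple of $b_{2,4}b_{3,5}-b_{2,5}b_{3,4}$. Hence $I_5(\eta)$ again lies in a principal ideal, contradicting codimension two.

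There is also a minor slip in your computation of $S''(1)/2$. The first term should be $-3\binom{3}{2}=-9$, not $-27$; as written your displayed sum equals $-9-\delta_4-\delta_5$. Your final formula $9-\delta_4-\delta_5$ is nonetheless correct.
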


\begin{proof} For $d=3$ and $m=5$ we have that the vectors $\underline{\delta}$ and  $\underline{\epsilon}$ that  satisfy the conditions \eqref{Dnondecresing}, \eqref{sumHS-c1}, \eqref{c2} and \eqref{c3} are:
	$$\underline{\delta}=(3,3,3,\delta_4,\delta_5),\,\, 3\leq \delta_4\leq\delta_5\quad\mbox{and}\quad \underline{\epsilon}=(1,1,1)$$
	From the minimal graded free resolution \eqref{resm=5} we have that $\deg(R/J)=-\delta_4-\delta_5+9$. Since $\deg(R/J)\geq 1$ we conclude that $\delta_4+\delta_5\leq 8$.  Thus, $(\delta_4,\delta_5)$ is one of the following pairs: $(3,3),$ $(3,4),$ $(3,5),$ or $(4,4)$  
	
	For $(\delta_4,\delta_5)=(3,4)$ or $(3,5)$ a $6\times 5$ matrix satisfying the first two conditions in the definition
	of a $(3,5,\underline{\delta},\underline{\epsilon})$-level matrix
	must have the following format:
	$$\eta=\left[\begin{matrix}
		a_{1,1}&a_{1,2}&a_{1,3}&a_{1,4}&0\\
		a_{2,1}&a_{2,2}&a_{2,3}&a_{2,4}&0\\
		a_{3,1}&a_{3,2}&a_{3,3}&a_{3,4}&0\\
		b_{1,1}&b_{1,2}&b_{1,3}&b_{1,4}&0\\
		b_{2,1}&b_{2,2}&b_{2,3}&b_{2,4}&0\\
		b_{3,1}&b_{3,2}&b_{3,3}&b_{3,4}&b_{3,5}\\
	\end{matrix}\right]$$ 
But, for such a matrix $I_5(\eta)\subseteq(b_{3,5})$. Thus, there does not exist  $(3,m,\underline{\delta},\underline{\epsilon})$-level matrix with $(\delta_4,\delta_5)=(3,4)$ or $(3,5)$.  In particular, by \cite[Theorem 1.3]{BFRS}, there does not exist $3$-equigenerated ideal of codimension 2  with minimal graded free resolution \eqref{resm=5}  for which the shifts  afford the
vectors $(3,5,\underline{\delta},\underline{\epsilon})$ with  $(\delta_4,\delta_5)=(3,4)$ or $(3,5)$. 

On the other hand,	for $(\delta_4,\delta_5)=(4,4)$  a $6\times 5$ matrix satisfying the first two conditions in the definition
of a $(3,m,\underline{\delta},\underline{\epsilon})$-level matrix
must have the following format:

	$$\eta=\left[\begin{matrix}
	a_{1,1}&a_{1,2}&a_{1,3}&0&0\\
	a_{2,1}&a_{2,2}&a_{2,3}&0&0\\
	a_{3,1}&a_{3,2}&a_{3,3}&0&0\\
	b_{1,1}&b_{1,2}&b_{1,3}&0&0\\
	b_{2,1}&b_{2,2}&b_{2,3}&b_{2,4}&b_{2,5}\\
	b_{3,1}&b_{3,2}&b_{3,3}&b_{3,4}&b_{3,5}\\
\end{matrix}\right]$$
But, for such a matrix $I_5(\eta)\subseteq(b_{2,4}b_{3,5}-b_{2,5}b_{3,4})$. Thus, there does not exist  $(3,m,\underline{\delta},\underline{\epsilon})$-level matrix with $(\delta_4,\delta_5)=(4,4)$.  As before, by \cite[Theorem 1.3]{BFRS}, there does not exist $3$-equigenerated ideal of codimension 2  with minimal graded free resolution \eqref{resm=5}  for which the shifts  afford the
vectors $(3,5,\underline{\delta},\underline{\epsilon})$ with  $(\delta_4,\delta_5)=(4,4)$. 

Therefore, the only possible vectors for a $3$-equigenerated ideal of codimension 2  with minimal graded free resolution \eqref{resm=5} are $\underline{\delta}=(3,3,3,3,3)$ and $\underline{\epsilon}=(1,1,1)$. For these vectors, $\deg(R/J)=3$ and, from the Bourbaki degree formula, ${\rm Bour}(J)=6.$
	\end{proof}

As we will show in
Section~\ref{sec:quartic-curves}, after classifying all triples coming from reduced quartic plane projective curves, all of the possible values can be achieved by gradient ideals, just as in the case of cubics. Hence, for $d \leq 3$, there is no integrable gap (see Table~\ref{table:gradient-triples-quartics}).


\section{The Bourbaki degree of quartic plane curves}\label{sec:quartic-curves}

Let $R=k[x,y,z]$, where $k$ is algebraically closed of characteristic zero, and let $X=V(F)\subseteq \PP^2_k$ be a reduced singular quartic. We assume throughout this section that $X$ is not a cone, equivalently that the three partial derivatives $F_x,F_y,F_z$ are $k$-linearly independent. Thus the gradient ideal $J_F=(F_x,F_y,F_z)$ is minimally generated by three cubic forms and has height $2$. The excluded cone case is the reduced quartic consisting of four concurrent lines; in that case the gradient ideal is generated by only two cubic forms, so it does not belong to the three-generated setting of this paper.

Since $F$ is homogeneous of degree $4$ and $\operatorname{char}k=0$, Euler's formula gives
\[
4F=xF_x+yF_y+zF_z.
\]
Let $p\in \Sing(X)$, and choose an affine chart, say $z\neq 0$, with local equation
$g(u,v)=F(u,v,1)$. Then $g_u=F_x(u,v,1)$ and $g_v=F_y(u,v,1)$, while Euler's formula gives
\[
F_z(u,v,1)=4g-u g_u-v g_v.
\]
Hence, in the local ring $\mathcal O_{\PP^2,p}$, one has
\[
(J_F)_p=(g,g_u,g_v).
\]
Therefore, the local length of the singular scheme defined by $J_F$ at $p$ is the local Tjurina number $\tau_p(X)$. Consequently,
\[
\deg(R/J_F)=\tau(X):=\sum_{p\in\Sing(X)}\tau_p(X).
\]

If $e:=\indeg(\syz(J_F))$, then the Bourbaki formula for ideals minimally generated by three cubic forms gives
\begin{equation}\label{eq:quartic-bour-formula}
\Bour(X):=\Bour(J_F)=9-3e+e^2-\tau(X).
\end{equation}

For $C\subseteq \PP^2$ a reduced plane curve of degree $d\geq 3$ and a singular point $p\in C$, let $\mu_p(C)$ denote the Milnor number and let $r_p(C)$ denote the number of analytic branches of $C$ at $p$. The $\delta$-invariant is given by
\[
\delta_p(C)=\frac{\mu_p(C)+r_p(C)-1}{2}.
\]
If $C$ is irreducible, then the genus formula (see, for example, \cite[Remark 2.1.11]{namba1984geometry}) gives
\begin{equation}\label{eq:genus-formula}
g(C)=\frac{(d-1)(d-2)}{2}-\sum_{p\in\Sing(C)}\delta_p(C)
\end{equation}
where $g(C)$ is the genus of the normalization of $C$, or the \emph{geometric genus}.

We now specialize to quartics. If $X\subseteq \PP^2$ is an irreducible quartic, then \eqref{eq:genus-formula} gives
\[
g(X)=3-\sum_{p\in\Sing(X)}\delta_p(X).
\]
Since $g(X)\geq 0$ and $\delta_p(X)\geq 1$ for every singular point, one has
\[
\sum_{p\in\Sing(X)}\delta_p(X)\leq 3.
\]
In particular, an irreducible singular quartic has at most three singular points. Moreover, among the simple singularities, only those with $\delta\leq 3$ can occur on an irreducible quartic. Thus, the possible local types are
\[
A_1,\ldots,A_6,\qquad D_4,\qquad D_5,\qquad E_6.
\]
The actual classification of the possible configurations, together with the relevant analytic restrictions and normal forms, is due to Wall; see \cite{Wall-quartics}. We shall use only the induced list of configurations and their numerical invariants. For related treatments of quartic strata and Jacobian ideals, see also \cite{Farrahy-NN,NN-Simis-Aluffi}.

For reducible quartics, the genus formula is less useful for our purpose. Instead, one proceeds geometrically from the possible decompositions:
\[
(3+1),\qquad (2+2),\qquad (2+1+1),\qquad (1+1+1+1),
\]
that is, a cubic plus a line, two conics, a conic plus two lines, or four lines. These cases give the remaining configurations, including the singularities $A_7$, $D_6$, and $E_7$, except for the cone case of four concurrent lines, which is excluded from our three-generated setting. Again, the corresponding normal forms can be found in \cite{Farrahy-NN}. In the present paper, we use only the resulting singularity configurations and their numerical invariants.

\begin{table}[H]
\caption{Numerical invariants of singularities occurring on reduced quartics.}
\centering
\renewcommand{\arraystretch}{1.15}
\begin{tabular}{|c|c|c|c|}
\hline
Type & Description & $\mu_p=\tau_p$ & $\delta_p$ \\
\hline
$A_1$ & node & $1$ & $1$ \\
$A_2$ & simple cusp & $2$ & $1$ \\
$A_3$ & tacnode & $3$ & $2$ \\
$A_4$ & ramphoid cusp & $4$ & $2$ \\
$A_5$ & oscnode & $5$ & $3$ \\
$A_6$ & higher cusp & $6$ & $3$ \\
$A_7$ & higher tacnode & $7$ & $4$ \\
$D_4$ & ordinary triple point & $4$ & $3$ \\
$D_5$ & tacnode cusp & $5$ & $3$ \\
$D_6$ & higher triple point & $6$ & $4$ \\
$E_6$ & cusp of multiplicity $3$ & $6$ & $3$ \\
$E_7$ & cusp with a tangent smooth branch & $7$ & $4$ \\
\hline
\end{tabular}
\label{tab:quartic-local-invariants}
\end{table}

All singularities listed in Table~\ref{tab:quartic-local-invariants} are weighted homogeneous. Hence, by Saito's criterion, the local Tjurina and Milnor numbers coincide. Thus, if the singularity configuration of $X$ is written as
\[
\Sigma_X=\sum_{i=1}^{7}\lambda_iA_i+\eta_4D_4+\eta_5D_5+\eta_6D_6+\theta_6E_6+\theta_7E_7,
\]
then
\begin{equation}\label{eq:quartic-tjurina-configuration}
\tau(X)=\sum_{i=1}^{7}i\lambda_i+4\eta_4+5\eta_5+6\eta_6+6\theta_6+7\theta_7.
\end{equation}

We shall also use the classification of three-generated cubic ideals from Section~\ref{subsec:ideals-cubics}. In the present setting, it implies the following restrictions on the initial degree $e=e(X)$:
\[
\begin{array}{|c|c|}
\hline
\tau(X) & e\\
\hline
\tau(X)\geq4 & e\leq2\\
\tau(X)=3 & e=2\text{ or }3\\
\tau(X)=2 & e=3\\
\tau(X)=1 & e=3\\
\hline
\end{array}
\]
Indeed, these are the restrictions supplied by the classification of possible pairs $(e,\deg(R/J_F))$ for three cubic generators, with $\deg(R/J_F)=\tau(X)$.

The following theorem characterizes the Bourbaki degree of a reduced singular quartic plane curve in terms of the configuration of its singularities.

\begin{Theorem}\label{Bour-Quartic}
Let $X=V(F)\subseteq \PP^2$ be a reduced singular quartic curve which is not a cone, and write its singularity configuration as
$$
\Sigma_X=\sum_{i=1}^{7}\lambda_iA_i+\eta_4D_4+\eta_5D_5+\eta_6D_6+\theta_6E_6+\theta_7E_7,
$$
where $\lambda_i,\eta_j,\theta_k\in\NN$. Then the Bourbaki degree
$\Bour(X)$ is characterized by Table~\ref{quartic-bour-table}, where
all coefficients not displayed in a given row are understood to be zero.
\end{Theorem}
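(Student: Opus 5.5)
The proof will be a case analysis on the value of $\tau(X)$, combining the Bourbaki formula \eqref{eq:quartic-bour-formula} with the restrictions on $e=\indeg(\syz(J_F))$ extracted from the classification of three-generated cubic ideals in Subsection~\ref{subsec:ideals-cubics}.

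\textbf{Step 1: fixing the possible resolution data.} Since $n=3$ and $J_F$ is not a complete intersection (it has height two), either $\syz(J_F)$ is free, or $J_F$ has homological dimension two. In the latter case the minimal resolution appears in the lists for $m=3,4,5$. Reading off these tables gives the admissible triples
\[
(\tau,e,\Bour)\in\{(6,1,1),(6,2,1),(5,2,2),(3,2,4),(4,2,3),(2,3,7),(1,3,8),(3,3,6)\}.
\]
In the free case $\Bour=0$ and $\tau=9-3e+e^2$ with $e\le 1$. Since $e\ge1$, this forces $(\tau,e)=(7,1)$. The case $e=0$ cannot occur because $X$ is not a cone, and $e=2$ would give $\tau=7$ with $e>\lfloor 3/2\rfloor$, which is impossible.

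These triples produce the table preceding the theorem, and the Bourbaki degree is a function of $\tau$ alone except when $\tau=3$:
\[
\tau\ge 7\Rightarrow \Bour=0,\quad \tau=6\Rightarrow 1,\quad \tau=5\Rightarrow 2,\quad \tau=4\Rightarrow 3,\quad \tau=2\Rightarrow 7,\quad \tau=1\Rightarrow 8.
\]
For $\tau=3$ we get $\Bour\in\{4,6\}$, according as $e=2$ or $e=3$. The value $5$ would require $(\tau,e)=(4,3)$, which never appears in the list.

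\textbf{Step 2: upper bound on $\tau$.} I must also check that $\tau(X)\le 7$ for every non-cone quartic. A value $\tau\ge 8$ would force the free case with $9-3e+e^2\ge 8$ and $e=1$, giving $\tau=7$, a contradiction. So $\tau\ge 8$ is impossible. This is consistent with du Plessis--Wall, which excludes $\tau=8$ for $d=3$.

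\textbf{Step 3: translating $\tau$ into configurations.} Using Wall's list~\cite{Wall-quartics} and the reducible decompositions $(3+1),(2+2),(2+1+1),(1+1+1+1)$, I will sort every configuration $\Sigma_X$ by $\tau(X)$ via \eqref{eq:quartic-tjurina-configuration}. Each row of Table~\ref{quartic-bour-table} then gets its value from Step~1.

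\textbf{Step 4: the case $\tau=3$ (main obstacle).} This is where $\tau$ alone does not determine $\Bour$. The configurations with $\tau=3$ are $3A_1$, $A_1+A_2$, and $A_3$. For each I have to decide between $e=2$ and $e=3$, i.e.\ whether a degree-two logarithmic derivation exists.

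I would use explicit normal forms and compute $\syz(J_F)_2$ directly. A cleaner criterion is probably via Proposition~\ref{prop:BourJ-max-res}: $\Bour=4=e^2$ holds if and only if $J_F$ has the resolution attached to $(2,3,3)$. Equivalently, $(f_1,f_2):f_3$ is a complete intersection of type $(2,3)$, which, roughly, means the three singular points impose a condition lying on a conic in a special way.

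I expect to find the following:
\begin{itemize}
\item For $A_1+A_2$, every representative has the same $e$, so its row is determined.
\item For $3A_1$ and $A_3$, both $e=2$ and $e=3$ occur, depending on finer geometry. For $3A_1$ this is irreducible versus reducible (e.g.\ a conic plus two lines, or a cubic plus a line). For $A_3$ it depends on whether the curve is two conics or irreducible.
\end{itemize}
Each of these requires exhibiting explicit quartics and a direct syzygy computation, and that is the laborious part of the proof.
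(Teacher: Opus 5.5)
Steps 1--3 match the paper's argument. Reading $(\tau,e,\Bour)$ off the $m=3,4,5$ classifications and the free case gives $\Bour(X)=7-\tau(X)$ for $\tau\ge4$, $\Bour=7,8$ for $\tau=2,1$, $\Bour\in\{4,6\}$ for $\tau=3$, and rules out $5$.

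\textbf{The gap: the row $A_1+A_2$.} The theorem asserts that \emph{every} quartic with this configuration has $\Bour(X)=6$, i.e.\ $e=3$. You only say you "expect" $e$ to be constant there, without giving the value or a reason. Syzygy computations on explicit normal forms cannot settle this: quartics with $A_1+A_2$ form a family with three moduli, so you need a uniform criterion.

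Your proposed criterion does not supply one. The reformulation via Proposition~\ref{prop:BourJ-max-res} is correct, but it merely restates $e=2$. Your geometric reading of it ("three points on a conic in a special way") cannot discriminate anything, since every length-three scheme in $\PP^2$ lies on a vector space of conics of dimension at least three.

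\textbf{The missing idea, as in the paper.} For $\tau(X)=3$, Dimca's syzygy-defect formula gives $\dim_k\syz(J_F)_2=\dim_k (I_Z)_1$, where $I_Z=J_F^{\rm sat}$. So $e=2$ if and only if the length-three singular scheme $Z$ lies on a line. Now:
\begin{enumerate}
\item[(1)] A quartic with configuration $A_1+A_2$ is irreducible. The cusp would have to lie on a cuspidal cubic component, and adding a line to a cuspidal cubic never produces exactly one extra node.
\item[(2)] A line containing $Z$ would pass through the node and be the cuspidal tangent. It would then meet $X$ with multiplicity at least $2+3=5>4$, contradicting B\'ezout.
\end{enumerate}
Hence $e=3$ and $\Bour(X)=6$.

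\textbf{Your guesses for the other two rows.} The same criterion shows they need correcting, which matters if you go looking for examples.
\begin{enumerate}
\item[(1)] $3A_1$: three nodes of an irreducible quartic are never collinear, so $e=2$ occurs exactly for a smooth cubic plus a transverse line. A conic plus two lines never has configuration $3A_1$.
\item[(2)] $A_3$: a quartic with configuration exactly $A_3$ is necessarily irreducible; two conics cannot produce it. Both values already occur among irreducible curves, e.g.\ $x^4+y^4+y^2z^2$ (with $e=2$) and $x^4+y^4+y^2z^2+x^2yz$ (with $e=3$).
\end{enumerate}
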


{\small 
\begin{table}[htp]
\caption{Characterization of $\Bour(X)$.}
\centering
\renewcommand{\arraystretch}{1.15}
\begin{tabular}{|c|c|c|}
\hline
$\Bour(X)$ &  coefficients  of $\Sigma_X$ & Singularity configuration \\ 
\hline
\multirow{6}{*}{$\Bour(X)=0$}
& $(\lambda_2,\lambda_5)=(1,1)$ & $A_2+A_5$ \\
& $\lambda_7=1$ & $A_7$ \\
& $(\lambda_1,\lambda_3)=(1,2)$ & $A_1+2A_3$ \\
& $(\lambda_1,\eta_6)=(1,1)$ & $A_1+D_6$ \\
& $(\lambda_1,\eta_4)=(3,1)$ & $3A_1+D_4$ \\
& $\theta_7=1$ & $E_7$ \\
\hline
\multirow{12}{*}{$\Bour(X)=1$}
& $\lambda_2=3$ & $3A_2$ \\
& $(\lambda_2,\lambda_4)=(1,1)$ & $A_2+A_4$ \\
& $\lambda_6=1$ & $A_6$ \\
& $\theta_6=1$ & $E_6$ \\
& $(\lambda_1,\lambda_5)=(1,1)$ & $A_1+A_5$ \\
& $(\lambda_1,\lambda_2,\lambda_3)=(1,1,1)$ & $A_1+A_2+A_3$ \\
& $(\lambda_1,\eta_5)=(1,1)$ & $A_1+D_5$ \\
& $\eta_6=1$ & $D_6$ \\
& $\lambda_3=2$ & $2A_3$ \\
& $(\lambda_1,\lambda_3)=(3,1)$ & $3A_1+A_3$ \\
& $(\lambda_1,\eta_4)=(2,1)$ & $2A_1+D_4$ \\
& $\lambda_1=6$ & $6A_1$ \\
\hline
\multirow{9}{*}{$\Bour(X)=2$}
& $(\lambda_1,\lambda_2)=(1,2)$ & $A_1+2A_2$ \\
& $(\lambda_2,\lambda_3)=(1,1)$ & $A_2+A_3$ \\
& $(\lambda_1,\lambda_4)=(1,1)$ & $A_1+A_4$ \\
& $\lambda_5=1$ & $A_5$ \\
& $\eta_5=1$ & $D_5$ \\
& $(\lambda_1,\lambda_3)=(2,1)$ & $2A_1+A_3$ \\
& $(\lambda_1,\eta_4)=(1,1)$ & $A_1+D_4$ \\
& $(\lambda_1,\lambda_2)=(3,1)$ & $3A_1+A_2$ \\
& $\lambda_1=5$ & $5A_1$ \\
\hline
\multirow{6}{*}{$\Bour(X)=3$}
& $(\lambda_1,\lambda_2)=(2,1)$ & $2A_1+A_2$ \\
& $(\lambda_1,\lambda_3)=(1,1)$ & $A_1+A_3$ \\
& $\eta_4=1$ & $D_4$ \\
& $\lambda_2=2$ & $2A_2$ \\
& $\lambda_4=1$ & $A_4$ \\
& $\lambda_1=4$ & $4A_1$ \\
\hline
\multirow{2}{*}{$\Bour(X)=4$ or $\Bour(X)=6$}
& $\lambda_1=3$ & $3A_1$ \\
& $\lambda_3=1$ & $A_3$ \\
\hline
$\Bour(X)=6$
& $(\lambda_1,\lambda_2)=(1,1)$ & $A_1+A_2$ \\
\hline
\multirow{2}{*}{$\Bour(X)=7$}
& $\lambda_1=2$ & $2A_1$ \\
& $\lambda_2=1$ & $A_2$ \\
\hline
$\Bour(X)=8$ & $\lambda_1=1$ & $A_1$ \\
\hline
\multicolumn{3}{|c|}{$\Bour(X)=5$ never occurs for a reduced singular quartic curve.}\\
\hline
\end{tabular}
\label{quartic-bour-table}
\end{table}}
\begin{proof}
Since $J_F$ is generated by three cubic forms and $\deg(R/J_F)=\tau(X)$, formula~\eqref{eq:quartic-bour-formula} gives
\[
\Bour(X)=9-3e+e^2-\tau(X).
\]
Moreover, by \eqref{eq:quartic-tjurina-configuration}, the Tjurina number is read directly from the singularity configuration.

We distinguish cases according to $\tau(X)$. If $\tau(X)\geq 4$, then by the cubic classification recalled above one has $e\leq 2$. Since the expression $9-3e+e^2$ has the same value $7$ for $e=1$ and $e=2$, we get
\[
\Bour(X)=7-\tau(X).
\]
Thus the configurations with $\tau(X)=7,6,5,4$ give $\Bour(X)=0,1,2,3$, respectively. These are precisely the first four blocks in Table~\ref{quartic-bour-table}.

If $\tau(X)=3$, then the possible configurations are
\[
3A_1,\qquad A_1+A_2,\qquad A_3.
\]
By the cubic classification recalled above, one has $e=2$ or $e=3$.
For the configurations $3A_1$ and $A_3$, both possibilities can occur,
and hence \eqref{eq:quartic-bour-formula} gives
\[
\Bour(X)=
\begin{cases}
4, & \text{if } e=2,\\
6, & \text{if } e=3.
\end{cases}
\]

For the configuration $A_1+A_2$, we claim that necessarily $e=3$. The syzygy-defect formula from \cite[Theorem 1]{dimca2013syzygies} applied for $\tau(X) = 3$ gives
$$
\dim_k \syz(J_F)_2 = 3 - \dim_k(R_1/(I_Z)_1) = \dim_k (I_Z)_1,
$$
where $I_Z = J_F^{\text{sat}}$, and thus $e(X) = 2$ if and only if the singular scheme $Z = \Sing(X)$ is contained scheme-theoretically on a line. 

To see this is impossible for $A_1 + A_2$ singularities, we first note that these must be irreducible. Otherwise, a reduced quartic retaining an ordinary cusp would consist of a cuspidal cubic and a line avoiding the cusp, whose intersections cannot produce just one node. Now, for an irreducible quartic of singularity $A_1 + A_2$, if a line contained the singular scheme, this line would pass through the node and be tangent to the cusp, so its intersection multiplicities would be at least $2 + 3 = 5$, contradicting Bézout. Thus $e(X) = 3$ always for $A_1 + A_2$
consequently $\Bour(X)=6$ for every quartic with singularity configuration $A_1+A_2$.

If $\tau(X)=2$, then the only possible configurations are
\[
2A_1,\qquad A_2.
\]
Here $e=3$, and therefore \eqref{eq:quartic-bour-formula} gives $\Bour(X)=7$.

Finally, if $\tau(X)=1$, then $X$ has a unique node, so the configuration is $A_1$. Again $e=3$, and hence $\Bour(X)=8$.

The value $\Bour(X)=5$ cannot occur. Indeed, since
$\Bour(X)\leq e^2$ and $e\leq3$, this value would force $e=3$.
Formula~\eqref{eq:quartic-bour-formula} would then give
$\tau(X)=4$, contradicting the cubic classification above, which gives
$e\leq2$ whenever $\tau(X)\geq4$. This completes the proof.
\end{proof}
\begin{Remark}\label{rmk:ambiguity-sing-type}
The theorem shows that, except for the configurations \(
3A_1\) and \(A_3\), the Bourbaki degree is determined by the singularity configuration. Another case where the singularity configuration does not necessarily determine the initial degree $e(X)$ is for type $E_6$, where $e(X) \leq 2$, but both possibilities correspond to the same value for the Bourbaki degree. The following table attests to the existence of such cases:
\begin{table}[H]
\centering
\renewcommand{\arraystretch}{1.15}
\begin{tabular}{|l|c|c|}
\hline
Quartic $f$ & Singularity type & $e(X)$ \\ \hline
$z(x^3+y^3+z^3)$
& $3A_1$ & $2$ \\ \hline

$x^2y^2+y^2z^2+z^2x^2$
& $3A_1$ & $3$ \\ \hline

$x^4+y^4+y^2z^2$
& $A_3$ & $2$ \\ \hline

$x^4+y^4+y^2z^2+x^2yz$
& $A_3$ & $3$ \\ \hline

$x^4+y^3z$
& $E_6$ & $1$ \\ \hline

$x^4+y^3z+x^2y^2$
& $E_6$ & $2$ \\ \hline
\end{tabular}
\end{table}
For $3A_1$ and $A_3$, the two possibilities occur
\[
\Bour(X)=4 \iff e(X)=2,
\qquad
\Bour(X)=6 \iff e(X)=3,
\]
and this is the only ambiguity in $\Bour(X)$ for a fixed singularity configuration for reduced projective quartic plane curves.
\end{Remark}

\begin{Remark}
The missing value $\Bour(X)=5$ is the quartic manifestation of the
low-degree gap for three-generated cubic ideals. It is not caused by the
absence of singularity configurations with $\tau=4$; rather, it follows
from the impossibility of the boundary pair $(\tau,e)=(4,3)$.
\end{Remark}

Comparing with Subsection~\ref{subsec:ideals-cubics}, we obtain the
following table, showing that every possible resolution occurs for a
gradient ideal of a quartic. For completeness, we also include the
nonsingular case.

\begin{table}[htp]
\centering
\small
\caption{Gradient ideals of plane projective quartics and resolutions}\label{table:gradient-triples-quartics}
\resizebox{\textwidth}{!}{%
\begin{tabular}{|l|l|l|l|l|l|l|}
\hline
Quartics          & Tjurina & Bourbaki & indeg & m & $\underline{\delta}$ & $\underline{\epsilon}$ \\ \hline
non-singular         & 0       & 9               & 3              & 3 & (3,3,3)                                            & (3)\\ \hline
$A_1$ (nodal)              & 1       & 8               & 3              & 4 & (3,3,3,4)                                          & (2,1)\\ \hline
$2A_1, A_2$     & 2       & 7               & 3              & 4 & (3,3,3,3)                                            & (1,2)\\ \hline
$3A_1, 3A_1^{(3+1)}, A_3^{\text{irr}}$         & 3       & 4               & 2              & 3 & (2,3,3)                                            & (2)\\ \hline
$3A_1^{\text{irr}}, A_1 + A_2, A_3$ & 3       & 6               & 3              & 5 & $(3,3,3,3,3)$  & (1,1,1)                                                 \\ \hline
$2A_1 + A_2, A_1+A_3, D_4, 2A_2, A_4, 4A_1$      & 4       & 3               & 2              & 4 & $(2,3,3,3)$ & (1,1)    \\ \hline
$A_1+2A_2, A_2+A_3, A_1+A_4, A_5, D_5, \ldots$      & 5       & 2               & 2              & 3 & $(2,2,3)$ & (1)    \\ \hline
$E_6$      & 6       & 1               & 1              & 3 & $(1,3,3)$ & (1)    \\ \hline
$A_6, 3A_2, E_6, 6A_1, A_1 + A_5, A_2 + A_4, \ldots$      & 6       & 1               & 2              & 3 & $(2,2,2)$ & (1)    \\ \hline
$E_7, A_7, A_2+A_5, A_1 + 2A_3, A_1+D_6,3A_1+D_4$      & 7       & 0               & 1              & 2 & $(1,2)$ & ---\\ \hline
\end{tabular}%
}
\end{table}


\begin{thebibliography}{99}

\bibitem{bruns1998cohen}
W.~Bruns and H.~J.~Herzog,
\emph{Cohen--Macaulay Rings},
revised ed., Cambridge Studies in Advanced Mathematics, vol.~39,
Cambridge University Press, Cambridge, 1998.


\bibitem{BFRS}
R. Burity, T. Fiel, Z. Ramos and A. Simis,
\emph{The structure of almost Cohen-Macaulay 3-generated ideals of codimension 2 in terms of matrix theory},
J. Algebra, v. 713, 2027, 491-514.

\bibitem{CTC-Plessis}
A. A. du Plessis and C. T. C. Wall,
\emph{Application of the theory of the discriminant to highly singular plane curves},
Math. Proc. Cambridge Philos. Soc.
126 (1999), 259--266.

\bibitem{dimca2013syzygies}
A.~Dimca,
\emph{Syzygies of Jacobian ideals and defects of linear systems},
Bull. Math. Soc. Sci. Math. Roumanie
56(104) (2013), no.~2, 191--203.

\bibitem{dimca2017exponents}
A.~Dimca and G.~Sticlaru,
\emph{On the exponents of free and nearly free projective plane curves},
Rev. Mat. Complut. 30 (2017), no.~2, 259--268.

\bibitem{Farrahy-NN}
A. B. Farrahy and A. Nasrollah Nejad,
\emph{Hypersurfaces with linear type singular loci},
J. Algebra Appl. 19 (2020), no. 9, 2050169.

\bibitem{Futata2023}
D. I. Futata,
\emph{Exploring N-freeness and numerical invariants of logarithmic tangent sheaves in reduced hypersurfaces: a stratified approach with algorithmic implementation},
Ph.D. thesis,
Universidade Estadual de Campinas,
2023.

\bibitem{MFA}
M. Jardim, F. Monteiro and A. Nasrollah Nejad,
\emph{The Bourbaki degree of the syzygy module of $2\times 4$ matrices},
\href{https://arxiv.org/abs/2604.04252}{arXiv:2604.04252}, 2026.

\bibitem{MAA}
M. Jardim, A. Nasrollah Nejad and A. Simis,
\emph{The Bourbaki degree of a plane projective curve},
Trans. Amer. Math. Soc. 377 (2024), no. 11, 7633--7655.

\bibitem{namba1984geometry}
M.~Namba,
\emph{Geometry of Projective Algebraic Curves},
Monographs and Textbooks in Pure and Applied Mathematics, vol.~88,
Marcel Dekker, New York, 1984.

\bibitem{NN-Simis-Aluffi}
A. Nasrollah Nejad and A. Simis,
\emph{The Aluffi algebra},
J. Singul. 3 (2011), 20--47.

\bibitem{Wall-quartics}
C. T. C. Wall,
\emph{Geometry of quartic curves},
Math. Proc. Cambridge Philos. Soc. 117 (1995), no. 3, 415--423.

\end{thebibliography}
\end{document}